%

\documentclass[11pt,preprint]{imsart}
\RequirePackage[OT1]{fontenc}
\RequirePackage{amsthm,amsmath}
\usepackage{stmaryrd}
\usepackage{graphicx,amsmath,amsthm,amsfonts,amssymb}
\usepackage{booktabs, rotating, float,pifont}
\usepackage[hmargin=3cm,vmargin=2.4cm]{geometry}
\usepackage{mathtools}
\usepackage{bbm}
\mathtoolsset{showonlyrefs}
\usepackage{enumerate}
\usepackage{enumitem}
\usepackage{color}
\usepackage{listings}
\usepackage{epstopdf}
\usepackage{epsfig}
\usepackage{subfigure}
\usepackage{makecell}
\usepackage{tikz}
\usetikzlibrary{decorations.pathreplacing}
\usepackage{array}
\usepackage{caption}
\usepackage{soul}
\usepackage{hyperref}
\usepackage{verbatim}
\usepackage{siunitx}
\usetikzlibrary{calc} \tikzset{>=latex}
\usetikzlibrary{calc,decorations.markings}
\usetikzlibrary{positioning,shapes,decorations.text}
\theoremstyle{plain}
\newtheorem{thm}{Theorem}[section]
\newtheorem{cor}[thm]{Corollary}

\newtheorem{prop}[thm]{Proposition}
\newtheorem{lemma}[thm]{Lemma}
\newtheorem{assumption}[thm]{Assumption}
\theoremstyle{definition}
\newtheorem{defn}[thm]{Definition}
\newtheorem{rem}[thm]{Remark}
\newtheorem{eg}[thm]{Example}
\newtheorem{fact}[thm]{Fact}
\newtheorem{observe}[thm]{Observation}

\numberwithin{equation}{section}

\setcounter{tocdepth}{1}

\newcommand{\rpm}{\sbox0{$1$}\sbox2{$\scriptstyle\pm$}
  \raise\dimexpr(\ht0-\ht2)/2\relax\box2 }


\tikzstyle{nd} = [anchor=base, inner sep=0pt]
\tikzstyle{ndpic} = [remember picture, baseline, every node/.style={nd}]

\def\beq{\begin{equation}}
\def\eeq{\end{equation}}
\def\ba{\begin{enumerate}[(a)]}
\def\bei{\begin{enumerate}[(i)]}
\def\be{\begin{enumerate}[(1)]}
\def\ee{\end{enumerate}}
\def\bi{\begin{itemize}}
\def\ei{\end{itemize}}
\def\beg{\begin{eg}}
\def\eeg{\end{eg}}
\def\bd{\begin{defn}}
\def\ed{\end{defn}}
\def\bt{\begin{thm}}
\def\et{\end{thm}}
\def\bl{\begin{lemma}}
\def\el{\end{lemma}}
\def\bfac{\begin{fact}}
\def\efac{\end{fact}}

\def\bc{\begin{cor}}
\def\ec{\end{cor}}
\def\bp{\begin{prop}}
\def\ep{\end{prop}}
\def\bo{\begin{observe}}
\def\eo{\end{observe}}
\def\bas{\begin{assumption}}
\def\eas{\end{assumption}}

\def\ZZ{\mathbb{Z}}

\def\beg{\begin{eg}}
\def\eeg{\end{eg}}

\def\Tau{\mathcal{T}}

\def\type{{\textrm{type}}}

\def\ua{{\Uparrow}}

\def\sh{{\textrm{sh}}}
\newcommand{\cross}{\begin{picture}(12,10)(-2,0)
\put(-8,0){\Large$\longrightarrow$}
\put(0,0){\Large$\big\downarrow$}
\end{picture}}

\def\ddd{\displaystyle}

\makeindex

\numberwithin{equation}{section}
\numberwithin{table}{section}



\startlocaldefs
\endlocaldefs

\begin{document}

\begin{frontmatter}

\title{Invariance of polymer partition functions under the geometric RSK correspondence}
\runtitle{Invariance of polymer partition functions under the geometric RSK correspondence}
\runauthor{Corwin}

\begin{aug}
 \author{\fnms{Ivan}  \snm{Corwin}\corref{}\ead[label=e1]{corwin@math.columbia.edu}}
%
\address{Department of Mathematics, 2990 Broadway, New York, NY 10027, USA\\ \printead{e1}}




\end{aug}

\begin{abstract}
 We prove that the values of discrete directed polymer partition functions involving multiple non-intersecting paths remain invariant under replacing the background weights by their images under the geometric RSK correspondence. This result is inspired by a recent and remarkable identity proved by Dauvergne, Orthmann and Vir\'ag which is recovered as the zero-temperature, semi-discrete limit of our main result.
\end{abstract}



\begin{keyword}
\kwd{Directed polymers, RSK correspondence, Kardar-Parisi-Zhang}
\end{keyword}

\end{frontmatter}


\section{Introduction}

The {\it Robinson-Schensted-Knuth (RSK) correspondence} \cite{Rob,Schen,Knuth} plays an central role in combinatorics and  symmetric function theory \cite{Stanley,Sagan}. In the past twenty years, starting with  work of \cite{Kurt00,BDJ}, it has also taken on a key role in the study of certain models in integrable probability such as $(1+1)$-dimensional {\it last passage percolation (LPP)}. Greene's theorem \cite{Greene} provides the link between the RSK correspondence and LPP. The push-forward under the RSK correspondence of an array of independent and identically distributed geometric random variables produces a measure on pairs of semi-standard Young tableaux which can be described in terms of Schur symmetric polynomials.

This connection between geometric weight LPP and the {\it Schur process/measure} \cite{Okounkov2001,OkRes} combined with the {\it determinantal} structure behind these measures serves as a starting point for computing asymptotics. Greene's theorem implies that the last passage times from the origin to various points along down-right (i.e., {\it space-like}) paths can be read-off from directly from the output of the RSK correspondence. In the geometric weight setting, the asymptotic joint distribution of these last passage times are described by the {\it Airy$_2$ process} \cite{Prahofer2002,JohPNG}. The Airy$_2$ process is the top curve of the Airy line ensemble \cite{CH14} which records the limit of the entire Schur process. The lower curves of the Airy line ensemble relate to limits of last passage times for multipaths.

These asymptotics fit into the study of the {\it Kardar-Parisi-Zhang (KPZ) universality class} \cite{Corwin12,Quastel12,QS15}. Based on the aforementioned LPP results, the Airy$_2$ process has been understood to be the universal process which governs the fixed starting point and spatially varying ending point distribution of the KPZ universality class. The one-parameter Airy$_2$ process was conjectured in \cite{Corwin2015} to be a marginal of similarly conjectural two-parameter process termed the {\it Airy sheet} which should describe the limiting joint law of LPP (and other KPZ class model) fluctuations under spatially varying both the starting and ending points. Given the Airy sheet, one can construct the full KPZ fixed point (i.e., the universal space-time limit of models in the KPZ universality class) -- see also \cite{KPZfixed}.

\cite[Theorem 1.3]{DOV} constructs the {\it Airy sheet} from the {\it Airy line ensemble}. Instead of studying geometric weight LPP, \cite{DOV} focuses on a semi-discrete limit known as Brownian LPP. The starting point for the work of \cite{DOV} is a remarkable generalization of Greene's theorem in the semi-discrete setting which shows how the collection of last passage times from spatially varying starting and endpoint points is encoded simply from the (semi-discrete) RSK correspondence output. Their result, \cite[Proposition 4.1]{DOV}, is Corollary \ref{cor:sdzero} in this text.

\smallskip
Our present paper addresses the question of whether \cite[Proposition 4.1]{DOV} generalize to the {\it discrete} RSK correspondence and to the {\it geometric} RSK correspondence?
Theorem \ref{thm:dinv} answers both of these questions in the affirmative by providing a generalization of \cite[Proposition 4.1]{DOV} for the discrete geometric RSK correspondence.

\smallskip
The {\it geometric RSK (gRSK) correspondence} \cite{Kir,NY, ON12, COSZ,OSZ} (also sometimes called the {\it tropical RSK correspondence}) is the image of the usual RSK correspondence under replacement of the $(\max,+)$ semi-ring by the $(+,\times)$ semi-ring. It is important to note that the word {\it geometric} has been used in two ways so far in this introduction -- in reference to geometric random variables in LPP and in reference to this geometric lifting of the RSK correspondence. We will generally use {\it geometric} in the second sense in this paper.

Greene's theorem generalizes to the gRSK correspondence and provides a relationship to directed polymer partition functions (instead of LPP as for usual RSK). The special geometric random variables which, under RSK, produced the Schur process, have an analog for gRSK too -- the image of inverse-gamma random variables under the gRSK correspondence maps to the {\it Whittaker process/measure} \cite{ON12,COSZ,OSZ}.

Our main result is a generalization of \cite[Proposition 4.1]{DOV} to the discrete geometric RSK correspondence. We briefly explain this here, leaving precise definitions to the main text.

\smallskip
Consider  any $n\in \mathbb{Z}_{\geq 2}$, $k\in \mathbb{Z}_{\geq 1}$, $U=\big((u_1,n),\ldots, (u_k,n)\big)$ and $V=\big((v_1,1),\ldots, (v_k,1)\big)$ with $u_1< \cdots < u_k\in \mathbb{Z}_{\geq 1}$ and $v_1<\cdots<x v_k\in \mathbb{Z}_{\geq 1}$. Define $U\to V$ to be the set of  multipaths $\pi=(\pi_1,\ldots,\pi_k)$ where each $\pi_i$ is a directed lattice path starting at $(u_i,n)$ and ending at $(v_i,1)$, and the $\pi_i$ are pairwise non-intersecting. Assume that $U\to V$ is non-empty. The left-side of Figure \ref{fig:Uhat} illustrates a choice of $U$, $V$ and $\pi\in U\to V$ when $n=5$ and $k=3$.

\begin{figure}[t]
  \captionsetup{width=.8\linewidth}
  \includegraphics[width=6in]{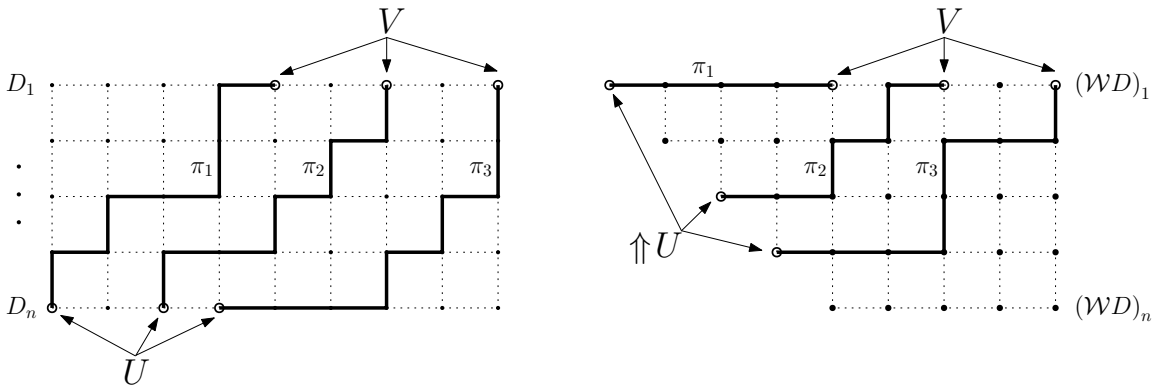}
  \caption{An illustration of many of the terms used to formulate our main result, Theorem \ref{thm:dinv} when $n=5$ and $k=3$. The mapping from $U$ to $\ua U$ as defined in Theorem \ref{thm:dinv}. The circles correspond to the $U$ and $V$ on the left or $\ua U$ and $V$ on the right, and $\pi_1,\pi_2$ and $\pi_3$ represent possible non-intersecting paths which go from $U$ to $V$ on the left, or $\ua U$ to $V$ on the right.  Also illustrated is a portion (left-justified) of the domain of $D$ on the left and the domain of $\mathcal{W}D$ on the right. To the $i^{th}$ row, we associate the function $D_i$ on the left and $\big(\mathcal{W}D\big)_i$ on the right.}
  \label{fig:Uhat}
\end{figure}

Consider any  functions $D_1,\ldots, D_n:\mathbb{Z}_{\geq 1}\to (0,\infty)$ with the convention that $D_i(0)\equiv 1$, and write $D=(D_1,\ldots, D_n)$. Treating a multipath $\pi$ as the union of all points along the various $\pi_i$, we define the partition function from $U$ to $V$ with respect to $D$ to be
$$
D\big[U\to V\big] = \sum_{\pi\in U\to V} \prod_{(x,m)\in \pi} d_{x,m}\qquad \textrm{where}\quad d_{x,m}=\frac{D_m(x)}{D_m(x-1)}.
$$

For $1\leq i\leq n$ define  $(\mathcal{W}D)_i:\mathbb{Z}_{\geq i}\to (0,\infty)$ so that for all $N\in \mathbb{Z}_{\geq 1}$ and  $1\leq \ell\leq n\wedge N$,
$$
\prod_{r=1}^\ell \big(\mathcal{W}D\big)_r(N) = D\Big[ \big(\llbracket 1,\ell\rrbracket,n\big)\to \big(\llbracket N-\ell+1,N\rrbracket,1\big)\Big],
$$
where $\big(\llbracket a,b\rrbracket,m\big) := \big((a,m),\ldots, (b,m)\big)$. Write $\mathcal{W}D = \big((\mathcal{W}D)_1,\ldots, (\mathcal{W}D)_1\big)$. (Note that in Section  \ref{sec:dpart} we define $\mathcal{W}$ in a different manner via discrete geometric Pitman transforms and it is not until Corollary \ref{cor:Wpart} that we relate $\mathcal{W}$ to the above right-hand side.)

Finally, define $\ua U$ as in Figure \ref{fig:Uhat} by lifting up the points in $U$ until they are in the domain of definition for the $\mathcal{W}D$. We may now state our main theorem.

\medskip
\begin{thm}[Theorem \ref{thm:dinv}]
$D\big[U\to V\big] = \big(\mathcal{W}D\big)\big[\ua U\to V\big]$.
\end{thm}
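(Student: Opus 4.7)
My strategy would be to prove the theorem via a sequence of elementary local moves on the weight array $D$, namely the discrete geometric Pitman transforms that, according to Section~\ref{sec:dpart}, are the building blocks of $\mathcal{W}$. Since $\mathcal{W}$ is assembled (as gRSK typically is) by composing Pitman transforms $\mathcal{P}_i$ acting on pairs of consecutive rows $(D_i, D_{i+1})$ in a prescribed braid-like/insertion-sort order, it suffices to (a) establish a partition-function invariance under a single $\mathcal{P}_i$ and (b) chain these invariances together, tracking how the starting configuration must be successively lifted.

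The heart of the argument is thus a two-row statement: if $\mathcal{P}_i$ denotes the elementary Pitman transform applied to rows $i$ and $i+1$, then $D[U \to V] = \mathcal{P}_i(D)[U' \to V]$, where $U'$ is an appropriately lifted starting configuration. To prove this I would decompose each multipath $\pi = (\pi_1, \ldots, \pi_k)$ according to the $x$-coordinates at which each $\pi_j$ enters row $i+1$ and at which it exits into row $i$, isolating the portion of the weight product lying on these two rows from the rest. Summing over these decompositions before and after applying $\mathcal{P}_i$ reduces the identity to a combinatorial/algebraic check involving only the explicit formula for the geometric Pitman transform. In the single-path case ($k=1$) this is essentially the defining weight-preservation property of $\mathcal{P}_i$; for $k \geq 2$ non-intersecting paths it becomes a geometric/tropical analog of an LGV identity on the two-row slab, and I would prove it by induction on $k$, peeling off the rightmost path whose interaction with the two affected rows is the simplest to analyze.

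With the single-move invariance in hand, the full theorem follows by iterating over the sequence of Pitman transforms comprising $\mathcal{W}$. At each application of $\mathcal{P}_i$, any path starting point at row $n$ whose $x$-coordinate falls below the domain of the newly transformed row would have to be lifted upward to remain admissible; tracking these cumulative lifts over all the Pitman transforms precisely recovers the rule $\ua u_j = \max(u_j, j)$, which is exactly the constraint imposed by $(\mathcal{W}D)_j$ having domain $\mathbb{Z}_{\geq j}$. As a sanity check, specializing $U$ and $V$ to flat intervals recovers the identity $\prod_r (\mathcal{W}D)_r(N) = D[\bigl(\llbracket 1,\ell\rrbracket,n\bigr) \to \bigl(\llbracket N-\ell+1,N\rrbracket,1\bigr)]$ stated before Corollary~\ref{cor:Wpart}.

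The principal obstacle is the two-row identity at the heart of step (a). Although the algebraic formula for the geometric Pitman transform is explicit, the invariance of the \emph{non-intersecting} multipath sum for arbitrary $k$ and arbitrary intra-slab starting/ending configurations is not immediate from the single-path case, and the inductive peeling argument requires careful handling of how the rightmost path's trajectory through rows $i$ and $i+1$ interacts with the remaining $k-1$ paths. A secondary, more technical obstacle is the bookkeeping required to verify that the cumulative lifts aggregate to exactly $\ua U$ rather than some over- or under-shifted configuration; this amounts to matching the order in which the Pitman transforms are composed against the domain constraints of the intermediate weights.
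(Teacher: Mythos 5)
Your plan follows essentially the same route as the paper: its proof likewise reduces everything to a single two-row Pitman move (Step 1 for $k=1$, Step 2 for general $k$) and then chains the $\binom{n}{2}$ transforms making up $\mathcal{W}$ via a decomposition over the entry/exit configurations on the two affected rows (Lemma \ref{lem:inductive}), tracking the lifts with the operators $\ua_{r,m}$. The only internal differences are minor: the paper settles the two-row multipath case not by induction on $k$ but by noting that non-intersection on a two-row slab forces the paths into disjoint ``type'' intervals, so the partition function factorizes into single-path pieces; the $k=1$ case is not a defining property of the transform but an explicit summation identity \eqref{eq:exactdiscide} verified by induction; and the lift rule sends $(u_i,n)$ to level $n\wedge u_i$ rather than the formula you wrote, with the domain-shift bookkeeping you call secondary being, per Remark \ref{rem:newchallenges}, the main new difficulty.
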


\medskip
Section \ref{sec:dRSK} shows that $D\mapsto \mathcal{W}D$ is half of the gRSK correspondence. {\bf  Thus, up to the action of the $\ua$ operator on $U$, the partition functions for multipaths are invariant under the gRSK correspondence}. Remark \ref{rem:newchallenges} describes the new complexities encountered in formulating and proving this theorem, most of which come from the discrete setup.

\medskip
After initially posting this paper we learned of three alternative proofs of our main result, Theorem \ref{thm:dinv}. The first is essentially already present in the work of Noumi and Yamada \cite{NY}. We summarize this in Section \ref{sec:ny}. The second is due to Konstantin Matveev and relies on the Desnanot-Jacobi identity for matrix minors. The proof was proposed after the author gave a talk on this subject in the NYC Integrable Probability Seminar. With Matveev's permission, we record this in Section \ref{sec:dj}. The third will appear in forthcoming work of Duncan Dauvergne \cite{DD} (which also contains some extensions of the results of \cite{BGW}) and relies on other manipulations involving matrix minors.

None of these three proofs were apparent (or have particularly clear analogs) in the  zero-temperature semi-discrete setting as in \cite{DOV}. By lifting to the geometric and discrete setting, we gain new tools, namely we can relate partition functions to matrix minors, and then make use of various manipulations based on that. In the zero-temperature limit (which relates to the usual RSK correspondence) one loses tools like Lindstr\"{o}m-Gessel-Viennot, and in the semi-discrete limit, one loses the matrices entirely. That is not to say that these other proofs cannot be implemented in the various degenerations, and figuring out how that works seems like a valuable problem to consider.

\smallskip
\noindent {\it Outline.}
Section \ref{sec:d} focuses on discrete polymers. The pertinent definitions and Theorem \ref{thm:dinv} are given in Section \ref{sec:dpart}, along with the proof of the theorem. Section \ref{sec:dRSK} relates the operator $\mathcal{W}$ and the discrete geometric analogs of the Pitman transform to the gRSK correspondence and row insertion. Section \ref{sec:dzero} contains the zero-temperature limit of Theorem \ref{thm:dinv}. This relates to the usual RSK correspondence and last passage percolation. Section \ref{sec:sd} contains analogous results for the semi-discrete version of the gRSK correspondence. In that setting, the proof Theorem \ref{thm:sdinv} (which is the semi-discrete analog of Theorem \ref{thm:dinv}) is considerably simpler. A reader may benefit from browsing that proof first before the proof of Theorem \ref{thm:dinv}. Section \ref{sec:ques} closes with three brief remarks/questions. The first relates to braid relations for the discrete (geometric) Pitman transforms, the second considers a KPZ-equation analog of the Airy sheet construction, and the third notes some other polymer invariances recently proved and queries whether our main theorem has a lifting to the level of stochastic vertex models.

\smallskip
\noindent {\it Notation.}
For an integer $m\in \mathbb{Z}$, we write $\mathbb{Z}_{\geq m}$ to represent all integers $n\geq m$. For two integers $m\leq n$, define $\llbracket m,n\rrbracket :=\{m,\ldots, n\}$ to be the set of all integers between $m$ and $n$, inclusive of the endpoints. For two real numbers $x$ and $y$, we use shorthand $x\wedge y=\min(x,y)$ and $x\vee y=\max(x,y)$. Other notation will be introduced where it is used.

\smallskip
\noindent {\it Acknowledgements.}
I. Corwin wishes to thank Duncan Dauvergne for graciously informing him about the work in progress \cite{DD} and pointing him to reread certain parts of the work of Noumi and Yamada \cite{NY}; Konstantin Matveev for providing an alternative third proof of Theorem \ref{thm:dinv} and granting him permission to produce that proof herein; and  Alan Hammond, Neil O'Connell, Leonid Petrov, Mark Rychnovsky and Xuan Wu for  valuable comments on a first version of this article.
I. Corwin was partially supported by the NSF grants DMS:1811143 and DMS:1664650, and the Packard Fellowship for Science and Engineering.

\section{Discrete polymers}\label{sec:d}
\subsection{Partition function invariance}\label{sec:dpart}
Before stating our main result, Theorem \ref{thm:dinv}, we first introduce non-intersecting (multi)paths, partition functions and the Pitman transform.
\begin{defn}[Non-intersecting (multi)paths]
Fix $n\in\mathbb{Z}_{\geq 2}$. We will consider the graph with vertices $\Lambda_n:=\mathbb{Z}_{\geq 1}\times\llbracket 1,n \rrbracket$ and edges between nearest-neighbors. We will call the second coordinate in $\Lambda_n$ the {\it level} and will plot the points $(x,\ell)\in \Lambda_n$ as in Figure \ref{fig:grid} so that level $\ell=1$ is on the top and level $\ell=n$ is on the bottom. In some of our discussion in what follows, we will consider subsets of $\Lambda_n$ where vertices (and also the edges incident to them) have been removed from the bottom right corner of $\Lambda$ in such a way that the removed vertices form a Young diagram under the French convention. The path and partition function definitions that follow readily generalize to that setting.

A {\it path} in $\Lambda_n$ is a finite ordered sequence of nearest neighbor, up-right connected points $\pi = \big((a_1,b_1),\ldots, (a_{L},b_{L})\big)$ in $\Lambda_n$. Precisely, for any $L\in \mathbb{Z}_{\geq 1}$ we assume that $(a_i,b_i)\in \Lambda_n$ for $i\in \llbracket 1,\ldots L\rrbracket$ and further we assume that for all $i\in \llbracket 1,L-1\rrbracket$ either
\begin{itemize}
\item $a_i-a_{i+1}=1$ and $b_i=b_{i+1}$ or
\item $a_i=a_{i+1}$ and  $b_{i+1}-b_i=1$.
\end{itemize}
The {\it starting point} of such a path is $(a_1,b_1)$ and the {\it ending point} is $(a_L,b_L)$.
We will generally use $\pi$ to denote a path and we will not label the ordered vertices traversed along the path (thus the $(a_i,b_i)$ notation will not be used below and was just introduced to clarify the definition of a path). We will often deal with multiple paths $\pi_1,\ldots,\pi_k$ in which case we will write $\pi=(\pi_1,\ldots,\pi_k)$ and call $\pi$ a {\it multipath}. Two paths $\pi_1$ and $\pi_2$ are called {\it non-intersecting} if, as sets, $\pi_1\cap\pi_2=\emptyset$. In words, $\pi_1$ and $\pi_2$ do not touch at any vertices in $\Lambda_n$ (including the starting and ending points). A multipath $\pi=(\pi_1,\ldots, \pi_k)$ is non-intersecting if for each $1\leq i\neq j\leq k$, $\pi_i$ and $\pi_j$ are non-intersecting. We will assume that all multipaths are non-intersecting. Figure \ref{fig:grid} depicts two non-intersecting paths (hence a multipath): $\pi_1$ from $(u_1,n)$ to $(v_1,1)$ and $\pi_2$ from $(u_2,n)$ to $(v_2,1)$.

\begin{figure}[t]
  \captionsetup{width=.8\linewidth}
  \includegraphics[width=3in]{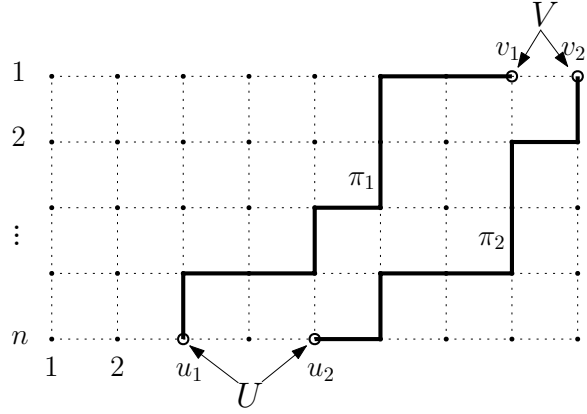}
  \caption{A subset of the lattice $\Lambda_n$ with two non-intersecting paths $\pi_1$ and $\pi_2$ depicted. The starting points $U=\big((3,n),(5,n)\big)$ and ending points $V=\big((8,1),(9,1)\big)$ (depicted by circles) constitute an endpoint pair $(U,V)$ since the set $U\to V$ of non-intersecting paths from $U$ to $V$ is non-empty.}
  \label{fig:grid}
\end{figure}

For any $k\in \mathbb{Z}_{\geq 1}$, and pair of $k$ starting and ending points
\begin{equation*}
U=\big((u_i,\ell_i)\big)_{i\in\llbracket 1,k\rrbracket}\qquad \textrm{and}\qquad V=\big((v_i,m_i)\big)_{i\in\llbracket 1,k\rrbracket}
\end{equation*}
we define the set of (non-intersecting) multipaths $\pi$ from $U$ to $V$
\begin{equation*}
U\to V := \Big\{\pi=(\pi_1,\ldots, \pi_k): \forall i\in \llbracket 1,k\rrbracket, \pi_i \textrm{ starts at }(u_i,\ell_i) \textrm{ and ends at }(v_i,m_i)\Big\}.
\end{equation*}
If the set $U\to V$ is non-empty, then we say that the pair $(U,V)$ constitute an {\it endpoint pair}.
\end{defn}

Our next definition builds upon non-intersecting paths and defines a partition function. The definition of the weight in \eqref{eq:weight} may seem a bit odd. However, if we define $d_{x,m} =  \frac{D_{m}(x)}{D_{m}(x-1)}$, then the partition functions defined below match those generally studied in the context of directed polymers with environment given by the $d_{x,m}$. We will return to this in Section \ref{sec:dRSK}.

\begin{defn}[Partition functions]\label{defn:partition}
Fix any $n$ weakly increasing integers $r_1\leq \cdots \leq r_n$ and then fix any $n$ functions $D_1,\ldots, D_n$ where $D_i:\mathbb{Z}_{\geq r_i} \to (0,\infty)$, for $i\in \llbracket 1,n\rrbracket$. We will adopt the convention that $D_i(r_i-1)=1$ for all $i\in \llbracket 1,n\rrbracket$. Write $D=(D_1,\ldots, D_n)$ to denote the $n$-tuple of such functions. We say that a point $(x,m)$ is {\it in the domain of $D$} if $m \in \llbracket 1,n\rrbracket$ and $x\geq r_{m}$. For any set $U$ of points in the domain of $D$, the {\it weight} of $U$ with respect to $D$ is
\begin{equation}\label{eq:weight}
D\big[U\big] := \prod_{(x,m)\in U} \frac{D_{m}(x)}{D_{m}(x-1)}.
\end{equation}
For a single path $\pi$ composed entirely of points in the domain of $D$, the weight of $\pi$ with respect to $D$ is given as above by treating $\pi$ as a set of points. Explicitly,
\begin{equation}\label{eq:dwt}
D\big[\pi\big] := \prod_{(x,m)\in \pi} \frac{D_{m}(x)}{D_{m}(x-1)}.
\end{equation}
For a multipath $\pi= (\pi_1,\ldots,\pi_k)$ composed of points in the domain of $D$,
\begin{equation*}
D\big[\pi\big] := D\big[\pi_1\big]\cdots D\big[\pi_k\big].
\end{equation*}
This definition is consistent with \eqref{eq:weight} if we treat $\pi$ as the union of the points in each $\pi_i$.
For any endpoint pair $(U,V)$ with both $U$ and $V$ in the domain of $D$, the {\it partition function} from $U$ to $V$ with respect to $D$ is
\begin{equation*}
D\big[U\to V\big] := \sum_{\pi\in U\to V} D\big[\pi\big].
\end{equation*}
It will also be useful to have half-open versions of the partition function:
\begin{equation}\label{eq:clopen}
D\big(U\to V\big] := \frac{D\big[U\to V\big]}{D[U]}\qquad \textrm{and} \qquad D\big[U\to V\big) := \frac{D\big[U\to V\big]}{D[V]}.
\end{equation}
\end{defn}

Our final set of definitions deal with the Pitman transform and its tensorization.
\begin{defn}[Discrete geometric Pitman transform]\label{defn:dpit}
For any $r\in \mathbb{Z}$ and any functions $f,g:\mathbb{Z}_{\geq r}\to (0,\infty)$ define functions $\big(g\odot f\big):\mathbb{Z}_{\geq r}\to (0,\infty)$ and  $\big(f\otimes g\big):\mathbb{Z}_{\geq r+1}\to (0,\infty)$ by
\begin{align}
\big(g\odot f\big)(x)&:= f(x)\cdot \sum_{m=r}^{x} \frac{g(m)}{f(m-1)} &&\textrm{for }x\geq r\\
\big(f\otimes g\big)(x)&:= g(x)\cdot \Big(\sum_{m=r}^{x} \frac{g(m)}{f(m-1)}\Big)^{-1} &&\textrm{for }x\geq r+1\\
\end{align}
where we have adopted the convention that $f(r-1)=1$ in the denominator when $m=r$. Define the operator $\Tau$ which acts on the pair $(f,g)$ as
\begin{equation}\label{eq:taudef}
\Tau\big(f,g\big) := \Big(\big(g\odot f\big),\big(f\otimes g\big)\Big).
\end{equation}
$\Tau$ acts on the tensor-product of two functions from $\mathbb{Z}_{\geq r}\to (0,\infty)$ and returns the tensor product of two functions, the first from $\mathbb{Z}_{\geq r}\to (0,\infty)$ and the second from $\mathbb{Z}_{\geq r+1}\to (0,\infty)$.

For any $m\in \llbracket 1,n-1\rrbracket$ and any $n$ weakly increasing integers $r_1\leq \cdots  \leq r_n$ with $r_m=r_{m+1}=r$ for some $r$, define the operator $\Tau_{r,m}$ which acts on the $n$-fold tensor product $D=(D_1,\ldots, D_n)$ of functions $D_i:\mathbb{Z}_{\geq r_i} \to (0,\infty)$, $i\in \llbracket 1,n\rrbracket$, as
\begin{equation}\label{eq:Taurm}
\Tau_{r,m} D := \Big(D_1,\ldots ,D_{m-1}, D_{m+1}\odot D_{m},D_{m}\otimes D_{m+1},D_{m+2},\ldots,D_{n}\Big).
\end{equation}
$\Tau_{r,m}$ acts as $\Tau$ in the $m$ and $m+1$ slot, and this action is well-defined since by assumption both $D_m,D_{m+1}:\mathbb{Z}_{\geq r}\to (0,\infty)$.

Now consider $n$ functions $D_1,\ldots, D_n:\mathbb{Z}_{\geq 1} \to (0,\infty)$ as in Definition \ref{defn:partition} and write $D=(D_1,\ldots, D_n)$. Using the operators $\Tau_{r,m}$ from \eqref{eq:Taurm}, define the operators $\mathcal{S}_r$ for $r\in \llbracket 1,n-1\rrbracket$ and the operator $\mathcal{W}$ which act on $D$ via
\begin{equation}\label{eq:SW}
\mathcal{S}_rD:=\Tau_{r,r}\Tau_{r,r+1}\cdots \Tau_{r,n-2}\Tau_{r,n-1}D,\qquad
\mathcal{W}D :=\mathcal{S}_{n-1}\mathcal{S}_{n-2}\cdots \mathcal{S}_{2} \mathcal{S}_{1}D.
\end{equation}
$\mathcal{S}_r$ acts on $D$ by first applying (in that order) $\Tau_{r,n-1}$ through $\Tau_{r,r}$. Likewise, $\mathcal{W}$ acts on $D$ by first applying (in that order) $\mathcal{S}_1$  through $\mathcal{S}_{n-1}$.
Notice that the sequence of applications of the $\Tau$ operators in $\mathcal{W}$ is well-defined. Indeed, in terms of the values of $(r_1,\ldots,r_n)$ which set the domain of $D$, initially, as assumed, $D$ has $r_i=i\wedge 1$. After applying $\Tau_{1,n-1}$ to $D$, the value of $r_{n}$ becomes $2$ (all others stay the same). Continuing in this manner, we see that for $r\in\llbracket 1,n-1\rrbracket$, $\mathcal{S}_r\cdots \mathcal{S}_1D$ has domain specified by $r_i=i\wedge (r+1)$ for $i\in \llbracket 1,n\rrbracket$. In other words, when we apply the $\Tau_{r,m}$ operator, the domain changes from having $r_m=r_{m+1}=r$ to having $r_{m}=r$ and $r_{m+1}=r+1$. Figure \ref{fig:Uhat} shows how the original domain for $D$ transforms into the domain for $\mathcal{W}D$. There is a graphical depiction of this sequence of applications of $\Tau$ which is discussed and used in Section \ref{sec:dRSK}. Also therein we explain how this operator $\mathcal{W}$ is related to the geometric RSK correspondence.
\end{defn}

We are now ready to state the main result of this paper. There is a semi-discrete version of this which can be found as Theorem \ref{thm:sdinv} in Section \ref{sec:sdpart}. Also, there are zero-temperature versions of this result which are stated as Corollary \ref{sec:dzero} in Section \ref{sec:dzero} in the discrete case, and  as Corollary \ref{sec:sdzero} in Section \ref{sec:sdzero} in the semi-discrete case. In fact, all of these results can be derived as a limit of our main theorem below.

\begin{thm}\label{thm:dinv}
Let $U=\big((u_i,n)\big)_{i\in\llbracket 1,k\rrbracket}$ and $V=\big((v_i,1)\big)_{i\in\llbracket 1,k\rrbracket}$ be any endpoint pair and define $\ua U := \big((u_i,n\wedge u_i)\big)_{i\in\llbracket 1,k\rrbracket}$ (see Figure \ref{fig:Uhat} for an illustration). Then, for any $n$ functions $D_1,\ldots, D_n:\mathbb{Z}_{\geq 1}\to (0,\infty)$, writing $D=(D_1,\ldots, D_n)$ we have
\begin{equation}\label{eq:dinv}
D\big[U\to V\big] = \big(\mathcal{W}D\big)\big[\ua U\to V\big]
\end{equation}
where the operator $\mathcal{W}$ is defined in \eqref{eq:SW}.
\end{thm}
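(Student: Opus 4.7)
The plan is to prove Theorem \ref{thm:dinv} by induction on the sequence of elementary Pitman-type transforms $\Tau_{r,m}$ comprising $\mathcal{W} = \mathcal{S}_{n-1}\cdots\mathcal{S}_1$. At each intermediate stage I carry a pair $(D^*, U^*)$ where $U^*$ is the current ``lifted'' starting-point set, maintaining the inductive invariant $D^*[U^* \to V] = D[U \to V]$. The base case $(D, U)$ and the target $(\mathcal{W}D, \ua U)$ match the statement of the theorem. Everything therefore reduces to a \emph{local invariance lemma}: for any $D^*$ with $r_m^* = r_{m+1}^* = r$,
\[
D^*[U^* \to V] \;=\; (\Tau_{r, m} D^*)[\ua_{r, m} U^* \to V],
\]
where $\ua_{r, m}$ lifts any point at $(r, m+1)$ to $(r, m)$ and acts as the identity elsewhere.

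To prove the local invariance, I decompose each multipath by its ``entry columns'' $X = (x_i)$ into row $m+1$ and its ``exit columns'' $Z = (z_i)$ out of row $m$. Since $\Tau_{r,m}$ only modifies $D_m^*$ and $D_{m+1}^*$, and since the non-intersection condition decouples across the three row bands ``above $m$'', ``rows $\{m, m+1\}$'', and ``below $m+1$'', this yields the factorization
\[
D^*[U^* \to V] \;=\; \sum_{X, Z} D^{\mathrm{bot}}\big[\cdots \to (X, m+2)\big] \cdot D^{\mathrm{mid}}\big[(X, m+1) \to (Z, m)\big] \cdot D^{\mathrm{top}}\big[(Z, m-1) \cup \cdots \to V\big],
\]
in which the top and bottom partition functions are unchanged by $\Tau_{r,m}$. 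A structural check shows that at the moment $\Tau_{r,m}$ is applied in the ordering of $\mathcal{W}$, no starting point of $U^*$ sits on row $m$ (the prior liftings within $\mathcal{S}_r$ carry any original column-$r$ starting point $(r,n)$ only as far as row $m+1$), so the decomposition is consistent with the lifting only happening at $(r, m+1)$.

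Everything thus reduces to the \emph{2-row invariance} for the middle factor: with $(f, g) = (D_m^*, D_{m+1}^*)$ and $S(y) = \sum_{j=r}^y g(j)/f(j-1)$, the key identity is
\[
\frac{(f \otimes g)(y)}{(g \odot f)(y-1)} \;=\; \frac{1}{S(y-1)} - \frac{1}{S(y)}.
\]
In the 2-row setup a non-intersecting multipath is specified by transition columns $y_i \in [a_i, b_i]$ with $a_i = \max(x_i, z_{i-1}+1)$, $b_i = \min(x_{i+1}-1, z_i)$. A direct computation expresses each single-path contribution as a telescoping sum, and the ratio of transformed to original contributions for path $i$ simplifies to $S(z_i) S(x_i-1) / [S(a_i-1) S(b_i)]$. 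These ``extra'' factors equal one except on the boundary configurations $a_i = z_{i-1}+1$ and $b_{i-1} = x_i-1$ (both triggered by the single condition $z_{i-1} \geq x_i$), in which case the boundary factors $S(x_i-1)/S(z_{i-1})$ (from path $i$) and $S(z_{i-1})/S(x_i-1)$ (from path $i-1$) cancel exactly in the product. The lifted boundary case $x_i = r$ is subsumed via the conventions $S(r-1) = 0$ and $g(r-1) = 1$, under which the row-$m$-only contribution $(g \odot f)(z_i) = f(z_i)S(z_i)$ for the lifted starting point at $(r, m)$ matches the two-row original value $f(z_i)S(b_i)$ up to the same universal ratio.

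The main obstacle will be the pairwise cancellation above for arbitrary $k$: one must verify that the ``triggering'' configurations for $a_i > x_i$ and $b_{i-1} < z_{i-1}$ always occur in synchronized pairs and never leave orphaned $S$-factors, a purely combinatorial bookkeeping that is slightly delicate because of the ordering $x_1 < \cdots < x_k$ and $z_1 < \cdots < z_k$. A secondary technical point is the careful tracking of which rows the intermediate lifted $U^*$ occupies at each stage of the $\Tau$-sequence, so that the top/middle/bottom decomposition remains consistent and the lifting at each step only affects points currently at $(r, m+1)$.
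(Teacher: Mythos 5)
Your proposal is correct, and its architecture coincides with the paper's: you induct over the $\binom{n}{2}$ elementary transforms making up $\mathcal{W}$, prove a local invariance $D^*[U^*\to V]=(\Tau_{r,m}D^*)[\ua_{r,m}U^*\to V]$ (this is exactly Lemma \ref{lem:inductive}), isolate rows $m,m+1$ by a band decomposition (the analogue of \eqref{eq:bigdecomp}, with your closed splits at distinct vertices replacing the paper's half-open factors $D[\cdot\to\cdot)$, $D(\cdot\to\cdot]$ -- both bookkeepings avoid double counting), and verify the structural fact that at the moment $\Tau_{r,m}$ acts no start point sits on row $m$ and the column-$r$ point sits on row $m+1$. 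The one place you genuinely diverge is the two-row multipath step: the paper factorizes the strip into type-$0/1/2$ intervals and applies the $k=1$ identity \eqref{eq:exactdiscide} interval by interval, whereas you keep each path's full weight from its entry column $x_i$ to its exit column $z_i$, sum over the transition column $y_i\in[a_i,b_i]$ via the telescoping identity $\frac{(f\otimes g)(y)}{(g\odot f)(y-1)}=\frac{1}{S(y-1)}-\frac{1}{S(y)}$ (which, summed over $y$, is precisely \eqref{eq:exactdiscide}), and cancel the residual factors $S(x_i-1)S(z_i)/[S(a_i-1)S(b_i)]$ pairwise. The cancellation you flag as the main obstacle is in fact automatic: the right factor of path $i$ is nontrivial iff $z_i\geq x_{i+1}$, which is exactly the condition making the left factor of path $i+1$ nontrivial, and then they are mutual reciprocals $S(z_i)/S(x_{i+1}-1)$ and $S(x_{i+1}-1)/S(z_i)$, while the outermost factors are $1$ since $a_1=x_1$ and $b_k=z_k$; the lifted case $x_1=r$ contributes only the right factor $S(z_1)/S(b_1)$ and cancels the same way. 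So your route buys a self-contained computation for general $k$ in the two-row strip at the cost of the ratio bookkeeping, while the paper's interval factorization buys termwise invariance with no cross-path cancellation; the domain-tracking issue you mention is handled exactly as in the paper's hypothesis that all points of $U^*$ sit at the bottom of the current domain.
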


\begin{rem}\label{rem:newchallenges}
We follow the same three step program as used to prove \cite[Proposition 4.1]{DOV} (stated here as Corollary \ref{sec:sdzero} in Section \ref{sec:sdzero}), though encounter some new complexities due to the discreteness of our present setting.
In Step 1 we consider $n=2$ and $k=1$ where $U=(u,2)$ and $V=(v,1)$ (note that for simpler notation in the $k=1$ case we drop the extra parenthesis since really $U=\big((u,2)\big)$ and likewise $V=\big((v,1)\big)$). The proof of this special case reduces to a summation identity which we readily checked by induction. Step 2 extends the result to $n=2$ and $k\geq 1$ by a decomposition of $D\big[U\to V\big]$ into a product over terms to which the result of Step 1 can be applied. Step 3 extends the result to $n\geq 2$ and $k\geq 1$ via a decomposition of $D\big[U\to V\big]$ into a sum of products of terms to which the result from Step 2 can be applied. Besides needing to prove a new summation identity in Step 1 (which is rather simple), the most significant new complexity in this proof arises from the fact that the operators $\Tau_{r,m}$ shift the domain of the functions upon which they act. Keeping track of this effect and constructing a suitable decomposition which works in these deformed domains constitutes the main challenge of the proof (which mainly arises in Step 3).

Besides proving the above theorem, it was not immediately obvious how to correctly formulate the discrete geometric generalization of \cite[Proposition 4.1]{DOV}. Two things provided some guidance in this regard. The first was a set of online notes by Y. Pei available at \url{https://toywiki.xyz/} in which the discrete non-geometric Pitman transform was discussed. The second was the work of \cite{COSZ} on the geometric RSK correspondence. Indeed, in Section \ref{sec:dRSK} we explain how the discrete geometric Pitman transform and $\mathcal{W}$ operator are related to the geometric RSK correspondence.
\end{rem}

\begin{defn}\label{def:ua}
Theorem \ref{thm:dinv} introduces the notation $\ua U$. In the proof we will need a refinement of it that we present here. For any integers $r$ and $m$, define the operator $\ua_{r,m}$ which acts on sets $U$ of points $U=\big((u_i,\ell_i)\big)_{i\in\llbracket 1,k\rrbracket}$ (for $k$ arbitrary) as follows: if $(r,m+1)\in U$, then $\ua_{r,m}U$ is composed of all points in $U$ except that $(r,m+1)$ is replaced by $(r,m)$ (if $(r,m)\in U$ as well, then $\ua_{r,m}U$ is $U$ with the point $(r,m+1)$ removed); if $(r,m+1)\notin U$, then $\ua_{r,m}U=U$. In a similar spirit to \eqref{eq:SW}, we define
\begin{equation}\label{eq:UA}
\ua_r:=\ua_{r,r}\ua_{r,r+1}\cdots \ua_{r,n-2}\ua_{r,n-1},\qquad
\ua :=\ua_{n-1}\ua_{n-2}\cdots \ua_2 \ua_{1}.
\end{equation}
The $\ua U$ defined in \eqref{eq:UA} matches $\ua U$ defined in the statement of Theorem \ref{thm:dinv}.
\end{defn}

\begin{proof}[Proof of Theorem \ref{thm:dinv}] We prove this in three steps.

\smallskip
\noindent {\bf Step 1 ($n=2$, $k=1$ case of \eqref{eq:dinv}).}
In this case, $D=(D_1,D_2)$, $U=(u,2)$ and $V=(v,1)$ (actually, $U=\big((u,2)\big)$ and $V=\big((v,1)\big)$, but we will drop the outer parenthesis in this case). Having in mind our use of this step later in Step 3 (where we deal with $n\geq 2$), we may consider a slightly more general situation where, for some $r\geq 1$, $D_1,D_2:\mathbb{Z}_{\geq r}$. In that case, what we seek to prove that for any $r\leq u\leq v$,
\begin{equation}\label{eq:dinvr}
D\big[(u,2)\to (v,1)\big] = \big(\Tau_{r,1} D\big)\big[\big(u,2\wedge(u-r+1)\big)\to (v,1)\big].
\end{equation}
Note that $\big(u,2\wedge(u-r+1)\big) = \ua_{r,1}U$. It suffices to prove \eqref{eq:dinvr} with $r=1$ since everything can be trivially shifted into larger $r$. So, for the rest of this step, let us prove \eqref{eq:dinvr} under the assumption that $r=1$. In that case $\big(u,2\wedge(u-r+1)\big) = \ua_{1,1} U=\ua U$.

The left-hand side of \eqref{eq:dinvr} can be written explicitly as
\begin{equation}\label{eq:ktwolhs}
D\big[(u,2)\to(v,1)\big]  = \frac{D_1(v)}{D_2(u-1)}\sum_{m=u}^{v} \frac{D_2(m)}{D_1(m-1)}
\end{equation}
where we have used the earlier assumed convention that $D_i(0)=1$ for $i=1$ and $2$.

Clearly there are two cases to consider, when $u=1$ and when $u>1$. When $u=1$, the identity we seek to prove reads (writing $\Tau$ in place of $\Tau_{1,1}$)
\begin{equation}\label{eq:DWD}
D\big[(1,2)\to(v,1)\big] = \big(\Tau D\big)\big[(1,1)\to (v,1)\big].
\end{equation}
Since there is only one path from $(1,1)$ to $(v,1)$, the right-hand side of \eqref{eq:DWD} equals the weight of the path from $(1,1)$ to $(v,1)$ with respect to $\Tau D$. Observing the telescoping of the product in \eqref{eq:dwt} and using the assumed convention that $(\Tau D)_1(0)=1$, we find that the right-hand side of \eqref{eq:DWD} equals $(\Tau D)_1(v)$. It follows from Definition \ref{defn:dpit} that $(\Tau  D)_1(v) = \big(D_2\odot D_1\big)(v)$ which equals the expression in \eqref{eq:ktwolhs}, thus proving the case $u=1$.

The case $x>1$ is just a bit harder. Now $\ua U=U=(u,2)$ so, using \eqref{eq:ktwolhs}, we seek to prove
\begin{equation}\label{eq:seekDWD}
\frac{D_1(v)}{D_2(u-1)}\sum_{m=u}^{v} \frac{D_2(m)}{D_1(m-1)} = \frac{\big(\Tau D\big)_1(v)}{\big(\Tau D\big)_2(u-1)}\sum_{m=u}^{v} \frac{\big(\Tau D\big)_2(m)}{\big(\Tau D\big)_1(m-1)}.
\end{equation}
Keeping in mind our convention that $D_i(0)=1$, we introduce the notation
\begin{equation*}
G_{m}:= \frac{D_2(m)}{D_1(m-1)}\qquad \textrm{and}\qquad G_{a,b}:=\sum_{m=a}^{b}G_{m}\quad \textrm{for integers }1\leq a<b.
\end{equation*}
We can rewrite $\big(\Tau D\big)_1$ and $\big(\Tau D\big)_2$ via this notation as:
\begin{equation*}
\big(\Tau D\big)_1(v) = D_1(v)G_{1,v}\qquad \textrm{and}\qquad \big(\Tau D\big)_2(u-1) = D_2(u-1)\big(G_{1,u-1}\big)^{-1}.
\end{equation*}
Using this and canceling the common factor $\frac{D_1(v)}{D_2(u-1)}$ from both sides, \eqref{eq:seekDWD} reduces to
\begin{equation*}
G_{u,v} = G_{1,v}G_{1,u-1} \sum_{m=u}^v \frac{G_m}{G_{1,m}G_{1,m-1}},
\end{equation*}
or equivalently (after moving the $ G_{1,v}G_{1,u-1}$ terms to the left-hand side)
\begin{equation}\label{eq:exactdiscide}
\frac{G_{u,v}}{G_{1,v}G_{1,u-1}} =  \sum_{m=u}^v \frac{G_m}{G_{1,m}G_{1,m-1}}.
\end{equation}
\eqref{eq:exactdiscide} is an exact summation identity which follows by induction in $v$. The base case $v=u$ is easily checked. Assuming the identity up to $v$, to show  \eqref{eq:exactdiscide} for $v+1$ it suffices to verify that
\begin{equation*}
\frac{G_{u,v+1}}{G_{1,v+1}G_{1,u-1}}  = \frac{G_{u,v}}{G_{1,v}G_{1,u-1}}  + \frac{G_{v+1}}{G_{1,v+1}G_{1,v}}.
\end{equation*}
Decomposing all terms into $G_{1,u-1}$, $G_{u,v}$ and $G_{v+1}$ (i.e., $G_{u,v+1} = G_{u,v}+G_{v+1}$, $G_{1,v+1}=  G_{1,u-1}+G_{u,v}+G_{v+1}$, and $G_{1,v} = G_{1,u-1}+G_{u,v}$) and cross-multiplying immediately yields the eqaulity. This completes the proof of \eqref{eq:exactdiscide} and hence also the proof of \eqref{eq:seekDWD} and Step 1.

\smallskip
\noindent {\bf Step 2 ($n=2$, $k\geq 1$ case of \eqref{eq:dinv}).}
Now $D=(D_1,D_2)$,  $U=\big((u_i,2)\big)_{i\in\llbracket 1,k\rrbracket}$ and $V=\big((v_i,1)\big)_{i\in\llbracket 1,k\rrbracket}$. As in Step 1, we will consider $D_1,D_2:\mathbb{Z}_{\geq r}\to (0,\infty)$ for some $r\geq 1$, and assume that all $u_i,v_i\geq r$. In this step we seek to prove that
\begin{equation}\label{eq:step2id}
D\big[U\to V\big]=\big(\Tau_{r,1}D\big)\big[\ua_{r,1} U \to V\big].
\end{equation}
Recall that $\ua_{r,1}$ is given in Definition \ref{def:ua}. In particular,  $\ua_{r,1} U =\big((u_i,2\wedge (u_i-r+1))\big)_{i\in\llbracket 1,k\rrbracket}$. As in Step 1, it suffices to prove \eqref{eq:step2id} for $r=1$, in which case $\ua_{1,1} U =\ua U$. For the rest of the proof we assume $r=1$. In that case \eqref{eq:step2id} involves $\Tau_{1,1}$ which we abbreviate  as $\Tau$.

The proof of \eqref{eq:step2id} is based on a decomposition of $D\big[U\to V\big]$ and $\big(\Tau D\big)\big[\ua U\to V\big]$ into factors which can be matched by applying the $k=1$ result from Step 1. We start by partitioning the integers $\llbracket u_1,v_k\rrbracket$ into disjoint intervals that we call type 0, type 1 or type 2. With the convention that $v_{0}=-\infty$ and $u_{k+1}=+\infty$, for each $i\in \llbracket 1,k\rrbracket$, let $a_i = u_i\vee (v_{i-1}+1)$ and $b_i=(u_{i+1}-1)\wedge v_i$. If $a_i<b_i$, then we say that  the interval $\llbracket a_i,b_i\rrbracket$ is of type $1$, and we write $\type_1$ to be the set of all such type $1$ intervals. Likewise, for each $i\in \llbracket 1,k\rrbracket$, any integer $c$ such that $y_{i-1}<c<x_{i}$ will be called type $0$. All integers which are not type $0$ and are not in intervals of type $1$ will be called type $2$ and we write $\type_2$ for the set of all such type $2$ integers. Figure \ref{fig:step2} provides an illustration for this partitioning.

%
\begin{figure}[t]
  \captionsetup{width=.8\linewidth}
  \includegraphics[width=5in]{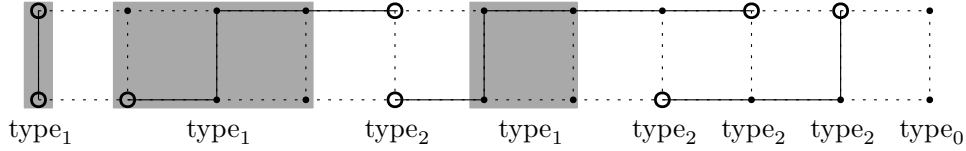}
  \caption{$U$ is represented by the open discs on the bottom level and $V$ on the top level. Intervals of type $1$ are shaded in grey; integers of types $0$ and $2$ are also labeled, but not shaded. Notice that the first type $1$ interval is actually composed of just a single integer, while the second one contains three consecutive integers and the third one contains two consecutive integers. Each type $1$ interval represents a place where multipaths $\pi\in U\to V$ may vary in terms of which points are included in $\pi$.}
  \label{fig:step2}
\end{figure}

Using the decomposition into types, we can write
\begin{equation}\label{eq:typefactor}
D\big[U\to V\big] = \prod_{\llbracket a,b \rrbracket\in \type_1} D\big[(a,2)\to (b,1)\big] \prod_{c\in\type_2} D\big[(c,2)\to (c,1)\big].
\end{equation}
To verify this note that each type 1 term can be expanded as a sum of single-path weights over paths from the bottom left to top right of the interval in question (type $1$ intervals which are composed of a single integer correspond to a single path which goes straight up from the bottom layer to the top layer). All type $2$ terms contribute a product of weights $D\big[(c,1)\big]D\big[(c,2)\big]$, and all type $0$ terms contribute a multiplicative factor of 1. This expansion shows that the right-hand side of \eqref{eq:typefactor} can be written as a sum over certain subsets of vertices of products of the weights of the vertices. By the construction of the three types, it is easy to see that these subsets are precisely in bijective correspondence with the set of multipaths in $U\to V$, hence proving \eqref{eq:typefactor}. See Figure \ref{fig:step2} for an illustration.

Given the decomposition in \eqref{eq:typefactor}, we may now apply the result of Step 1 to each term. In the product over $\type_1$ terms, note that the result of Step 1 implies that
\begin{equation*}
D\big[(a,2)\to (b,1)\big] = \big(\Tau D\big)\big[(a,2\wedge a)\to (b,1)\big].
\end{equation*}
Thus, we can replace each term in that product by the corresponding term involving $\Tau D$. For the terms in the $\type_2$ product, we also have that
\begin{equation*}
D\big[(c,2)\to (c,1)\big]= \big(\Tau D\big)\big[(c,2\wedge c)\to (c,1)\big].
\end{equation*}
This can be seen quite directly, or as a simple application of the result of Step 1 when $a=b$. Putting this all together we find that the right-hand side of \eqref{eq:typefactor} equals
\begin{equation*}
\prod_{\llbracket a,b\rrbracket\in \type_1} \big(\Tau D\big)\big[(a,2\wedge a)\to (b,1)\big] \prod_{c \in\type_2} \big(\Tau D\big)\big[(c,2\wedge c)\to (c,1)\big] = \big(\Tau D\big)\big[\ua U\to V\big],
\end{equation*}
The equality above is a similar decomposition as in \eqref{eq:typefactor}, the main difference being that $U$ is replaced by $\ua U$. This completes the proof of \eqref{eq:step2id} (with $r=1$) and hence Step 2.

\smallskip
\noindent {\bf Step 3 ($n\geq 2, k\geq 1$ case of \eqref{eq:dinv}).}
In this case, $D=(D_1,\ldots, D_n)$, $U=\big((u_i,n)\big)_{i\in\llbracket 1,k\rrbracket}$ and $V=\big((v_i,1)\big)_{i\in\llbracket 1,k\rrbracket}$. The key to this proof is a decomposition of $D\big[U\to V\big]$ into a sum of products to which we can apply the $n=2$ result proved earlier in Step 2. Since this decomposition is a bit involved, we will first explicitly work out the $n=3$ proof in that hope that it may make it easier to understand the general $n$ proof.

Before embarking on the $n=3$ case  it will be useful to rewrite the identity \eqref{eq:step2id} proved in Step 2 in terms of the notation that we will use below. Consider any $m\in \llbracket 1,n-1\rrbracket$ and any sequence of weakly increasing integers $r_1\leq \cdots \leq r_n$ such that $r_{m}=r_{m+1}=r$ for some $r$. Further, consider any $n$ functions $\tilde D=(\tilde D_1,\ldots, \tilde D_n)$ where  $\tilde D_i:\mathbb{Z}_{\geq r_i} \to (0,\infty)$ for $i\in \llbracket 1,n\rrbracket$, and any endpoint pair $(W,Z)$ where $W=\big((w_i,m+1)\big)_{i\in \llbracket 1,k\rrbracket}$, $Z=\big((z_i,m)\big)_{i\in \llbracket 1,k\rrbracket}$ and all $w_i,z_i\geq r$. Then, \eqref{eq:step2id} of Step 2 implies that
\begin{equation}\label{eq:step2idredux}
\tilde D\big[W\to Z\big]=\big(\Tau_{r,m} \tilde D\big)\big[\ua_{r,m} W\to Z\big].
\end{equation}

We return now to proving \eqref{eq:dinv} with $n=3$. We will refer to Figure \ref{fig:threelevel} in describing certain steps the proof. Recall that for $n=3$, $\mathcal{W} = \mathcal{S}_2 \mathcal{S}_1 = \Tau_{2,2}\Tau_{1,1}\Tau_{1,2}$. Thus, in order to prove the equality in \eqref{eq:dinv}, we will prove three intermediate equalities, relating an expression with $D$ to one with $\Tau_{1,2}D$, then relating that to an expression with $\Tau_{1,1}\Tau_{1,2}D$ and finally relating that to an expression with $\Tau_{2,2}\Tau_{1,1}\Tau_{1,2}D$. Collecting these equalities together will prove \eqref{eq:dinv}.

We start with the following decomposition (recall half-open partition functions from \eqref{eq:clopen})
\begin{equation}\label{eq:UVlongdecom}
D\big[U\to V\big] = \sum_{Z} D\big[U\to W^+)D\big[W^+\to W) D\big[W\to Z\big] D\big(Z\to Z^-] D\big(Z^-\to V],
\end{equation}
with notation that we now define.
The summation in \eqref{eq:UVlongdecom} is over $Z= \big((z_i,2)\big)_{i\in\llbracket 1,k\rrbracket}$.
Given $Z$, we define $Z^-:= \big((z_i,1)\big)_{i\in\llbracket 1,k\rrbracket}$ (i.e., points in $Z^-$ match those in $Z$ except they have a level one less).
The $W$ and $W^+$ are simply set equal to $U$. (It may seem unnecessary to include $W$ and $W^+$ here, and it is. However, in the general $n$ case they will be important, so we keep them to match with the notation eventually used there.)
The summation in \eqref{eq:UVlongdecom} is restricted to only those $Z$ such that $(U,W^+)$, $(W^+,W)$, $(W,Z)$, $(Z,Z^-)$ and $(Z^-,V)$ are all endpoint pairs.
Note that due to the assumption that $W=W^+=U$ and the relationship between $Z$ and $Z^-$, these five conditions reduce to two: that $(W,Z)$ and $(Z^-,V)$ are endpoint pairs.
Furthermore, the summand in the right-hand side of \eqref{eq:UVlongdecom} reduces to $D\big[U\to Z\big] D\big(Z\to Z^-] D\big(Z^-\to V]$. Display box $1$ of Figure \ref{fig:threelevel} illustrates the choices of $W,W^+,Z,Z^-$ in this decomposition.

\begin{figure}[t]
  \captionsetup{width=.8\linewidth}
  \includegraphics[width=6in]{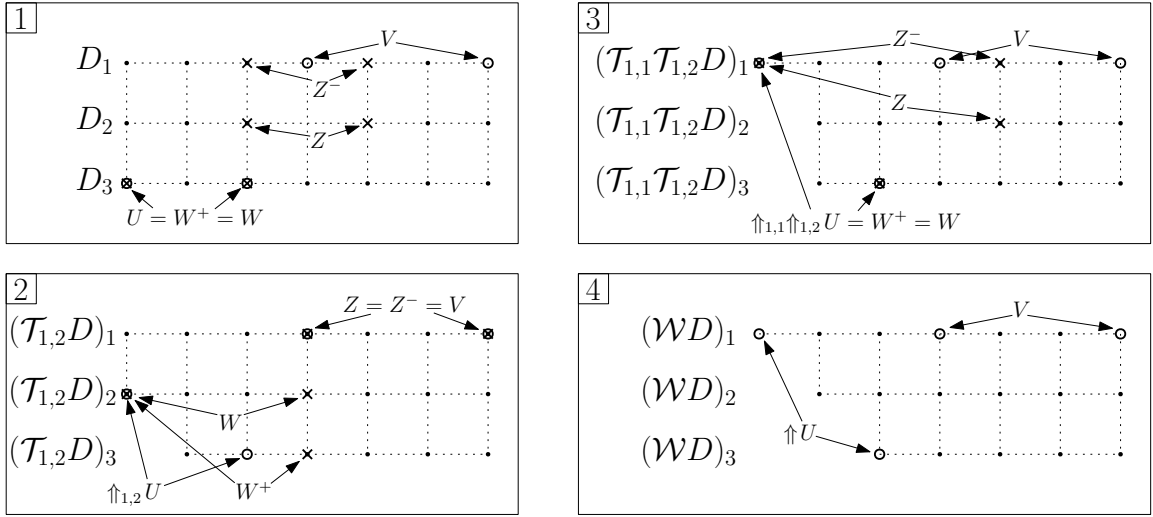}
  \caption{Sets of points used in the proof of (\ref{eq:dinv}) when $n=3$ and $k=2$.}
  \label{fig:threelevel}
\end{figure}

To see why the decomposition in \eqref{eq:UVlongdecom} holds, recall that $D\big[U\to V\big]$ is equal to a sum of weights of multipaths $\pi$ from $U$ to $V$. We can partition that sum based on the locations  $Z$ of the exit points for the multipath $\pi$ going from level 2 to level 1 (hence $Z^-$ has the meaning as the entry point locations into level 1). The sum over weights of multipaths $\pi$ which have a given value of $Z$ factorizes precisely as in \eqref{eq:UVlongdecom}, thus proving the decomposition. Notice that the use of the half-open and half-closed path sums defined in \eqref{eq:clopen} is important here to ensure that the correct weight factors arise in our decomposition.

The decomposition in \eqref{eq:UVlongdecom} has isolated the dependence on $D_2$ and $D_3$ in the right-hand side summand. Indeed, since $D\big[U\to W^+)=D\big[W^+\to W)=1$, they do not depend on $D_2$ or $D_3$, and clearly $D\big(Z\to Z^-]=D[Z^-]$  and $D\big(Z^-\to V]$ only depend on $D_1$.
This leaves $D\big[W\to Z\big]$ as the only term which involves $D_2$ and $D_3$.
We now apply \eqref{eq:step2idredux} with $r=1$, $m=2$, and $\tilde D = D$.
This implies that $D\big[W\to Z\big] =\big(\Tau_{1,2} D\big)\big[\ua_{1,2} W\to Z\big]$.
Since $\big(\Tau_{1,2}D\big)_1=D_1$, we may replace $D$ by $D'=\big(\Tau_{1,2}D\big)$ in the other terms in the summand of \eqref{eq:UVlongdecom} without changing their values (they are all either constant or only depend on $D_1$).
Moreover, since $D\big[U\to W^+)=D\big[W^+\to W)=1$, we may replace these terms by $D\big[\ua_{1,2}U\to \ua_{1,2} W^+)=D\big[\ua_{1,2} W^+\to \ua_{1,2} W)=1$.
This implies that, denoting $D'= \Tau_{1,2} D$,  $U'=\ua_{1,2}U$, $W'^+=\ua_{1,2}W^+$ and $W'=\ua_{1,2}W$,
\begin{equation*}
D\big[U\to V\big] =  \sum_{Z} D'\big[U'\to  W'^+)D'\big[W'^+\to W') D'\big[W'\to Z\big] D'\big(Z\to Z^-] D'\big(Z^-\to V],
\end{equation*}
where we now assume that $U'= W'^+=W'$ and where the sum over $Z$ is such that $(W',Z)$ and $(Z^-,V)$ are endpoint pairs (note that the condition that $(W',Z)$ is an endpoint pair imposes the same restriction on $Z$ as the condition that $(W,Z)$ is an endpoint pair). This right-hand side is a decomposition of $D'\big[U'\to V\big]$ in the same spirit as \eqref{eq:UVlongdecom}, hence we have shown that (going back to $\Tau_{1,2}D$ in place of $D'$ and $\ua_{1,2}U$ in place of $U'$)
\begin{equation}\label{eq:UVlongdecom2}
D\big[U\to V\big] = \big(\Tau_{1,2}D\big)\big[\ua_{1,2} U \to V\big].
\end{equation}
This is the first of our three intermediate equalities.

In order to relate the right-hand side of \eqref{eq:UVlongdecom2} to an expression involving $\Tau_{1,1}\Tau_{1,2}D$, we employ another decomposition.
As above, let us denote $D'= \Tau_{1,2} D$ and $U'=\ua_{1,2}U$. Then,
\begin{equation}\label{eq:UVlongdecom3}
D'\big[U'\to V\big] = \sum_{W} D'\big[U'\to W^+)D'\big[W^+\to W) D'\big[W\to Z\big] D'\big(Z\to Z^-] D'\big(Z^-\to V],
\end{equation}
with notation that we now define. The summation in \eqref{eq:UVlongdecom2} is over $W= \big((w_i,2)\big)_{i\in\llbracket 1,k\rrbracket}$.
Given $W$, we define $W^+=\big((w_i,(w_i+1)\wedge 3)\big)_{i\in\llbracket 1,k\rrbracket}$ (i.e., as long as the first coordinate is $\geq 2$, every point in $W$ corresponds to a point in $W^+$ with one larger level; if $(1,2)\in W$ then $(1,2)\in W^+$ as well).
The $Z$ and $Z^-$ are simply set equal to $V$.
The summation in \eqref{eq:UVlongdecom2} is restricted to only those $W$ such that $('U,W^+)$, $(W^+,W)$, $(W,Z)$, $(Z,Z^-)$ and $(Z^-,V)$ are all endpoint pairs.
Note that due to the assumption that $Z=Z^-=V$ and the relationship between $W$ and $W+$, these five conditions reduce to two: that $('U,W^+)$ and $(W,Z)$ are endpoint pairs.
Furthermore, the summand in the right-hand side of \eqref{eq:UVlongdecom2} reduces to $D'\big[U'\to W^+)D'\big[W^+\to W) D'\big[W\to Z\big]$. Display box $2$ of Figure \ref{fig:threelevel} illustrates the choices of $W,W^+,Z,Z^-$ in this decomposition. The same reasoning as for the first decomposition \eqref{eq:UVlongdecom} applies in justifying its validity.

Now, notice that the decomposition in \eqref{eq:UVlongdecom2} has isolated the dependence on $D'_1$ and $D'_2$ in the right-hand side summand. Indeed, since $D'\big(Z\to Z^-]= D'\big(Z^-\to V]=1$, they do not depend on $D'_1$ or $D'_2$, and clearly $D'\big[U'\to W^+)$ and $D'\big[W^+\to W)$ only depend on $D'_3$ (remember the notation defined in \eqref{eq:clopen} to see this).
This leaves $D'\big[W\to Z\big]$ as the only term which involves $D'_1$ and $D'_2$.
We now apply \eqref{eq:step2idredux} with $r=1$, $m=1$, and $\tilde D = D'$.
This implies that $D'\big[W\to Z\big] =\big(\Tau_{1,1} D'\big)\big[\ua_{1,1} W\to Z\big]$.
Since $\big(\Tau_{1,1}D'\big)_3=D'_3$, we may replace $D'$ by $D''=\big(\Tau_{1,1}D'\big)$ in the other terms in the summand of \eqref{eq:UVlongdecom2} without changing their values (they are all either constant or only depend on $D'_3$).
Now observe that
\begin{equation*}
D''\big[U'\to W^+)=D''\big[\ua_{1,1}U'\to \ua_{1,1}W^+) \quad \textrm{and} \quad
D''\big[W^+\to W)=D''\big[\ua_{1,1}W^+\to \ua_{1,1}W),
\end{equation*}
and that the condition that  $(U',W^+)$ and $(W,Z)$ are endpoint pairs is equivalent to the condition that  $(\ua_{1,1} U,\ua_{1,1}W^+)$ and $(\ua_{1,1}W,Z)$ are endpoint pairs. Thus, writing $U''=\ua_{1,1}U'$, $W'^+=\ua_{1,1}W^+$ and $W'=\ua_{1,1}W$
we have shown that
\begin{equation}\label{eq:UVlongdecom33}
D'\big[U'\to V\big] = \sum_{W'} D''\big[U''\to W'^+)D''\big[W'^+\to W') D''\big[W'\to Z\big] D''\big(Z\to Z^-] D''\big(Z^-\to V],
\end{equation}
where $Z=Z^-=V$ and where the sum is restricted to $W'$ such that $(U'',W'^+)$ and $(W',Z)$ are endpoint pairs. This, however, is a decomposition for $D''\big[U''\to V\big]$, and hence we have shown that (going back to $\Tau_{1,1}\Tau_{1,2}D$ in place of $D''$ and $\ua_{1,1}\ua_{1,2}U$ in place of $U''$)
\begin{equation}\label{eq:UVlongdecom4}
\big(\Tau_{1,2}D\big)\big[\ua_{1,2}U\to V\big] = \big(\Tau_{1,1}\Tau_{1,2}D\big)\big[\ua_{1,1}\ua_{1,2} U \to V\big].
\end{equation}
This is the second of our three intermediate equalities.

The final step is to use a decomposition similar to that of \eqref{eq:UVlongdecom} to relate the right-hand side of \eqref{eq:UVlongdecom4} above to an expression involving $\Tau_{2,2}\Tau_{1,1}\Tau_{1,2}D$. As above, let us denote $D''=\Tau_{1,1}\Tau_{1,2}D$ and $U''=\ua_{1,1}\ua_{1,2} U$. Then
\begin{equation}\label{eq:UVlongdecom5}
D''\big[U''\to V\big] = \sum_{Z} D''\big[U''\to W^+)D''\big[W^+\to W) D''\big[W\to Z\big] D''\big(Z\to Z^-] D''\big(Z^-\to V],
\end{equation}
with notation that we now define. The summation is over sets of $k$ points $Z$ which are either on level $2$ or the point $(1,1)$. For each point on level 2 in $Z$, there is a point on level 1 in $Z^-$ with the same first coordinate; and if $(1,1)\in Z$ then $(1,1)\in Z^-$ as well. The
$W$ and $W^+$ are simply set to be $U''$.
The summation in \eqref{eq:UVlongdecom5} is over $Z$ such that $(U,W^+)$, $(W^+,W)$, $(W,Z)$, $(Z,Z^-)$ and $(Z^-,V)$ are all endpoint pairs.
Note that due to the assumption that $U''=W^+=W$ and the relationship between $Z$ and $Z^-$, these five conditions reduce to two: that $(W,Z)$ and $(Z^-,V)$ are endpoint pairs.
Furthermore, the summand in the right-hand side of \eqref{eq:UVlongdecom} reduces to $D''\big[U''\to Z\big] D''\big(Z\to Z^-] D''\big(Z^-\to V]$. Display box $3$ of Figure \ref{fig:threelevel} illustrates the choices of $W,W^+,Z,Z^-$ in this decomposition. The same reasoning as for the first decomposition \eqref{eq:UVlongdecom} applies in justifying its validity.

The decomposition in \eqref{eq:UVlongdecom5} has isolated the dependence on $D''_2$ and $D''_3$ in the right-hand side summand. Indeed, since $D''\big[U''\to W^+)=D''\big[W^+\to W)=1$, they do not depend on $D''_2$ or $D''_3$, and clearly $D''\big(Z\to Z^-]$ and $D''\big(Z^-\to V]$ only depend on $D''_1$. This leaves $D''\big[W\to Z\big]$ as the only term which involves $D''_2$ and $D''_3$. We claim that by applying \eqref{eq:step2idredux} with $r=2$, $m=2$, and $\tilde D = D''$, we can show that
\begin{equation}\label{eq:Dpp}
D''\big[W\to Z\big] =\big(\Tau_{2,2} D''\big)\big[\ua_{2,2} W\to Z\big].
\end{equation}
 This equality requires a bit of explanation. If $(1,1)\notin W$ (i.e., if $(1,1)\notin U$) then $W$ may only include points on level $3$ and $Z$ may only include points on level $2$. In this case, we may directly apply \eqref{eq:step2idredux} to conclude the equality \eqref{eq:Dpp}. On the other hand, if $(1,1)\in W$, it must also be in $Z$. Thus, if we write $\hat{W}$ and $\hat{Z}$ for $W$ and $Z$ with the point $(1,1)$ removed, then
 $D''\big[W\to Z\big] = D''\big[(1,1)\big]D''\big[\hat{W}\to \hat{Z}\big]$. We may apply \eqref{eq:step2idredux} (with $r=2$, $m=2$, and $\tilde D = D''$) to show that $D''\big[\hat{W}\to \hat{Z}\big] = \big(\Tau_{2,2} D''\big)\big[\ua_{2,2} \hat{W}\to \hat{Z}\big]$.
 Finally, since $\big(\Tau_{2,2}D''\big)_1=D''_1$, we have that $D''\big[(1,1)\big]=\big(\Tau_{2,2}D''\big)\big[(1,1)\big]$, which proves \eqref{eq:Dpp}.

 Using the fact that $\big(\Tau_{2,2}D''\big)_1=D''_1$ again, we may replace $D''$ by $D'''=\Tau_{2,2}D''$ in all of the other terms in the summand of \eqref{eq:UVlongdecom5} without changing their values (they are all either constant or only depend on $D''_1$).
Moreover, since $D''\big[U''\to W^+)=D''\big[W^+\to W)=1$, we may replace these terms by $D'''\big[\ua_{2,2}U''\to \ua_{2,2} W^+)=D''\big[\ua_{2,2} W^+\to \ua_{2,2} W)=1$.
This implies that, denoting $D'''= \Tau_{2,2} D''$,  $U'''=\ua_{2,2}U''$, $W'^+=\ua_{2,2}W^+$ and $W'=\ua_{2,2}W$,
\begin{equation*}
D\big[U\to V\big] =  \sum_{Z} D'''\big[U'''\to  W'^+)D'''\big[W'^+\to W') D'''\big[W'\to Z\big] D'''\big(Z\to Z^-] D'''\big(Z^-\to V],
\end{equation*}
where we now assume that $U'''= W'^+=W'$ and where the sum over $Z$ is such that $(W',Z)$ and $(Z^-,V)$ are endpoint pairs (note that the condition that $(W',Z)$ is an endpoint pair imposes the same restriction on $Z$ as the condition that $(W,Z)$ is an endpoint pair). This right-hand side is a decomposition of $D'''\big[U'''\to V\big]$ in the same spirit as \eqref{eq:UVlongdecom5}, hence we have shown (going back to $\Tau_{2,2}\Tau_{1,1}\Tau_{1,2}D$ in place of $D'''$ and $\ua_{2,2}\ua_{1,1}\ua_{1,2}U$ in place of $U'''$)
\begin{equation}\label{eq:UVlongdecom6}
\big(\Tau_{1,1}\Tau_{1,2}D\big)\big[\ua_{1,1}\ua_{1,2} U \to V\big] = \big(\Tau_{2,2}\Tau_{1,1}\Tau_{1,2}D\big)\big[\ua_{2,2}\ua_{1,1}\ua_{1,2} U \to V\big].
\end{equation}

Combining together \eqref{eq:UVlongdecom2}, \eqref{eq:UVlongdecom4} and \eqref{eq:UVlongdecom6} proves \eqref{eq:dinv} for $n=3$ and general $k$.

Now we turn to address the general $n$ case. This proceeds by a sequence of $n\choose 2$ replacements through which we transition from $D$ to $\mathcal{W}D$. The following lemma is the key to this scheme. Figure \ref{fig:LemDec} may be helpful in understanding the hypotheses in the statement of the lemma.

\begin{lemma}\label{lem:inductive}
Fix any $m\in \llbracket 1,n-1\rrbracket$, any $n$ weakly increasing integers $r_1\leq \cdots  \leq r_n$ with (for some $r$) $r=r_m=r_{m+1}<r_{m+2}$ (if $m=n-1$ then by convention set $r_{n+1}=+\infty$), and any $n$ functions $D=(D_1,\ldots, D_n)$ such that $D_i:\mathbb{Z}_{\geq r_i} \to (0,\infty)$, $i\in \llbracket 1,n\rrbracket$. Fix any $k\in \mathbb{Z}_{\geq 1}$ and any $k$-tuple of points $U$ such that every point in $U$ is either of the form  $(u,n)$ for some $u\geq r_n$, or of the form $(r_i,i)$ for some $i\in \llbracket 1,n\rrbracket$ such that $r_i<r_{i+1}$. Finally, fix another $k$-tuple of points $V=\big((v_i,1)\big)_{i\in \llbracket 1,k\rrbracket}$ such that $(U,V)$ form an endpoint pair. Then
\begin{equation}\label{eq:DUVLEMMA}
D\big[U\to V\big] = \big(\Tau_{r,m}D\big)\big[\ua_{r,m} U\to V\big].
\end{equation}
\end{lemma}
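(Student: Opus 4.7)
The plan is to carry out one decomposition of $D[U \to V]$ of the same character as the three decompositions executed in the $n = 3$ case (cf.\ \eqref{eq:UVlongdecom}, \eqref{eq:UVlongdecom3}, \eqref{eq:UVlongdecom5}), chosen to isolate the $D_m, D_{m+1}$ dependence in a single factor to which the Step 2 identity \eqref{eq:step2idredux} applies. The active layer for $\Tau_{r,m}$ consists of levels $m$ and $m + 1$; since $r_m = r_{m+1} = r$, both have the same leftmost column $r$, while level $m + 2$ (if present) starts strictly to the right. The hypothesis forbids corners of $U$ at level $m$, so each path component of a multipath $\pi \in U \to V$ either visits the active layer (when its starting point is on level $n$, at the corner $(r, m+1)$, or at any corner on a level $\geq m + 2$) or entirely avoids it (when it starts at a corner on a level $< m$).

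For each $\pi \in U \to V$, record intermediate tuples $(W^+, W, Z, Z^-)$ tracking, for each path component visiting the active layer, the first vertex on level $m + 1$ (either its starting point, or the first vertex after crossing up from level $m + 2$), the last vertex on level $m + 1$ before ascending to level $m$, the first vertex on level $m$, and the last vertex on level $m$ before ascending to level $m - 1$; trivial placeholders are used for components not visiting the active layer, and for the boundary cases $m = 1$ or $m = n - 1$. Grouping multipaths by these intermediate tuples and then summing yields
\begin{equation}\label{eq:LEMDEC}
D[U \to V] = \sum D[U \to W^+) \, D[W^+ \to W) \, D[W \to Z] \, D(Z \to Z^-] \, D(Z^- \to V],
\end{equation}
whose validity follows by exactly the same reasoning used to justify \eqref{eq:UVlongdecom}.

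The only factor on the right-hand side of \eqref{eq:LEMDEC} depending on $D_m$ or $D_{m+1}$ is $D[W \to Z]$, since $W$ and $Z$ lie on levels $m + 1$ and $m$ respectively. By \eqref{eq:step2idredux} applied to $\tilde D = D$ with the given $r$ and $m$, this factor equals $(\Tau_{r,m} D)[\ua_{r,m} W \to Z]$. The remaining four factors involve only $\{D_i : i \ne m, m+1\}$ and are unchanged under $D \mapsto \Tau_{r,m} D$, which only affects slots $m$ and $m + 1$. Reassembling the sum produces precisely the analogous decomposition of $(\Tau_{r,m} D)[\ua_{r,m} U \to V]$, establishing \eqref{eq:DUVLEMMA}.

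The main obstacle is verifying compatibility of the decomposition with the $\ua_{r,m}$ shift at the only corner of $U$ that can be affected, namely $(r, m + 1)$. When $(r, m+1) \in U$, the corresponding path starts there, cannot move leftwards, and therefore contributes $W^+_i = (r, m+1)$, with the factor $D[W^+ \to W)$ recording only horizontal travel on level $m + 1$; under $\ua_{r,m}$ this corner descends to $(r, m)$, and one must check that the admissible ranges of $W$, $Z$ and the trivial segment $D[W^+ \to W)$ transform consistently so that both sides of \eqref{eq:LEMDEC} produce matching decompositions after the shift. This is the general-$n$ counterpart of the dedicated treatment of the $(1,1)$ corner in the third $n = 3$ decomposition; carrying it out cleanly for arbitrary configurations of corners and layer indices is essentially the only new work relative to the $n = 3$ case already treated in Step 3.
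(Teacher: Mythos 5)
Your overall strategy is the paper's (one decomposition isolating the two active levels, apply the Step~2 identity \eqref{eq:step2idredux} to the middle factor, reassemble), but the decomposition you actually specify does not accomplish what you claim. You place all four intermediate tuples on the active levels themselves: $W^+$ and $W$ are the first and last vertices on level $m+1$, $Z$ and $Z^-$ the first and last vertices on level $m$. With the half-open conventions \eqref{eq:clopen}, the horizontal run along level $m+1$ then lives in $D\big[W^+\to W\big)$ (for a run from column $w^+$ to column $w$ this factor equals $D_{m+1}(w-1)/D_{m+1}(w^+-1)$) and the run along level $m$ lives in $D\big(Z\to Z^-\big]$; both depend nontrivially on $D_{m+1}$, respectively $D_m$, and neither is invariant under $D\mapsto \Tau_{r,m}D$, since $(\Tau_{r,m}D)_{m+1}=D_m\otimes D_{m+1}\neq D_{m+1}$ in general. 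Meanwhile your middle factor $D\big[W\to Z\big]$ degenerates to the product of two vertically adjacent vertex weights. So your pivotal claim --- that $D\big[W\to Z\big]$ is the only factor depending on $D_m$ or $D_{m+1}$ and that the other four factors are unchanged under $\Tau_{r,m}$ --- is false for your choice of intermediate points (your own remark that $D\big[W^+\to W\big)$ ``records only horizontal travel on level $m+1$'' is exactly the problem), and the reassembly step collapses. The repair is the choice made in \eqref{eq:bigdecomp}: take $W$ to be the entry point into level $m+1$, $W^+$ the vertically adjacent exit point on level $m+2$, $Z$ the exit point from level $m$, and $Z^-$ the vertically adjacent entry point on level $m-1$ (with points of $U$ at level $\leq m+1$, resp.\ $\leq m-1$, carried along unchanged into $W,W^+$, resp.\ $Z,Z^-$); then the half-open conventions push every level-$m$ and level-$(m+1)$ weight into the single closed factor $D\big[W\to Z\big]$, and the remaining factors genuinely involve only $D_i$ with $i\neq m,m+1$.

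Second, you identify the compatibility of the decomposition with $\ua_{r,m}$ as ``the main obstacle'' and then leave it as something ``one must check.'' That check is the substance of the lemma and has to be carried out: (i) split into the cases $(r,m+1)\in U$ and $(r,m+1)\notin U$, noting in the first case that $(r,m+1)$ belongs simultaneously to $U$, $W$ and $W^+$, so lowering it to $(r,m)$ leaves the half-open factors $D\big[U\to W^+\big)$ and $D\big[W^+\to W\big)$ unchanged; (ii) when invoking \eqref{eq:step2idredux} for $D\big[W\to Z\big]$, first factor out the lower-level corner points of $U$ common to $W$ and $Z$ (your ``trivial placeholders''), since \eqref{eq:step2idredux} only applies to two-level endpoint pairs --- these single-point weights are then invariant because $\Tau_{r,m}$ fixes $D_i$ for $i\neq m,m+1$ (this is the general-$n$ analog of the $(1,1)$-corner argument in the $n=3$ case); and (iii) verify that the transformed sum, with the endpoint-pair constraints on the summation unchanged, is precisely the analogous decomposition of $\big(\Tau_{r,m}D\big)\big[\ua_{r,m}U\to V\big]$, as in \eqref{eq:bigdecomp2}. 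Without (i)--(iii), and with the decomposition as you chose it, the proof is not complete.
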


Let us assume this lemma for the moment and conclude the proof of \eqref{eq:dinv}. The left-hand side of \eqref{eq:dinv} fits into the setup of Lemma \ref{lem:inductive} with $r_1=\cdots = r_{n}=1$ and hence $U=\big((u_i,n)\big)_{i\in \llbracket 1,k\rrbracket}$. Now (as prescribed by the definition of $\mathcal{W}$) sequentially apply the lemma with $r=1$ and $m=n-1$ through $1$, and then $r=2$ and $m=n-1$ through $2$, and so forth until finally $r=n-1$ and $m=n-1$. Each application amounts to applying the relevant $\Tau_{r,m}$ to the $D$ functions (or rather, to the image of the $D$ functions under previous application of $\Tau$ operators) and the relevant $\ua_{r,m}$ operator to the $U$ points (or rather, to the image of the $U$ points under previous application of $\ua$ operators). It is clear that the output of each step in this sequence is suitable for applying the next step of the lemma. Recalling the definition of $\mathcal{W}$ and $\ua$ in terms of the $\Tau_{r,m}$ and $\ua_{r,m}$, we find that this procedure shows that
\begin{equation}
D\big[U\to V\big] = \big(\mathcal{W}D\big)\big[\ua U\to V\big],
\end{equation}
precisely as claimed in \eqref{eq:dinv}. Thus, to complete this step  we are left to prove Lemma \ref{lem:inductive}.

\begin{figure}[t]
  \captionsetup{width=.8\linewidth}
  \includegraphics[width=6in]{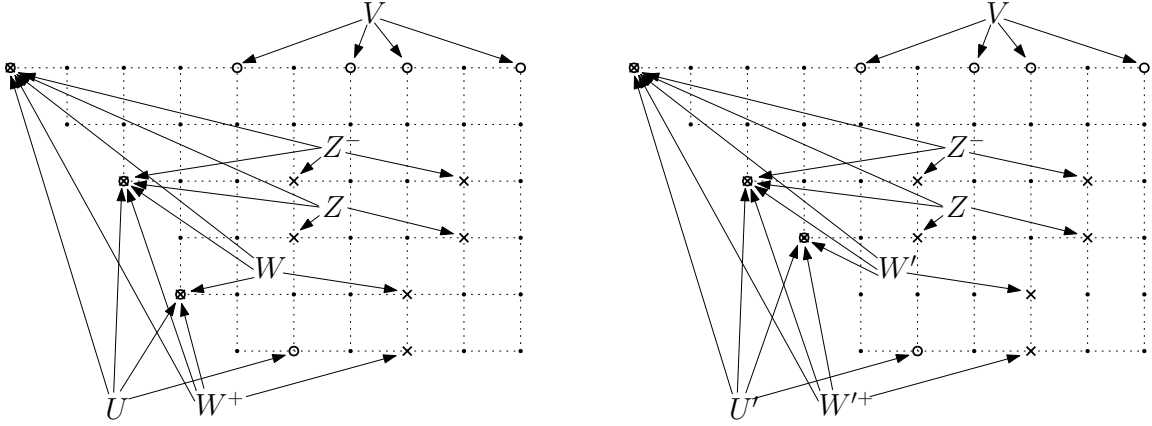}
  \caption{An illustration of the decomposition (\ref{eq:bigdecomp}) in Lemma \ref{lem:inductive}. On the left, we have $r_1=1, r_2=2, r_3=3, r_4=4, r_5=4, r_6=5$ so the domain of $D$ satisfies the hypotheses of Lemma \ref{lem:inductive} with $m=4$ and $r=4$. On the right, all of the $r_i$ remain the same except $r_5=5$ now. On the left, the starting points $U$ satisfy the hypotheses of Lemma \ref{lem:inductive}. Notice that graphically, the hypothesis on $U$ corresponds to having every point at the ``bottom'' of the domain of $D$ (no other points in the domain of $D$ below the points of $U$). Also shown in the figure are the choices of $W^+, W, Z,$ and $Z^-$ for the decomposition (\ref{eq:bigdecomp}) on the left and (\ref{eq:bigdecomp2}) on the right.}
  \label{fig:LemDec}
\end{figure}

\begin{proof}[Proof of Lemma \ref{lem:inductive}]
We appeal to the following decomposition which simultaneously captures all of the decompositions used in the $n=3$ proof given earlier. Assuming the hypotheses on the $r_i$, $U$  and $V$ from the statement of Lemma \ref{lem:inductive}, we have that
\begin{equation}\label{eq:bigdecomp}
D\big[U\to V\big] =  \sum_{W,Z} D\big[U\to  W^+)D\big[W^+\to W) D\big[W\to Z\big] D\big(Z\to Z^-] D\big(Z^-\to V],
\end{equation}
where all sets $U,W^+,W,Z,Z^-,V$ contain $k$ points and additionally satisfy
\begin{itemize}
\item $W$ contains all points in $U$ of level $\leq m+1$, and for each point in $U$ of level $>m+1$, there is a unique point in $W$ of level $m+1$;
\item $W^+$ contains all points in $U$ of level $\leq m+1$, and for each point in $U$ of level $>m+1$, there is a unique point in $W^+$ of level $m+2$;
\item $W\setminus U$ and $W^+\setminus U$ are in the following correspondence: for each $(w,m+1)\in W\setminus U$ there is a point $(w,m+2)\in W^+\setminus U$;
\item If $m=1$, then $Z=Z^-=V$.
\item If $m\geq 2$, $Z$ contains all points in $U$ of level $\leq m-1$, and for each point in $U$ of level $\geq m+1$, there is a unique point in $Z$ of level $m$;
\item If $m\geq 2$, $Z^+$ contains all points in $U$ of level $\leq m-1$, and for each point in $U$ of level $\geq m+1$, there is a unique point in $Z^+$ of level $m-1$;
\item $Z\setminus V$ and $Z^-\setminus U$ are in the following correspondence: for each $(z,m)\in Z\setminus U$ there is a point $(z,m-1)\in Z^-\setminus U$;
\item The sum over $W$ and $Z$ is restricted so as to satisfy the above conditions as well as the condition that $(U,W^+)$, $(W^+,W)$, $(W,Z)$, $(Z,Z^-)$, and $(Z^-,V)$ are all endpoint pairs.
\end{itemize}
Note that the hypotheses on the $r_i$ and $U$  from the statement of Lemma \ref{lem:inductive} imply that there are no points in $U$ of level $m$ (since we have assumed $r_m=r_{m+1}$ and all points of the form $(r_i,i)\in U$ have $r_i<r_{i+1}$). This implies that the sets above all have exactly $k$ points. \eqref{eq:bigdecomp} follows by expanding each term on the right-hand side and then verifying that the resulting summation puts each term in bijection with a multipath with the corresponding weight.

The key fact that is evident from the decomposition \eqref{eq:bigdecomp} is that the only term on the right-hand side inside the summation over $W$ and $Z$ which depends on the functions $D_m$ and $D_{m+1}$ is $D\big[W\to Z\big]$. We can use this fact along with \eqref{eq:step2idredux} (with $\tilde D=D$ and $r$ and $m$ as specified in the  statement of Lemma \ref{lem:inductive}) to show that
\begin{equation}\label{eq:WZrm}
D\big[W\to Z\big] = \big(\Tau_{r,m}D\big)\big[\ua_{r,m}W\to Z\big].
\end{equation}
Now, there are two cases to consider: when $(r,m+1)\in U$ or when $(r,m+1)\notin U$.

When $(r,m+1)\in U$, we also have $(r,m+1)\in W$ and $(r,m+1)\in W^+$. If we define $U'=\ua_{r,m}U$, $W'=\ua_{r,m}W$, $W'^+=\ua_{r,m}W^+$ and $D'=\Tau_{r,m}D$, then it follows from \eqref{eq:WZrm} and \eqref{eq:bigdecomp} that
\begin{equation}\label{eq:bigdecomp2}
D\big[U\to V\big] =  \sum_{W',Z} D'\big[U'\to  W'^+)D'\big[W'^+\to W') D'\big[W'\to Z\big] D'\big(Z\to Z^-] D'\big(Z^-\to V].
\end{equation}
We have used the fact that the only term in  \eqref{eq:bigdecomp} which depends on the functions $D_m$ and $D_{m+1}$ is $D\big[W\to Z\big]$, and hence replacing $D$ by $D'$ does not effect the other terms. We have also used the fact that since $(r,m+1)$ is common to $U, W$ and $W^+$, replacing it by $(r,m)$ does not change the value of the first two terms in the summand in \eqref{eq:bigdecomp}.

When $(r,m+1)\notin U$, we also have $(r,m+1)\notin W$ and $(r,m+1)\notin W^+$. In that case, $W'=\ua_{r,m}W=W$ and likewise $U'=\ua_{r,m}U=U$ and $W'^+=\ua_{r,m}W^+=W^+$. Thus, setting $D'=\Tau_{r,m}D$, by the same reasoning as above we find that the decomposition \eqref{eq:bigdecomp2} holds.

It remains to observe that the right-hand side of \eqref{eq:bigdecomp2} is a decomposition of $D'\big[U'\to V\big]$. The only difference relative to the decomposition in \eqref{eq:bigdecomp} is that $U$ is replaced by $U'$ and that $W'$ and $W'^+$ now contain points from $U'$ of level $\leq m$. Recalling that $D'=\Tau_{r,m}D$ and $U'=\ua_{r,m}U$ we see that we have proved \eqref{eq:DUVLEMMA} and hence the lemma.
\end{proof}

This completes the proof of the third step, and hence the proof of Theorem \ref{thm:dinv}.
\end{proof}

\subsection{Relation to geometric RSK correspondence}\label{sec:dRSK}
The operator $\mathcal{W}$ defined via the (discrete geometric) Pitman transform in \eqref{eq:SW} is closely related to the geometric RSK correspondence. We recall the geometric RSK correspondence in Section \ref{sec:recall} and state the relationship with  $\mathcal{W}$ in Section \ref{sec:RSKPit}. Finally, in Section \ref{sec:inv} we consider a special choice for the functions $D$ (related to inverse-gamma random variables) and briefly recall how, via \cite{COSZ}, this gives rise to a Markovian structure for $\big(\mathcal{W}D\big)(t)$.

\subsubsection{Recalling the geometric RSK}\label{sec:recall}
We recall the geometric RSK correspondence (gRSK) as defined in \cite[Definitions 2.1 and 2.2]{COSZ} (see also \cite{Kir,NY}).

\begin{defn}\label{NYdef}
Fix any $N\in \mathbb{Z}_{\geq 1}$.
Let $1\leq \ell\leq N$. Consider two {\it words} $\xi=(\xi_{\ell},\ldots,\xi_N)$ and $b=(b_\ell,\ldots, b_N)$ with strictly positive real entries. {\it Geometric row insertion} of the word $b$ into the word $\xi$  transforms the pair $(\xi,b)$ into a new pair $(\xi',b')$ where $\xi'=(\xi'_{\ell},\ldots, \xi'_N)$ and $b'=(b'_{\ell+1},\ldots, b'_N)$ (i.e., $b'$ has one fewer entry and the index starts at $\ell+1$ instead of $\ell$). The transformation is notated and defined as follows:
\begin{equation}
\begin{array}{ccc}
& b & \\
\xi & \cross & \xi' \\
& b'
\end{array}
\quad
\textrm{where} \quad
\begin{cases}
\xi'_\ell=b_\ell\xi_\ell, &\\[4pt]
\xi'_k =b_k(\xi_{k-1}' + \xi_k), & \ell+1\le k\le N\\[5pt]
b'_{k} = b_k \ddd\frac{ \xi_{k}\xi'_{k-1}}{\xi_{k-1}\xi'_k}, & \ell+1\leq k\leq N.
\end{cases}
\label{g-row-ins}\end{equation}
If $\ell=N$, the output word $b'$ is empty.  
In addition to $\xi\in(0,\infty)^{N-\ell+1}$ we admit the case
$\xi=(1,0,\dotsc,0)$.   This will correspond to row insertion into an initially empty word.
For $k\geq 1$ we denote such a word as  $e^{(k)}_1=(1,0,\dotsc,0)$, where $k$ is the total number of coordinates for the vector. The notation and  definition are extended so that
\begin{equation}
\begin{array}{ccc}
& b & \\
e_1^{(N-\ell+1)} & \cross & \xi'  \\[3pt]
\end{array}
\quad
\textrm{where} \quad  \xi'_{k}=\prod_{i=\ell}^{k} b_{i},  \quad \ell\le k\le N.
\label{es-row-ins}
\end{equation}
This is consistent with \eqref{g-row-ins} except that   output $b'$ is not defined and hence
not displayed in the diagram above.

For $N$ fixed, consider a semi-infinite array of strictly positive real numbers $m=\big(m_{j,n}: j\in \llbracket 1,N\rrbracket, n\geq 1\big)$. For integers $1\leq a\leq b\leq N$ and $1\leq c\leq d$, we write $m_{\llbracket a,b\rrbracket,\llbracket c,d\rrbracket}:=\big(m_{j,n}: j\in \llbracket a,b\rrbracket, n\in \llbracket a,b\rrbracket\big)$ for the corresponding subarray. In particular we will generally fix $N$ in which case we denote the $n^{th}$ row of $m$ by $m_{\llbracket 1,N\rrbracket, \llbracket n\rrbracket}=(m_{1,n},\ldots, m_{n,N}\big)$ and the first $n$ rows by $m_{\llbracket 1,N\rrbracket,\llbracket 1,n\rrbracket}$. See Figure \ref{fig:marray} for an illustration of this notation.

\begin{figure}[ht]
  \captionsetup{width=.8\linewidth}
  \includegraphics[width=2.5in]{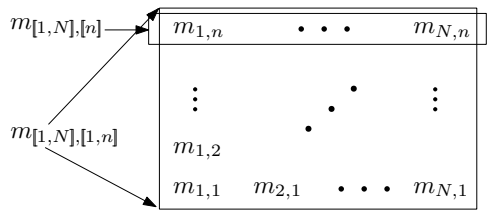}
  \caption{The first $n$ rows and $N$ columns of the array $m$.}
  \label{fig:marray}
\end{figure}

The {\it geometric RSK} correspondence is a bi-rational map between $m_{\llbracket 1,N\rrbracket,\llbracket 1,n\rrbracket}$ and another $N$ by $n$ matrix of strictly positive real numbers. As we will explain in what follows, under the gRSK, $m_{\llbracket 1,N\rrbracket,\llbracket 1,n\rrbracket}$  maps to a pair of {\it geometric} Young Tableaux  $(P,Q)$ of the same {\it shape}. Let us start by defining the $P$-geometric Young tableaux which is a map from $m_{\llbracket 1,N\rrbracket,\llbracket 1,n\rrbracket}$ to an array of the form
\begin{equation}\label{eq:zz}
z(n) = \big(z_{k,\ell}: 1\leq \ell\leq k\leq N\textrm{ and } \ell\in \llbracket 1,N\wedge n\rrbracket\big).
\end{equation}
When $n\geq N$, $z(n)$ is a triangular array, and when $n\leq N$ it is a trapezoidal array.
For $\ell\in \llbracket 1,N\wedge n\rrbracket$, denote by $z_\ell(n) = \big(z_{\ell,\ell}(n),\ldots, z_{N,\ell}(n)\big)$ the $\ell$-th diagonal and denote by $\sh(z(n))=\big(z_{N,\ell}(n)\big)_{\ell\in\llbracket 1,n\wedge N\rrbracket}$ the bottom row of $z(n)$, which we call the {\it shape} of $z(n)$.  See Figure \ref{fig:arrays} for an illustration of this notation.

\begin{figure}[t]
  \captionsetup{width=.8\linewidth}
  \includegraphics[width=6in]{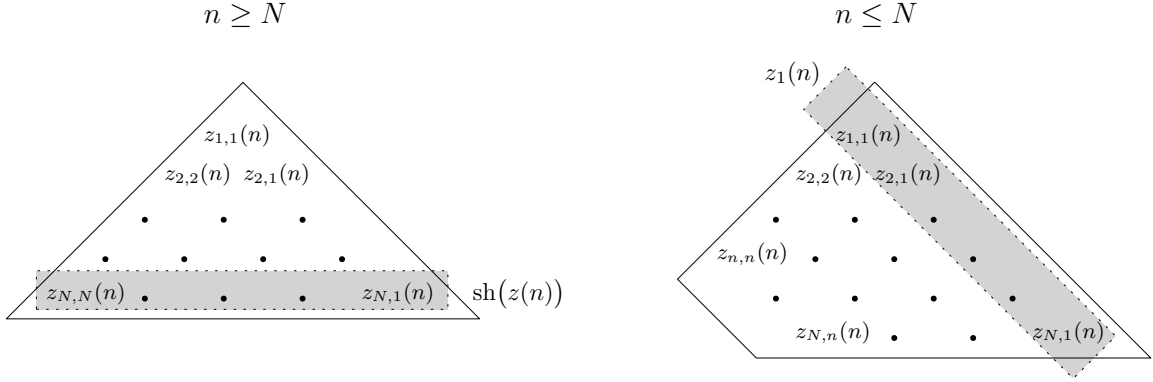}
  \caption{The array $z(n)$. When $n\geq N$  it is triangular and when $n\leq N$ it is trapezoidal. Also depicted on the left is the shape $\sh\big(z(n)\big)$ (i.e., bottom row), and on the right is the first diagonal $z_1(n)$.}
  \label{fig:arrays}
\end{figure}

%

We now describe how to construct the $P$ tableaux $z(n)$ from the matrix $m_{\llbracket 1,N\rrbracket,\llbracket 1,n\rrbracket}$ through insertion of the rows of $m_{\llbracket 1,N\rrbracket,\llbracket 1,n\rrbracket}$ into an empty tableaux. For each $n\in \mathbb{Z}_{\geq 1}$, let  $a_1(n) = m_{\llbracket 1,N\rrbracket,\llbracket n\rrbracket}$.  Then the mapping is given graphically on the left side of Figure \ref{fig:gRSKCOSZ} (the right side contains an example when $n=4$ and $N=3$). It is also possible to define this mapping through operators, though it is no more informative (and perhaps less so) than the graphical definition which we stick with.

\begin{figure}[h]
  \captionsetup{width=.8\linewidth}
  \includegraphics[width=6in]{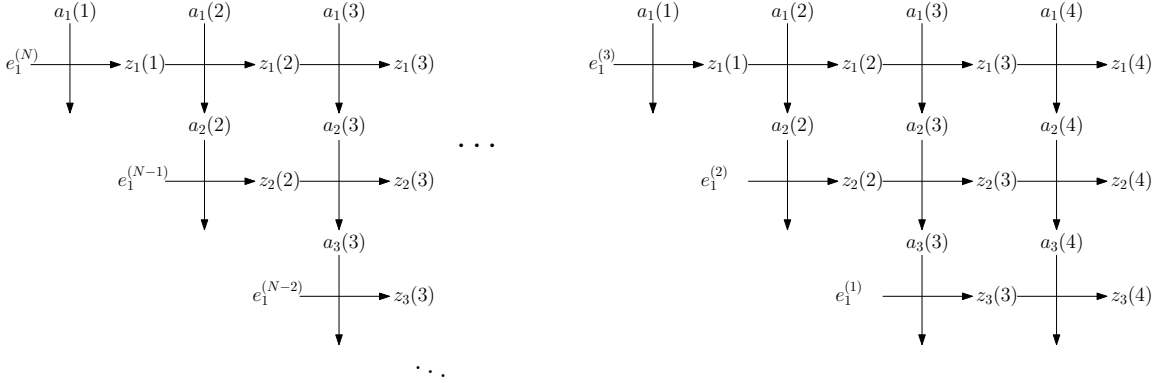}
  \caption{On the left: The matrix $m^{\llbracket 1,n\rrbracket}$ is inserted into an empty tableaux. The inputs on the top are specified by setting $a_1(n) =m_{\llbracket 1,N\rrbracket,\llbracket n\rrbracket}$. Everything else is computed sequentially (from top-left down and to the right) via the geometric row insertion. The output matrix $z(n)$ is read off from its diagonals $z_1(n),\ldots ,z_{n\wedge N}(n)$ on the right-side of the figure. On the right: the specific case when $n=4$ and $N=3$.}
  \label{fig:gRSKCOSZ}
\end{figure}

For later uses, let us call the above defined mapping $P$.
 The $Q$ tableaux is defined in terms of the sequence of shapes of $z(1),\ldots,z(n)$.
 Specifically,
\begin{equation}\label{eqPQdef}
P\big(m_{\llbracket 1,N\rrbracket,\llbracket 1,n\rrbracket}\big) :=z(n)\qquad
Q\big(m_{\llbracket 1,N\rrbracket,\llbracket 1,n\rrbracket}\big):= \big(\sh(z(t))\big)_{t\in \llbracket 1,n\rrbracket}.
\end{equation}
Figure \ref{fig:PQ} shows how the $P$ and $Q$ tableaux may be joined together to form another $n$ by $N$ matrix of strictly positive real numbers. Notice that the definition of the $Q$ tableaux implies that it is consistent as $n$ varies so that for $n'>n$, the first $n$ elements in the sequence of shapes that defines $Q\big(m_{\llbracket 1,N\rrbracket,\llbracket 1,n'\rrbracket}\big)$ equals $Q\big(m_{\llbracket 1,N\rrbracket,\llbracket 1,n\rrbracket}\big)$.

\begin{figure}[t]
  \captionsetup{width=.8\linewidth}
  \includegraphics[width=5.5in]{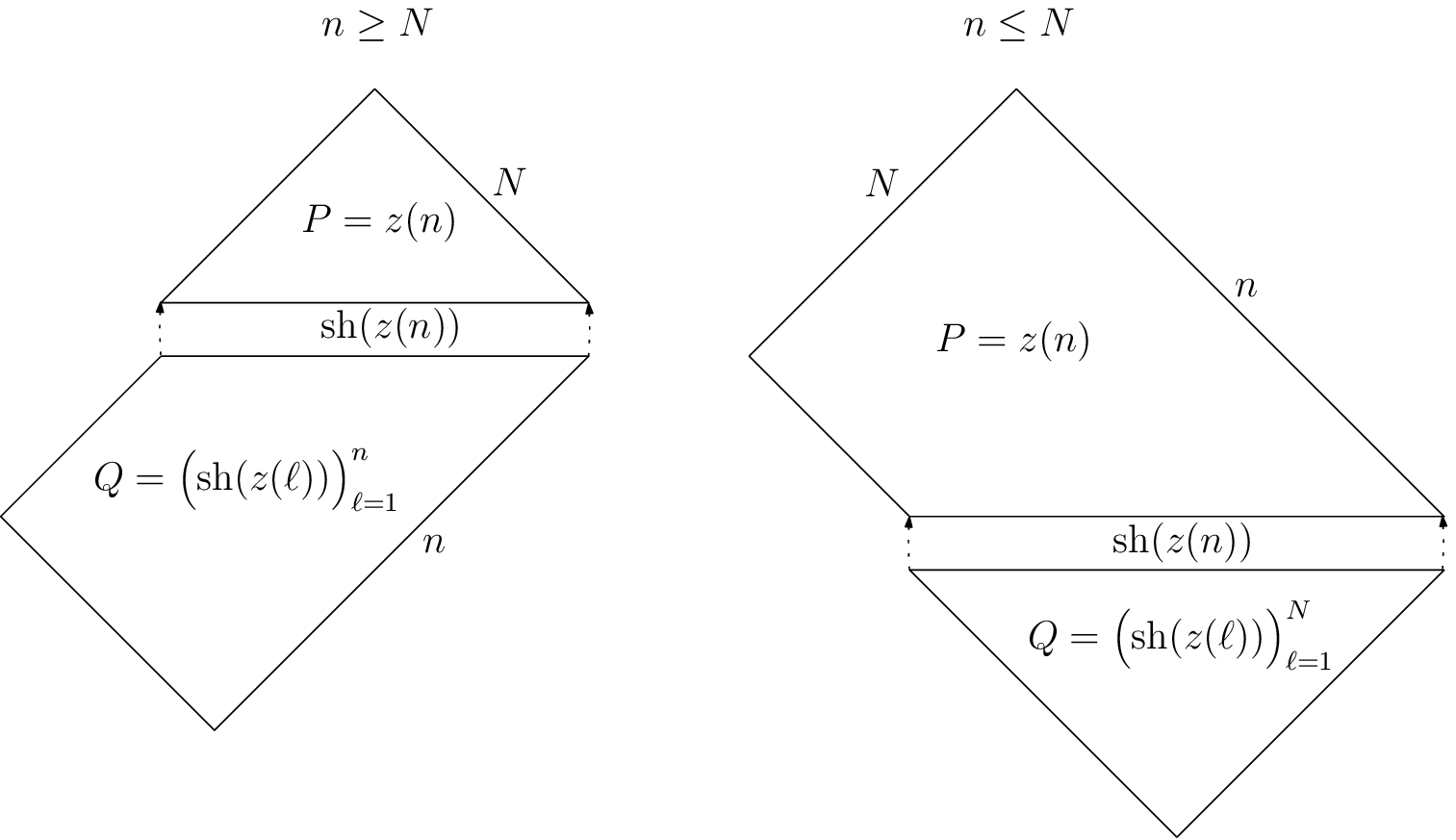}
  \caption{The $P$ and $Q$ tableaux agree in that the shape (bottom row) of the $P$ tableaux matches the top row of the $Q$ tableaux (where the shapes are listed from that of $z(n)$ down to $z(1)$). On the right this is shown when $n\geq N$ and on the right is the case $n\leq N$.}
  \label{fig:PQ}
\end{figure}
\end{defn}

Generalizing  Greene's theorem for the usual RSK correspondence, \cite{NY} showed that the geometric RSK also has a partition function interpretation. We will follow the exposition of \cite[Section 2.2]{COSZ} and define a mapping from the weight matrix $m_{\llbracket 1,N\rrbracket,\llbracket 1,n\rrbracket}$ to an array $\tilde{z}(n)$ of the same dimensions as $z(n)$ defined in \eqref{eq:zz}. The result quoted below in Proposition \ref{prop:rskequivs} shows that $z(n)$ and $\tilde{z}(n)$ are equal, and thus (in light of the way $\tilde{z}$ is defined) provides a  partition function  interpretation for $z(n)$.
\begin{defn}\label{def:path}
For $1\leq \ell\leq k\leq N$ let $\Pi^{\ell}_{n,k}$ denote the set of $\ell$-tuples $\pi=(\pi,\ldots, \pi_\ell)$ of non-intersecting lattice paths in $\ZZ^2$ with the property that for $1\leq r\leq \ell$, $\pi_r$ is a lattice path from $(1,r)$ to $(n,k+r-\ell)$. The term lattice path means that between nearest-neighbor lattice points in $\ZZ$, the path either takes a unit step up or right; the term non-intersecting means that the paths do not touch, even at lattice points. For any $\ell$-tuple of non-intersecting lattice paths $\pi = (\pi_1,\ldots, \pi_\ell)$, define their {\it weights} to be
$$
wt(\pi) = \prod_{r=1}^{\ell} \prod_{(i,j)\in \pi_r} m_{i,j}.
$$
For $1\leq \ell\leq k\leq N$ let
$$
\tau_{k,\ell}(n) = \sum_{\pi\in \Pi^{\ell}_{n,k}} wt(\pi).
$$
For $0\leq n<\ell<k\leq N$ the set $\Pi^{\ell}_{n,k}$ is empty and hence we set that $\tau_{k,\ell}(n)=0$ in that case. When $\ell=k$, there exists only a single $\pi\in \Pi^{\ell}_{n,k}$. In fact, whenever $0\leq n<\ell\leq k\leq N$, the set $\Pi^{\ell}_{n,k}$ is only non-empty in the case that $k=\ell$. In otherwords,
$$
\tau_{k,\ell}(n) = \delta_{k,\ell}\tau_{k,n}(n)\quad \textrm{whenever}\quad 0\leq n<\ell\leq k\leq N,
$$
where $\delta_{k,\ell}$ is the Kronecker delta function. Finally, for $\ell=0$ we adopt the convention that $\tau_{k,0} = 1$ for $1\leq k\leq N$.

Having defined the multi-path partition function $\tau_{k,\ell}(n)$, we can now  define the array $\tilde{z}(n)$ by the relation that for all indices $k$ and $\ell$,
$$
z_{k,1}(n)\cdots z_{k,\ell}(n) = \tau_{k,\ell}(n).
$$
\end{defn}

\begin{prop}[Proposition 2.5 of \cite{COSZ}]\label{prop:rskequivs}
The mapping $m_{\llbracket 1,N\rrbracket,\llbracket 1,n\rrbracket}\mapsto z(n)$ from Definitions \ref{NYdef} and $m_{\llbracket 1,N\rrbracket,\llbracket 1,n\rrbracket}\mapsto \tilde{z}(n)$  from Definition \ref{def:path} are the same, i.e., $z(n)=\tilde{z}(n)$.
\end{prop}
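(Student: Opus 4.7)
The plan is to prove $z(n)=\tilde z(n)$ by induction on $n$, matching the geometric row-insertion recursions with corresponding recursions for the path partition functions $\tau_{k,\ell}(n)$. Since both arrays are indexed identically and the gRSK construction is organized diagonal-by-diagonal via row insertion, it suffices to verify that each $z_{k,\ell}(n)$ produced by the insertion satisfies the defining relation $z_{k,1}(n)\cdots z_{k,\ell}(n)=\tau_{k,\ell}(n)$.

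The base case $n=1$ is immediate: inserting the first row of $m$ into the empty tableau via \eqref{es-row-ins} gives $z_{k,1}(1)=\prod_{i=1}^{k} b_{i}$ where $b$ is that row, which equals the weight of the unique element of $\Pi^{1}_{1,k}$ (the single straight segment), while higher diagonals are vacuous at $n=1$. For the inductive step on the first diagonal ($\ell=1$), the insertion formula \eqref{g-row-ins} yields a recursion of the form $z_{k,1}(n)=b_{k}\bigl(z_{k-1,1}(n)+z_{k,1}(n-1)\bigr)$ with the obvious boundary case at $k=1$. The same recursion holds for $\tau_{k,1}(n)$, obtained by classifying each single path in $\Pi^{1}_{n,k}$ according to its last step into the endpoint, so the induction gives $z_{k,1}(n)=\tau_{k,1}(n)$ directly.

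For higher diagonals $\ell\geq 2$, I would proceed by nested induction on $\ell$, showing $\prod_{j=1}^{\ell}z_{k,j}(n)=\tau_{k,\ell}(n)$. The key algebraic observation is that the bumped word in row insertion has the ratio form $b'_{k}=b_{k}\xi_{k}\xi'_{k-1}/(\xi_{k-1}\xi'_{k})$, so when one forms the product $z_{k,1}(n)\cdots z_{k,\ell}(n)$ the denominators telescope across successive diagonal insertions, yielding a clean recursion for this product in terms of the corresponding products at $(n-1,\ell)$ and $(n,\ell-1)$ together with a monomial in the freshly inserted row. On the partition-function side, the parallel recursion follows by decomposing each $\ell$-tuple in $\Pi^{\ell}_{n,k}$ according to how the topmost path enters the last column and splitting off that path. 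The main obstacle is verifying that this surgery preserves non-intersection of the remaining $(\ell-1)$-tuple and that the weight bookkeeping aligns with the algebraic telescoping; this is cleanest in the positive (multiplicative) Lindstr\"om--Gessel--Viennot framework developed in \cite{NY}, whose identity for $\tau_{k,\ell}(n)$ supplies exactly the recursion matching the algebraic side, closing the induction.
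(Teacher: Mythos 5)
The paper itself does not reprove this statement: its ``proof'' is a one-line citation to \cite[Proposition 2.5]{COSZ} (which in turn follows the methods of \cite{NY}), so your blind attempt at a self-contained induction is trying to do strictly more than the paper does. Your base case and the first diagonal are fine: $z_{k,1}(n)=m_{k,n}\big(z_{k-1,1}(n)+z_{k,1}(n-1)\big)$ does match the last-step decomposition of the single-path partition function $\tau_{k,1}(n)$.

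For $\ell\geq 2$, however, there is a genuine gap, and it sits exactly where you wave at \cite{NY}. If you carry out the telescoping you describe, using $z_{k,\ell}(n)=a_{\ell,k}(n)\big(z_{k-1,\ell}(n)+z_{k,\ell}(n-1)\big)$ together with $a_{\ell,k}(n)=m_{k,n}\,\frac{\tau_{k,\ell-1}(n-1)\,\tau_{k-1,\ell-1}(n)}{\tau_{k-1,\ell-1}(n-1)\,\tau_{k,\ell-1}(n)}$ (obtained by telescoping the bumping rule), the identity you must verify on the path side is \emph{not} a linear recursion for $\tau_{k,\ell}(n)$ in terms of the products at $(n-1,\ell)$ and $(n,\ell-1)$ plus a monomial; it is the quadratic exchange relation
\begin{equation}
\tau_{k,\ell}(n)\,\tau_{k-1,\ell-1}(n-1)\;=\;m_{k,n}\Big(\tau_{k,\ell-1}(n-1)\,\tau_{k-1,\ell}(n)+\tau_{k-1,\ell-1}(n)\,\tau_{k,\ell}(n-1)\Big),
\end{equation}
which mixes four distinct index pairs multiplied two at a time (for $\ell=1$ it degenerates to your linear recursion, which is why that case looks easy). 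Your proposed combinatorial step --- classifying the topmost path by how it enters the last column and ``splitting it off'' --- cannot produce such an identity: removing one path from a non-intersecting $\ell$-tuple does not factor its weight into a single-path term times an $(\ell-1)$-tuple term, because the non-intersection constraint couples the removed path to the rest, and a single-path surgery would in any case yield a linear, not quadratic, relation. Establishing the displayed identity is the actual content of the proposition; it requires either an LGV-type crossing involution or viewing the $\tau_{k,\ell}(n)$ as minors of a product of triangular weight matrices and invoking a Desnanot--Jacobi/Pl\"ucker relation --- precisely the machinery of \cite{NY} and \cite{COSZ} (and, incidentally, the same tools the present paper deploys in its Sections \ref{sec:dj} and \ref{sec:ny} for Theorem \ref{thm:dinv}). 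As written, your induction defers this essential step to an unspecified ``identity for $\tau_{k,\ell}(n)$'' and therefore does not constitute a proof.
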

\begin{proof}
In \cite{COSZ}, the first of these mappings is denoted (with $m$ replaced therein by $d$) by $\emptyset\leftarrow m^{[1]}\leftarrow m^{[2]}\leftarrow\cdots \leftarrow m^{[n]}$ and the second is denoted by $P_{n,N}(m^{[1,n]})$. Proposition 2.5 of \cite{COSZ} then provides the equality, following methods used in \cite{NY}.
\end{proof}

\subsubsection{Rewriting the geometric RSK via the geometric Pitman transform}\label{sec:RSKPit}
We now explain how to the geometric row insertion and hence geometric RSK can be rewritten in terms of the Pitman transform introduced earlier in Definition \ref{defn:dpit}. For the usual RSK correspondence, this is explained, for instance, in notes of Pei available at \url{https://toywiki.xyz/}. In lifting this to the geometric setting, there are some subtleties which arise.

Let us recall the Pitman transform from Definition \ref{defn:dpit}. For any $r\in \mathbb{Z}$ and any functions $f,g:\mathbb{Z}_{\geq r}\to (0,\infty)$ define functions $\big(g\odot f\big):\mathbb{Z}_{\geq r}\to (0,\infty)$ and  $\big(f\otimes g\big):\mathbb{Z}_{\geq r+1}\to (0,\infty)$ by
\begin{align}
\big(g\odot f\big)(x)&:= f(x)\cdot \sum_{m=r}^{x} \frac{g(m)}{f(m-1)} &&\textrm{for }x\geq r\\
\big(f\otimes g\big)(x)&:= g(x)\cdot \Big(\sum_{m=r}^{x} \frac{g(m)}{f(m-1)}\Big)^{-1} &&\textrm{for }x\geq r+1\\
\end{align}
where we have adopted the convention that $f(r-1)=1$ in the denominator when $m=r$. The operator $\Tau$ defined in \eqref{eq:taudef} can be encoded graphically as
\begin{figure}[H]
  \includegraphics[width=1in]{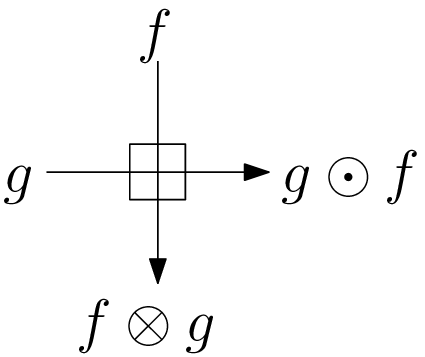}
\end{figure}
\noindent noting that the output on the right-side is still a function from $\mathbb{Z}_{\geq r}\to (0,\infty)$ whereas the bottom output is a function from $\mathbb{Z}_{\geq r+1}\to (0,\infty)$.

The next lemma rewrites the geometric row insertion in terms of the Pitman transform.
\begin{lemma}\label{lem:rskpit}
Let $\ell\in\llbracket 1,N\rrbracket$ and consider two words $\xi=(\xi_{\ell},\ldots,\xi_N)$ and $b=(b_\ell,\ldots, b_N)$ with strictly positive real entries. Compute $\xi'$ and $b'$ (where $\xi'=(\xi'_{\ell},\ldots, \xi'_N)$ and $b'=(b'_{\ell+1},\ldots, b'_N)$) via the geometric row insertion of $b$ into $\xi$ as depicted in the left-hand side in Figure \ref{fig:RSKPit}. Now define functions $\xi,B:\mathbb{Z}_{\geq \ell}\to (0,\infty)$ such that $\xi(k)=\xi_k$ for all $k\in \llbracket \ell,N\rrbracket$ and such that for all $k\in \llbracket \ell,N\rrbracket$ we have  $B(k) = \prod_{j=\ell}^k b_j$. The values of the functions $\xi$ and $B$ for arguments larger than $N$ do not matter and can be set to 1 for concreteness. Now, compute $\xi'$ and $B'$ via the Pitman transform as depicted in the right-hand side in Figure \ref{fig:RSKPit}. Then $\xi'(k)=\xi'_k$ for all $k\in \llbracket \ell,N\rrbracket$ and $B'(k) = \prod_{j=\ell+1}^k b'_j$ for all $k\in \llbracket \ell+1,N\rrbracket$. In other words, the geometric row insertion and Pitman transform produce the same result.
\begin{figure}[ht]
  \captionsetup{width=.8\linewidth}
  \includegraphics[width=2in]{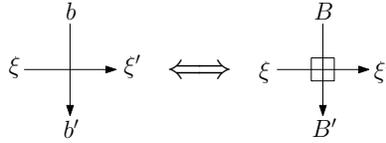}
  \caption{The equivalence between the geometric row insertion and the Pitman transform as shown in Lemma \ref{lem:rskpit}.}
  \label{fig:RSKPit}
\end{figure}

\end{lemma}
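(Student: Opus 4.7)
The plan is to expand the two outputs of the Pitman transform directly from the formulas defining $\odot$ and $\otimes$ and match them term-by-term against the outputs $\xi'_k$ and $b'_k$ of the geometric row insertion. Each of the two identities $\xi'(k)=\xi'_k$ and $B'(k)=\prod_{j=\ell+1}^{k}b'_j$ reduces to a short algebraic check.

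First, I read off from Figure \ref{fig:RSKPit} that $\xi'$ is the output produced by the $\odot$ operation (domain $\mathbb{Z}_{\geq\ell}$) and $B'$ by the $\otimes$ operation (domain $\mathbb{Z}_{\geq\ell+1}$), with the assignment of $\xi$ and $B$ to the two inputs of $\Tau$ chosen so that
\begin{equation*}
\xi'(k)\;=\;B(k)\sum_{m=\ell}^{k}\frac{\xi(m)}{B(m-1)},\qquad B'(k)\;=\;\frac{\xi(k)}{\sum_{m=\ell}^{k}\xi(m)/B(m-1)},
\end{equation*}
using the convention $B(\ell-1)=1$. These two expressions satisfy the book-keeping identity $B'(k)=\xi(k)\,B(k)/\xi'(k)$.

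The first identity $\xi'(k)=\xi'_k$ is proved by induction in $k$. The base case $k=\ell$ is direct: $\xi'(\ell)=B(\ell)\,\xi_\ell=b_\ell\xi_\ell=\xi'_\ell$. For the inductive step, split the $m=k$ term out of the sum and use $B(k)/B(k-1)=b_k$ to obtain $\xi'(k)=b_k\xi_k+b_k\,\xi'(k-1)$, which is precisely the row-insertion recursion $\xi'_k=b_k(\xi'_{k-1}+\xi_k)$ from \eqref{g-row-ins}. Hence $\xi'(k)=\xi'_k$ for all $k\in\llbracket\ell,N\rrbracket$.

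For the second identity, it suffices to verify that $B(k)\xi_k/\xi'_k$ equals $\prod_{j=\ell+1}^{k}b'_j$. The explicit formula $b'_j=b_j\xi_j\xi'_{j-1}/(\xi_{j-1}\xi'_j)$ makes the product telescope in all three factors:
\begin{equation*}
\prod_{j=\ell+1}^{k}b'_j\;=\;\frac{B(k)}{B(\ell)}\cdot\frac{\xi_k}{\xi_\ell}\cdot\frac{\xi'_\ell}{\xi'_k}\;=\;\frac{B(k)\,\xi_k}{\xi'_k},
\end{equation*}
where the final equality uses $\xi'_\ell=b_\ell\xi_\ell=B(\ell)\xi_\ell$. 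The only real obstacle in this proof is the bookkeeping: one must correctly identify the roles of $f$ and $g$ in $\Tau(f,g)$ from the orientation of Figure \ref{fig:RSKPit}, given the slightly asymmetric encoding $\xi(k)=\xi_k$ versus $B(k)=\prod_{j=\ell}^{k}b_j$. The asymmetry is forced by the Pitman recursion, in which the inserted word $b$ enters only through the ratios $B(k)/B(k-1)$ while the existing word $\xi$ enters through its own values. Once the identification is fixed, both halves of the lemma become one-line verifications.
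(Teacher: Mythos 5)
Your proof is correct and follows essentially the same route as the paper: expand the Pitman-transform outputs $\xi'=\xi\odot B$ and $B'=B\otimes\xi$, match $\xi'(k)$ with the row-insertion recursion $\xi'_k=b_k(\xi'_{k-1}+\xi_k)$ (the paper does this by solving the recursion explicitly, you by checking the recursion directly—an equivalent induction), and telescope $\prod_{j=\ell+1}^k b'_j$ using $b'_j=b_j\xi_j\xi'_{j-1}/(\xi_{j-1}\xi'_j)$ and $\xi'_\ell=B(\ell)\xi_\ell$, exactly as in the paper. No gaps.
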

\begin{proof}
Owing to \eqref{g-row-ins} we can compute explicitly $\xi'_k$ for all $k\in \llbracket \ell,N\rrbracket$:
\begin{equation}\label{eq:xiprimek}
\xi'_k = \sum_{j=\ell}^{k} b_j\cdots b_k \xi_j = B(k) \sum_{j=\ell}^{k}\frac{\xi(j)}{B(j-1)}= \big(\xi\odot B\big)(k)=\xi'(k).
\end{equation}
The first equality is easily shown by induction in $k$, the second is verified by substituting the definitions of the functions $B$ and $\xi$, the third equality is by definition of $\odot$ and the final equality is the definition of the function $\xi'$.

Again appealing to  \eqref{g-row-ins}, we know that $b'_j = b_j \frac{\xi_j\xi'_{j-1}}{\xi_{j-1}\xi'_j}$ for all $j\in \llbracket \ell+1,N\rrbracket$. Taking the product of this equality over $j\in \llbracket \ell+1,k\rrbracket$, we see that
$$
 \prod_{j=\ell+1}^k b'_j = \frac{B(k)}{B(\ell)} \frac{\xi(k)\xi'(\ell)}{\xi(\ell)\xi'(k)} = \frac{B(k)\xi(k)}{\xi'(k)} = B\otimes \xi(k)=B'(k).
$$
The first equality comes from the explicit formula for $b'_j$ recalled above and the definition of the functions $B, \xi$ and $\xi'$, the second equality comes cancelations by substituting the equality $\xi'(\ell)=B(\ell)\xi(\ell)$, the third equality uses the formula for
$\xi'(k)$ from \eqref{eq:xiprimek} (after the second equality sign), the fourth equality is by definition of $\otimes$ and the final equality is the definition of the function $B'$.
\end{proof}

From Lemma \ref{lem:rskpit}, we rewrite the entire gRSK correspondence via the Pitman transform.

\begin{figure}[t]
  \captionsetup{width=.8\linewidth}
  \includegraphics[width=5in]{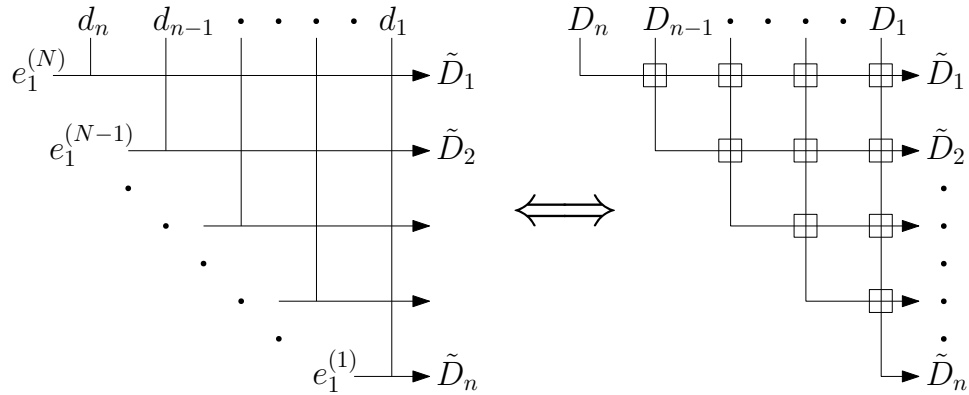}
  \caption{The equivalence between the gRSK correspondence (on the left) and Pitman transform (on the right) as shown in Lemma \ref{lem:fullrskmatch}. We have dropped all internal labels. The outgoing lines (coming out to the right and below each vertex) carries the output from each vertex to the next vertices. In the depiction of the gRSK correspondence, we have dropped the arrows coming out below the vertices in which the $e_1$ vectors are inserted. This is justified since that output is empty anyway.}
  \label{fig:fullRSKmatch}
\end{figure}

\begin{lemma}\label{lem:fullrskmatch}
Fix any $n,N\in \mathbb{Z}_{\geq 1}$ and vectors $d_1,\ldots, d_n\in (0,\infty)^N$ so that $d_i=\big(d_{i,1},\ldots, d_{i,N}\big)$. Compute vectors $\tilde{D}_1,\ldots,\tilde{D}_n$ via the gRSK correspondence as depicted in the left-hand side of Figure \ref{fig:fullRSKmatch}. Define functions $D_1,\ldots, D_n:\mathbb{Z}_{\geq 1}\to (0,\infty)$ such that for each $i\in \llbracket 1,n\rrbracket$ and for each $k\in \llbracket 1,N\rrbracket$ we have $D_i(k) = \prod_{j=1}^k d_{i,j}$. For $k>N$ the value of the $D_i$ does not matter and can be set to 1 for concreteness. Compute $n$ functions $\tilde{D}_1,\ldots, \tilde{D}_n$ where $\tilde{D}_i:\mathbb{Z}_{\geq i}\to (0,\infty)$ for $i\in \llbracket 1,n\rrbracket$ via the Pitman transform as depicted in the right-hand side of Figure \ref{fig:fullRSKmatch}. The, the outcome of these two calculations match in the sense that $\tilde{D}_{i,j} = \tilde{D}_{i}(j)$ as long as $1\leq i\leq j\leq N$ and $i\in \llbracket 1,n\wedge N\rrbracket$.
\end{lemma}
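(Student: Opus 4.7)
The plan is to prove the lemma by propagating the local equivalence of Lemma \ref{lem:rskpit} through the full network. The left side of Figure \ref{fig:fullRSKmatch} depicts the gRSK correspondence as a planar arrangement of geometric row insertion vertices, and the right side depicts the Pitman transform $\mathcal{W}$ as an arrangement of $\Tau$ vertices with the same topology. At each vertex, Lemma \ref{lem:rskpit} states that a single row insertion on word-data $(\xi, b)$ agrees with a single $\Tau$ on function-data $(\xi(\cdot), B(\cdot))$, under the convention that the horizontal wires $\xi, \xi'$ carry word entries directly as function values, while the vertical wires $b, b'$ carry word entries whose cumulative products give the corresponding function values $B, B'$ on the Pitman side.

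The induction processes the vertices in a topological order from upper-left to lower-right. For the base case, I would first verify the boundary match. On the top boundary of the gRSK network the rows $d_i = (d_{i,1},\ldots,d_{i,N})$ enter as $b$-type data, and the corresponding Pitman top inputs are the functions $D_i(k) = \prod_{j=1}^{k} d_{i,j}$, which are precisely the cumulative products of the entries of $d_i$, so the cumulative-product typing matches by construction. On the left boundary of the gRSK network, the $e_1$ vectors entering as $\xi$ trigger the degenerate case \eqref{es-row-ins}, producing $\xi'_k = \prod_{i=\ell}^{k} b_i$; this means that the leftmost column simply converts each row $d_i$ into its cumulative product $D_i$, which is exactly what the Pitman picture takes as its starting data. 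In other words, the gRSK network contains an initial column of $e_1$-insertion vertices that is invisible on the Pitman side, and after that column both networks carry the same $\xi$-type data $D_i$ into the main body of the network.

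For the inductive step, assume that at the current vertex the top $b$-type input and the left $\xi$-type input already agree under the direct/cumulative-product correspondence between the gRSK and Pitman networks. Lemma \ref{lem:rskpit} then guarantees that the right $\xi'$-type output and the bottom $b'$-type output also agree. Iterating over all vertices in topological order establishes the correspondence throughout the network, and the right-side outputs of the gRSK network at the appropriate positions are the diagonals of $z(n)$, read off exactly at the locations where the Pitman network outputs the final functions $\tilde{D}_i$. Since these are $\xi$-type outputs, the equality $\tilde{D}_{i,j} = \tilde{D}_i(j)$ follows for $1 \leq i \leq j \leq N$ and $i \in \llbracket 1, n \wedge N \rrbracket$ as claimed.

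The main obstacle I anticipate is the bookkeeping needed to match the two networks vertex-by-vertex and wire-by-wire. In particular one must check that the domain shifts produced by the operators $\Tau_{r,m}$, each of which shifts the domain of the $(m+1)$-st function from $\mathbb{Z}_{\geq r}$ to $\mathbb{Z}_{\geq r+1}$, align with the trapezoidal or triangular shape of the $P$-tableau $z(n)$ on the gRSK side. The cleanest way to organize this is to attach to each wire a single label consisting of an index and a type (either $\xi$ or $B$), so that the induction reduces to a purely diagrammatic matching of the two pictures in Figure \ref{fig:fullRSKmatch} rather than a calculation.
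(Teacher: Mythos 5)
Your proposal is correct and is essentially the paper's own argument: the paper proves this lemma in one line by invoking Lemma \ref{lem:rskpit} at each vertex together with the boundary insertion rule \eqref{es-row-ins}, which is exactly your base case (the left-boundary $e_1$-insertions converting rows $d_i$ into the cumulative products $D_i$) plus your vertex-by-vertex induction. The only difference is that you spell out the diagrammatic bookkeeping that the paper leaves implicit.
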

\begin{proof}
The follows immediately from Lemma \ref{lem:rskpit} along with the insertion rule \eqref{es-row-ins} used on the boundary in the gRSK correspondence.
\end{proof}

The Pitman transform depicted on the right-hand side of \eqref{fig:fullRSKmatch} is a graphical implementation of the $\mathcal{W}$ operator from \eqref{eq:SW}. Thus, in light of Lemma \ref{lem:fullrskmatch}, we have the following.

\begin{cor}\label{cor:DWD}
Let $\tilde{D}=(\tilde{D}_1,\ldots, \tilde{D}_n)$ denote the output on the right-hand side of \eqref{fig:fullRSKmatch} with given input functions $D_1,\ldots, D_n$. Then
$$
\tilde{D} = \mathcal{W}D.
$$
\end{cor}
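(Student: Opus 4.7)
The plan is to verify that the graphical implementation of Pitman transforms on the right-hand side of Figure \ref{fig:fullRSKmatch} is nothing more than a pictorial encoding of the composition $\mathcal{W} = \mathcal{S}_{n-1}\cdots\mathcal{S}_1$ with $\mathcal{S}_r = \Tau_{r,r}\Tau_{r,r+1}\cdots\Tau_{r,n-1}$ from \eqref{eq:SW}. Since Lemma \ref{lem:fullrskmatch} has already identified the individual vertices of the graph with single applications of the operator $\Tau$ from \eqref{eq:taudef}, the remaining content of the corollary is purely a matter of bookkeeping: one must check that reading the graph in the correct order reproduces exactly the nested composition that defines $\mathcal{W}$.

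First, I would label each vertex in the picture by a pair $(r,m)$ indicating which $\Tau_{r,m}$ it implements, where $m$ denotes the index of the pair $(D_m, D_{m+1})$ being acted upon at that stage and $r$ denotes the common left-endpoint of the domains of those two functions at that moment. Starting from the initial configuration $D = (D_1,\ldots, D_n)$ with all $D_i$ defined on $\mathbb{Z}_{\geq 1}$, the first sweep corresponds to operators $\Tau_{1,n-1}, \Tau_{1,n-2}, \ldots, \Tau_{1,1}$ applied in that order; this matches $\mathcal{S}_1$ by definition. After $\mathcal{S}_1$, by the domain-tracking observation recorded in Definition \ref{defn:dpit} (just after \eqref{eq:SW}), the resulting $n$-tuple has $r_i = i \wedge 2$, so the pair $(m, m+1)$ with $m \geq 2$ now shares left-endpoint $2$, enabling the next sweep $\mathcal{S}_2 = \Tau_{2,2}\cdots \Tau_{2,n-1}$.

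Iterating this observation, the full graph decomposes into $n-1$ sweeps, where the $r$th sweep performs $\mathcal{S}_r$ on the output of the previous sweep, and after the $r$th sweep the running domain satisfies $r_i = i \wedge (r+1)$. Reading the full graph from the first sweep down to the last thus produces exactly $\mathcal{S}_{n-1}\cdots\mathcal{S}_2\mathcal{S}_1 D = \mathcal{W}D$, and Lemma \ref{lem:fullrskmatch} identifies this with the output $\tilde D$ on the right-hand side of Figure \ref{fig:fullRSKmatch}.

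The main obstacle, such as it is, is purely notational: one must be careful that the order of vertices chosen to traverse the graph (for instance, sweep-by-sweep from left to right within each sweep) really corresponds to the nested composition as written in \eqref{eq:SW}, and that at each vertex the two input functions are indeed defined on the same domain $\mathbb{Z}_{\geq r}$ so that the operator $\Tau_{r,m}$ is well-posed. Both checks follow from the inductive description of the domains given in Definition \ref{defn:dpit}; there is no substantive new computation required beyond Lemma \ref{lem:fullrskmatch}.
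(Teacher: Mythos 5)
Your proposal is correct and follows essentially the same route as the paper, which treats the corollary as an immediate consequence of the observation that the right-hand side of Figure \ref{fig:fullRSKmatch} is a graphical implementation of $\mathcal{W}$ together with Lemma \ref{lem:fullrskmatch}. Your sweep-by-sweep bookkeeping, including the domain tracking $r_i = i\wedge(r+1)$ from Definition \ref{defn:dpit}, simply makes explicit what the paper asserts in one line.
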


Combining Corollary \ref{cor:DWD} and Proposition \ref{prop:rskequivs} yields a partition function formula for $\mathcal{W}D$.

\begin{figure}[ht]
  \captionsetup{width=.8\linewidth}
  \includegraphics[width=2.5in]{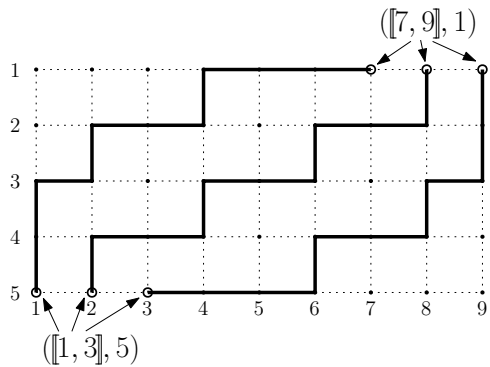}
  \caption{Corollary \ref{cor:Wpart} provides a partition function representation for $\mathcal{W}D$. Illustrated here is the endpoint pair (and a multipath between them) associated to $n=5$, $N=9$ and $\ell=3$.}
  \label{fig:Wpart}
\end{figure}
\begin{cor}\label{cor:Wpart}
Fix any $n, N\in \mathbb{Z}_{\geq 1}$ and any functions  $D_1,\ldots, D_n:\mathbb{Z}_{\geq 1}\to (0,\infty)$, writing $D=(D_1,\ldots, D_n)$. Then, for $\ell\in \llbracket 1,n\wedge N\rrbracket$,
$$
\prod_{r=1}^\ell \big(\mathcal{W}D\big)_r(N) = D\Big[ \big(\llbracket 1,\ell\rrbracket,n\big)\to \big(\llbracket N-\ell+1,N\rrbracket,1\big)\Big],
$$
where we have used the shorthand that for $a\leq b\in \mathbb{Z}$,
$\big(\llbracket a,b\rrbracket,m\big) := \big((a,m),\ldots, (b,m)\big)$ (see Figure \ref{fig:Wpart}).
Letting $d_{i,j}:=\frac{D_i(j)}{D_i(j-1)}$ (with $D_i(0)\equiv 1$) and defining the $N$ by $n$ array $\widetilde{d}$ via $\widetilde{d}_{i,j} = d_{i,n-j+1}$,
$$
\big(\mathcal{W}D\big)(N) = \sh\big(P(\widetilde d_{\llbracket 1,N\rrbracket,\llbracket 1,n\rrbracket})\big).
$$
Consequently, letting $\tilde{d}^\top$ be the transpose of $\widetilde{d}$ so $\widetilde{d}^\top_{i,j} = \widetilde{d}_{j,i}$, we have that for any $N$
$$
\Big(\big(\mathcal{W}D\big)(t)\Big)_{t\in \llbracket 1,N\rrbracket} = Q\big(\widetilde{d}^\top_{\llbracket 1,n\rrbracket,\llbracket 1,N\rrbracket})\big).
$$
\end{cor}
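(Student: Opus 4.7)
The plan is to establish the three identities in sequence, leaning on Theorem \ref{thm:dinv}, Corollary \ref{cor:DWD}, and Proposition \ref{prop:rskequivs}.

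For the first identity $\prod_{r=1}^\ell (\mathcal{W}D)_r(N) = D\big[(\llbracket 1,\ell\rrbracket, n) \to (\llbracket N-\ell+1, N\rrbracket, 1)\big]$, I would apply Theorem \ref{thm:dinv} with $U = (\llbracket 1,\ell\rrbracket, n)$ and $V = (\llbracket N-\ell+1, N\rrbracket, 1)$. Since $u_i = i \leq n$ for all $i \in \llbracket 1,\ell\rrbracket$, the lifting operator sends each $(i,n)$ to $(i, n \wedge i) = (i,i)$, so $\ua U = ((1,1),(2,2),\ldots,(\ell,\ell))$. The theorem then reduces the problem to evaluating $(\mathcal{W}D)[\ua U \to V]$. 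I would argue by induction on $r$ that within the triangular domain of $\mathcal{W}D$ (where $(\mathcal{W}D)_m$ is defined on $\mathbb{Z}_{\geq m}$), the combined constraints of non-intersection and domain of definition force $\pi_r$ to travel right along level $r$ from $(r,r)$ to $(N-\ell+r, r)$ and then straight up to $(N-\ell+r, 1)$. The resulting unique multipath occupies exactly the positions $\llbracket m, N \rrbracket$ at each level $m \in \llbracket 1,\ell\rrbracket$, and the weight at level $m$ telescopes to $(\mathcal{W}D)_m(N)/(\mathcal{W}D)_m(m-1) = (\mathcal{W}D)_m(N)$ using the boundary convention.

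For the second identity $(\mathcal{W}D)(N) = \sh(P(\widetilde d_{\llbracket 1,N\rrbracket, \llbracket 1,n\rrbracket}))$, I would combine the first identity with Proposition \ref{prop:rskequivs}, which gives $z_{N,1}(n)\cdots z_{N,\ell}(n) = \tau_{N,\ell}(n)$ for the shape entries $z_{N,\ell}(n)$ of the $P$-tableau. It remains to match $D[U \to V] = \tau_{N,\ell}(n)$ via a weight-preserving bijection between the two multipath families. The map sending polymer point $(x,m)$ to the gRSK lattice point $(n-m+1, x)$ interchanges "right" and "up" steps and carries each start $(r,n)$ to $(1,r)$ and each end $(N-\ell+r,1)$ to $(n, N+r-\ell)$, matching the endpoints prescribed in $\Pi^{\ell}_{n,N}$; with $\widetilde d$ chosen so that its entries agree entry-wise with the polymer weights under this map, the weighted sums coincide. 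Taking ratios $z_{N,\ell}(n) = \tau_{N,\ell}(n)/\tau_{N,\ell-1}(n) = (\mathcal{W}D)_\ell(N)$ then yields the stated equality of shape vectors.

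For the third identity $((\mathcal{W}D)(t))_{t\in\llbracket 1,N\rrbracket} = Q(\widetilde d^\top_{\llbracket 1,n\rrbracket, \llbracket 1,N\rrbracket})$, I would exploit the locality of $\mathcal{W}$ in the position variable—each $(\mathcal{W}D)_r(t)$ depends only on $D_i(s)$ for $s \leq t$—so applying the second identity with $N$ replaced by $t$ gives $(\mathcal{W}D)(t) = \sh(P(\widetilde d_{\llbracket 1,t\rrbracket, \llbracket 1,n\rrbracket}))$. The $t$-th entry of $Q(\widetilde d^\top)$ is by definition $\sh(P(\widetilde d^\top_{\llbracket 1,n\rrbracket,\llbracket 1,t\rrbracket}))$. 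The equality of these two shapes is the standard transpose-symmetry of gRSK: by Proposition \ref{prop:rskequivs}, both shapes are determined by the products $\tau_{\cdot,\ell}$, and the non-intersecting path systems entering these are in weight-preserving bijection via the coordinate swap $(i,j) \leftrightarrow (j,i)$, so $\tau^{A}_{t,\ell}(n) = \tau^{A^\top}_{n,\ell}(t)$ and hence the shapes agree.

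The main obstacle, and the source of most of the work, is the bijection in the second step. The polymer setup indexes levels by $\llbracket 1,n\rrbracket$ and positions by $\mathbb{Z}_{\geq 1}$, while the gRSK matrix convention places the $\llbracket 1,N\rrbracket$ index first and the $\llbracket 1,n\rrbracket$ index second, but the path indexing in Definition \ref{def:path} reverses these roles; threading the coordinate map $(x,m) \leftrightarrow (n-m+1, x)$ through these conventions and verifying that it dovetails precisely with the definition of $\widetilde d$ requires careful bookkeeping. Once that is correctly set up, the rest of the argument is essentially mechanical.
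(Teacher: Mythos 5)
Your proposal is correct, but it reaches the first identity by a genuinely different route than the paper. The paper's proof of Corollary \ref{cor:Wpart} is essentially a one-liner: Corollary \ref{cor:DWD} (via Lemmas \ref{lem:rskpit} and \ref{lem:fullrskmatch}) identifies $\mathcal{W}D$ with the gRSK output $z(n)$ for the re-indexed weight array, and Proposition \ref{prop:rskequivs} (quoted from \cite{COSZ}, i.e.\ the Noumi--Yamada/Greene-type theorem) already says that the shape products $z_{N,1}(n)\cdots z_{N,\ell}(n)$ are the multipath partition functions $\tau_{N,\ell}(n)$; translating $\tau_{N,\ell}(n)$ back into the $D\big[\cdot\to\cdot\big]$ notation is the same re-indexing bookkeeping you carry out in your second step, and the $P$/$Q$ statements then follow as you describe. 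In particular the paper's derivation never invokes Theorem \ref{thm:dinv}, so the corollary stays logically independent of the main invariance theorem (which matters only in that the introduction uses the corollary as an alternative characterization of $\mathcal{W}$; there is no circularity in your route, since Theorem \ref{thm:dinv} is proved self-containedly in Section \ref{sec:dpart}). What your approach buys is a clean, elementary derivation of the first display: applying Theorem \ref{thm:dinv} with packed endpoints and observing that in the staircase domain of $\mathcal{W}D$ the lifted multipath from $\big((1,1),\ldots,(\ell,\ell)\big)$ to $\big(\llbracket N-\ell+1,N\rrbracket,1\big)$ is unique, occupies $\llbracket m,N\rrbracket$ on each level $m\in\llbracket 1,\ell\rrbracket$, and telescopes to $\prod_{r=1}^{\ell}(\mathcal{W}D)_r(N)$ -- your induction forcing each $\pi_r$ along level $r$ up to position $N-\ell+r$ is correct, and this is exactly the discrete-geometric analog of the ``melon'' evaluation in \cite{DOV}; you should just record the trivial check that $(U,V)$ is an endpoint pair when $\ell\leq n\wedge N$ and the boundary convention $(\mathcal{W}D)_r(r-1)=1$. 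What the paper's route buys is brevity and independence from Theorem \ref{thm:dinv}. For the second and third displays your plan coincides with the paper's machinery (Proposition \ref{prop:rskequivs} plus the coordinate bijection, and for the $Q$ statement the locality of $\mathcal{W}$ in $t$ together with the transpose symmetry of the $\tau$-path description); the indexing of $\widetilde{d}$ in the statement is itself slightly loose, so your explicit flagging of the convention clash between Definitions \ref{NYdef} and \ref{def:path} is appropriate rather than a gap.
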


\subsubsection{Inverse-gamma weights}\label{sec:inv}
A random variable $X$ has {\it inverse-gamma distribution with parameter $\theta>0$} if it is supposed on the positive reals where it has density relative to Lebesgue given by
$\frac{1}{\Gamma(\theta)} x^{-\theta-1}e^{-\frac{1}{x}}dx.$

If the array $m$ from Definition \eqref{NYdef} is filled with iid inverse-gamma distributed random variables, \cite[Theorem 3.9]{COSZ} shows that the shape $\sh(z(n))$ of $m_{\llbracket 1,N\rrbracket,\llbracket 1,n\rrbracket}$ under the gRSK correspondence of evolves as a Markov process in $n$ with an explicit transition kernel (between $n$ and $n+1$). It follows directly from the factorized nature of the transition kernel that  $\big(\sh(z(n))\big)_{n\in\mathbb{Z}_{\geq 1}}$ enjoys the structure of a (discrete) Gibbsian line ensemble -- see \cite{Wu,JOC,BCD} for details. In Section \ref{sec:airykpzsheet} we mention how a special limit of this Gibbsian structure is important in the Airy sheet construction of \cite{DOV} from the Airy line ensemble.

\subsubsection{Proof of Theorem \ref{thm:dinv} via Desnanot-Jacobi}\label{sec:dj}
This proof was communicated to us by Konstantin Matveev. It has the flavor of methods used to parameterize totally positive matrices (see, e.g. \cite{FZ} and references therein), though the matrices presently are totally non-negative, for which the parameterization is more involved.

Recall that we seek to prove that $D\big[U\to V\big] = \big(\mathcal{W}D\big)\big[\ua U\to V\big]$. We will first relate this equality, by means of the Lindstr\"{o}m-Gessel-Viennot lemma, to an equality of matrices $M$ and $\widetilde{M}$ whose entries are the $k=1$ versions of this identity (when $|U|=|V|=1$). The idea to proving the equality of those matrices is to match certain minor determinants and then use the Desnanot-Jacobi identity to inductively match all of the others.

Before commencing with the proof, let us recall the Desnanot-Jacobi identity. For a matrix $M$, we write $M_{U,V}$ for the minor of $M$ comprised of rows $U$ and columns $V$. For instance, $M_{\llbracket i,i+k-1\rrbracket,\llbracket j,j+k-1\rrbracket}$ is the minor composed of rows $i$ through $i+k-1$, and columns $j$ through $j+k-1$. Writing $|M|$ for the determinant $M$, the Desnanot-Jacobi identity states that
\begin{align}
\nonumber\big|M_{\llbracket i,i+k-1\rrbracket,\llbracket j,j+k-1\rrbracket}\big|\cdot \big|M_{\llbracket i-1,i+k-2\rrbracket,\llbracket j-1,j+k-2\rrbracket}\big| &= \big|M_{\llbracket i-1,i+k-1\rrbracket,\llbracket j-1,j+k-1\rrbracket}\big|\cdot \big|M_{\llbracket i,i+k-2\rrbracket,\llbracket j,j+k-2\rrbracket}\big|\\ &+ \big|M_{\llbracket i-1,i+k-2\rrbracket,\llbracket j,j+k-1\rrbracket}\big|\cdot \big|M_{\llbracket i,i+k-1\rrbracket,\llbracket j-1,j+k-2\rrbracket}\big|.
\label{eq:dj}
\end{align}

To ease notations, let us define $\widetilde{D} = \mathcal{W}D$ and from $D$ and $\widetilde{D}$ we define vertex weights $d_{x,m}=D_m(x)/D_m(x-1)$ and $\widetilde{d}_{x,m}=\widetilde{D}_m(x)/\widetilde{D}_m(x-1)$. Note that while $d_{x,m}$ is defined for all $x\in \mathbb{Z}_{\geq 1}$ and $m\in \llbracket 1,n\rrbracket$,  $\widetilde{d}_{x,m}$ is only defined when we additionally assume that $x\geq m$. Let us also define matrices $M$ and $\widetilde{M}$ by (see \ref{fig:Dmat} for an illustration)
$$
M_{u,v} :=D\big[(u,n)\to (v,1)\big]\qquad \widetilde{M}_{u,v}:=\widetilde{D}\big[(u,n\wedge u)\to (v,1)\big].
$$
Our first observation is that by applying the  Lindstr\"{o}m-Gessel-Viennot (LGV) lemma, it follows that
$$
D\big[U\to V\big] =\big|M_{U,V}\big|,\qquad \textrm{and}\qquad \widetilde{D}\big[\ua U\to V\big] = \big|\widetilde{M}_{U,V}\big|.
$$
We have slightly abused notation above. On the left-hand sides of the above equations, we take $U=\big((u_1,n),\cdots ,(u_k,n)\big)$ and $V=\big((v_1,1),\ldots, (v_k,1)\big)$ with $u_1<\cdots <u_k$ and $v_1<\cdots <v_k$; on the right-hand sides we take $U=(u_1,\ldots, u_k)$ and $V=(v_1,\ldots, v_k)$.
where $u,v\in \mathbb{Z}_{\geq 1}$. Note that the LGV lemma is typically stated for edge-weighted graphs, not vertex-weighted. However, in our present case we can transfer our vertex weights to edge weights in the following way. To every edge entering a vertex from the left or below, associate the edge with the weight of the vertex. For vertices on the bottom or left-diagonal of the diagrams in Figure \ref{fig:Dmat}, attach an extra vertical edge below them and transfer the vertex weight to that edge. Then, the edge-weighted partition function from $u$ (where the starting vertex for $u$ is now shifted down by one to be below the newly added edge) to $v$ is equal to the original vertex-weighted partition function. The same holds for the partition functions for ensembles of non-intersecting multipaths from $U$ to $V$, hence we can apply LGV.

\begin{figure}[t]
  \captionsetup{width=.8\linewidth}
  \includegraphics[width=4in]{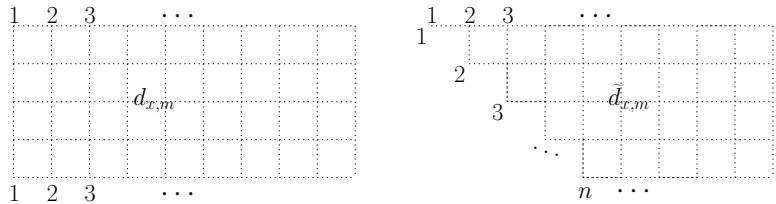}
  \caption{On the left we depict the matrix $M$ and on the right the matrix $\widetilde{M}$. The rows are indexed by the numbers on the bottom (and left-diagonal in the case of $\widetilde{M}$) and the columns are indexed by the numbers on the top of the diagrams. The matrix elements with row $u$ and column $v$ are equal to the sum over all directed paths from $u$ to $v$ of the product of the vertex weights over the paths. On the left, the vertex weights are given by the $d_{x,m}$ and on the right by the $\widetilde{d}_{x,m}$.}
  \label{fig:Dmat}
\end{figure}

In light of the above observation and application of the LGV lemma, it suffices to prove equality of the matrix elements of $M$ and $\widetilde{M}$. To prove this we will appeal to certain a priori equalities of minor determinants of $M$ and $\widetilde{M}$, along with the Desnanot-Jacobi identity. Let us start by recording the a priori obvious equalities:
\begin{enumerate}
\item For all $j,k\in \mathbb{Z}_{\geq 1}$, $\big|M_{\llbracket 1,k\rrbracket,\llbracket j,j+k-1\rrbracket}\big| = \big|\widetilde{M}_{\llbracket 1,k\rrbracket,\llbracket j,j+k-1\rrbracket}\big|$. This is from the definition of $\widetilde{D}$.
\item $\big|M_{\llbracket i,i+k-1\rrbracket,\llbracket j,j+k-1\rrbracket}\big|$ and $\big|\widetilde{M}_{\llbracket i,i+k-1\rrbracket,\llbracket j,j+k-1\rrbracket}\big|$ are strictly positive if and only if $k\leq n$ and $i\leq j$, or $k>n$ and $i=j$. In the other cases, the minor determinants are zero. This follows because these determinants present partition functions which involve positive weights. In the case when $k>n$, the only non-zero partition function is when $i=j$.
\item $\big|M_{\llbracket i,i+k-1\rrbracket,\llbracket i,i+k-1\rrbracket}\big|= \big|\widetilde{M}_{\llbracket i,i+k-1\rrbracket,\llbracket i,i+k-1\rrbracket}\big|$. This follows from the calculation depicted in Figure \ref{fig:ratio} and is a consequence of the upper-triangularity of the matrices in question.
\item $\big|M_{\llbracket i,i+k-1\rrbracket,\llbracket j,j+k-1\rrbracket}\big|= \big|\widetilde{M}_{\llbracket i,i+k-1\rrbracket,\llbracket j,j+k-1\rrbracket}\big|=0$ if $i<j$ and $k>n$. This is because it is not possible to connect off-set starting and ending points by more than $n$ disjoint paths.
\end{enumerate}

\begin{figure}[t]
  \captionsetup{width=.8\linewidth}
  \includegraphics[width=6in]{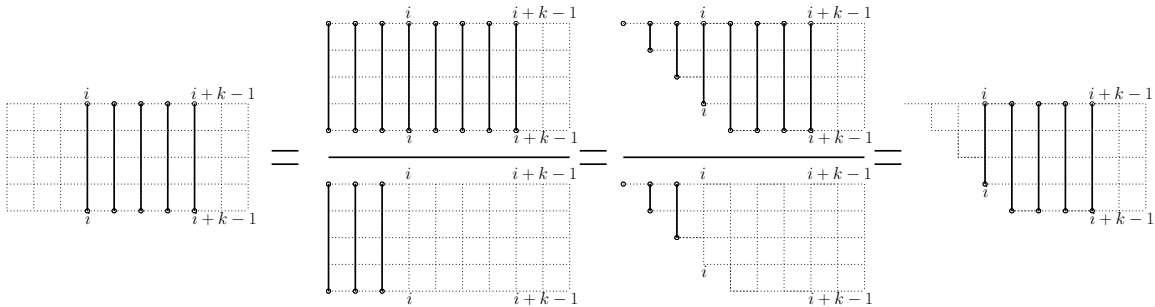}
  \caption{On the left we represent (in light of the LGV lemma) the partition function equal to $\big|M_{\llbracket i,i+k-1\rrbracket,\llbracket i,i+k-1\rrbracket}\big|$. There is only one set of paths (as depicted in the figure) which contributes and the weights used are those of $d_{x,m}$. The first equality rewrites the contribution of that path as a ratio of two partition functions (still using the $d_{x,m}$ weights) which start at the left edge of the diagram. Both of these also only have single paths which contribute. The second equality follows by observing that both the numerator and denominator are equal to the corresponding numerator and denominator for the diagram with the corner removed and with weights $\widetilde{d}_{x,m}$. This equality was observe as the first a prior equality and is from the definition of $\widetilde{D}$. The final equality is again reliant on the fact that only one path contributes to the partition functions.}
  \label{fig:ratio}
\end{figure}

We will now inductively show that for all $i,j,k\in \mathbb{Z}_{\geq 1}$
\begin{equation}
\big|M_{\llbracket i,i+k-1\rrbracket,\llbracket j,j+k-1\rrbracket}\big|=\big|\widetilde{M}_{\llbracket i,i+k-1\rrbracket,\llbracket j,j+k-1\rrbracket}\big|.
\label{eq:toshowMMtilde}
\end{equation}
We prove this by induction. As a base case, from the a priori equalities listed earlier we know that this is true for $i=j$ and all $k$, as well as for $i=1$ and all $j\geq i$ and $k$. The induction is on $(i,j+k-1)$ in lexicographic order and relies on the Desnanot-Jacobi identity \eqref{eq:dj}. As long as
$\big|M_{\llbracket i-1,i+k-2\rrbracket,\llbracket j-1,j+k-2\rrbracket}\big|>0$, we may divide both sides of \eqref{eq:dj} by it, yielding
\begin{align}
{\scriptstyle|M_{\llbracket i,i+k-1\rrbracket,\llbracket j,j+k-1\rrbracket}|}= \tfrac{|M_{\llbracket i-1,i+k-1\rrbracket,\llbracket j-1,j+k-1\rrbracket}|\cdot |M_{\llbracket i,i+k-2\rrbracket,\llbracket j,j+k-2\rrbracket}|+ |M_{\llbracket i-1,i+k-2\rrbracket,\llbracket j,j+k-1\rrbracket}|\cdot |M_{\llbracket i,i+k-1\rrbracket,\llbracket j-1,j+k-2\rrbracket}|}{|M_{\llbracket i-1,i+k-2\rrbracket,\llbracket j-1,j+k-2\rrbracket}| }
\label{eq:djprime}
\end{align}
which can be graphically depicted in Figure \ref{fig:dj}.

\begin{figure}[t]
  \captionsetup{width=.8\linewidth}
  \includegraphics[width=6in]{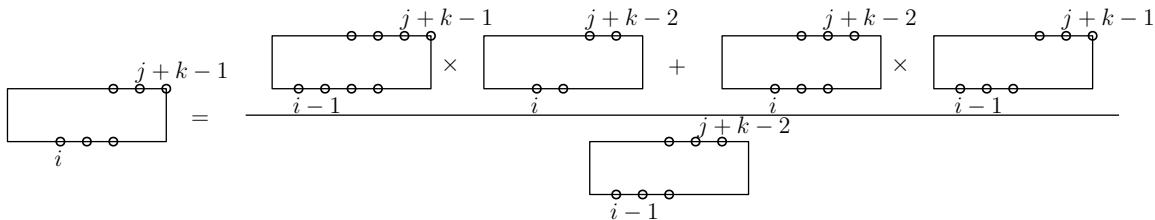}
  \caption{A graphical depiction of the identity (\ref{eq:djprime}) which is essentially the Desnanot-Jacobi identity (\ref{eq:dj}) up to dividing by one of the terms on the left. The diagrams represent minor determinants of the matrix $M$. If the bottom label is $i$ and the top label is $j+k-1$ and the number of circles $\circ$ is $j$, then the diagram represents the determinant $|M_{\llbracket i,i+k-1\rrbracket,\llbracket j,j+k-1\rrbracket}|$ (as on the left-hand side of the equation). Notice that the number of circles and the starting and ending points differ in each term on the right-hand side, but in manner lexicographically dominated by $(i,j+k-1)$.}
  \label{fig:dj}
\end{figure}

On the left-hand side of \eqref{eq:djprime} we are dealing with paths which start left-justified at $i$ and ending right-justified at $j+k-1$. On the right-hand side of \eqref{eq:djprime} we either have the starting point $i$ is diminished to $i-1$, or that it remains $i$ but the ending point is diminished to $j+k-2$. Note that the number of paths may change, but we are more interested in keeping track of the left starting point and right ending point since that is what we induct upon. Inductively from the based cases, we establish the positivity of the denominator and that we may replace the $M$ by $\widetilde{M}$ on the right-hand side of \eqref{eq:djprime}. Indeed, the base case is sufficient for this induction since repeated application of \eqref{eq:djprime} eventually yields expressions of the form $|M_{\llbracket \tilde i,\tilde i+\tilde k-1\rrbracket,\llbracket \tilde j,\tilde j+\tilde k-1\rrbracket}|$ with either $\tilde i=1$ or $\tilde i=\tilde j$.

To summarize, via the above described induction, we show the equality desired in \eqref{eq:toshowMMtilde}, precisely as needed to prove the theorem.

\subsubsection{Proof of Theorem \ref{thm:dinv} via the Noumi-Yamada matrix encoding of gRSK}\label{sec:ny}
This proof essentially comes from combining the results of Theorem 1.7 and Section 2.3 of \cite{NY}. As indicated in the introduction, we appreciate Duncan Dauvergne pointing us towards this. For completeness, we describe the proof in the notation of our present paper.

From a function $E:\mathbb{Z}_{\geq r} \to (0,\infty)$ (with the convention that $E(r-1)\equiv 1$) define $e_x= E(x)/E(x-1)$ for $x\in \mathbb{Z}_{\geq r}$. From this we define a matrix $H_r(E):\mathbb{Z}_{\geq 1}^2\to [0,\infty)$ whose $i,j$ entry is equal to the partition function from $i$ on the bottom to $j$ on the top of the diagram depicted in Figure \ref{fig:hs}. More concretely, for $i=j\in \llbracket 1,r-1\rrbracket$ the entries are $1$ and for $i,j\in \mathbb{Z}_{\geq r}$ with $i\leq j$, the entry is the partial product $e_i\cdots e_j$; all other entries are zero.

\begin{figure}[H]
  \captionsetup{width=.8\linewidth}
  \includegraphics[width=4in]{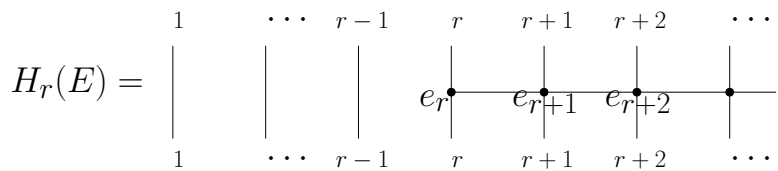}
  \caption{The matrix entry $(i,j)$ of $H_r(E)$ corresponds to the partition function from $i$ on the bottom to $j$ on the top. The weights of the edges are all 1 and the weights of the vertices are as labeled.}
  \label{fig:hs}
\end{figure}

The starting point for this proof is the observation made in Section 2.2 of \cite{NY} that geometric row insertion can be related to matrix factorization via these  $H_r$ matrices. For our purposes, this observation boils down to the following fact: For any $r\in \mathbb{Z}_{\geq 1}$ and $G,F:\mathbb{Z}_{\geq r}\to (0,\infty)$, the geometric Pitman transforms $\big(g\odot f\big):\mathbb{Z}_{\geq r}\to (0,\infty)$ and  $\big(f\otimes g\big):\mathbb{Z}_{\geq r+1}\to (0,\infty)$ defined in Definition \ref{defn:dpit} satisfy the matrix identity
\begin{equation}
H_r(G)H_r(F) = H_{r+1}(F\otimes G)H_r(G\odot F).
\label{eq:nyH}
\end{equation}
This identity is readily checked from definitions, and in fact is essentially the same as the result from Step 1 of the proof of Theorem \ref{thm:dinv} given in Section \ref{sec:dpart}.

As shown in Corollary \ref{cor:DWD}, the gRSK correspondence can be written in terms of the Pitman transforms (see also the right-hand side of Figure \ref{fig:fullRSKmatch}). Given this and \eqref{eq:nyH} it follows immediately that $D$ and  $\widetilde{D} := \mathcal{W}D$ satisfy the matrix identity
\begin{equation}
H_1(D_n)\cdots H_1(D_1) = H_{n}(\widetilde{D}_n)\cdots H_{1}(\widetilde{D}_1).
\label{eq:nyHfull}
\end{equation}

Multiplying the $H$ matrices is equivalent to appending the diagrams and computing matrix entries as partition functions. From this, it follows that the $(u,v)$ entry corresponding to the left-hand side of \eqref{eq:nyHfull} are precisely $D\big[(u,n)\to (v,1)\big]$ and the $(u,v)$ entry corresponding to the right-hand side of \eqref{eq:nyHfull} are precisely $\widetilde{D}\big[(u,n\wedge u)\to (v,1)\big]$. The general $U$ and $V$ result follows (as in the Matveev proof) by an application of the Lindstr\"{o}m-Gessel-Viennot lemma.

\subsection{Zero-temperature limit}\label{sec:dzero}

All of the results in Sections \ref{sec:dpart} and  \ref{sec:dRSK} admit zero-temperature limits. We focus here only on the limit of Theorem \ref{thm:dinv}. For an {\it inverse-temperature} $\beta\in(0,\infty)$ and $f_1,\ldots, f_n:\mathbb{Z}_{\geq 1}\to \mathbb{R}$ (with the convention that $f_i(0)\equiv 0$) define $D^{\beta}_1,\ldots, D^{\beta}_n:\mathbb{Z}_{\geq 1}\to (0,\infty)$ via $D_i(x):= e^{\beta f_i(x)}$. For an endpoint pair $(U,V)$, define (note that the $\beta^{-1}$ superscript below is not an exponent, but rather a label for the variant of $f\big[U\to V\big]$ scaled as below)
$$
f\big[U\to V\big]^{\beta^{-1}} := \beta^{-1} \log\Big( D^{\beta}\big[U\to V\big]\Big).
$$
For any set of points  in the domain of $D$, observe that
$$f\big[U\big]^{\beta^{-1}} :=  \beta^{-1} \log\Big( D^{\beta}\big[U\big]\Big) = \sum_{(x,m)\in U} \big(f_m(x)-f_m(x-1)\big).
$$
Thus, by Laplace's method we can extract a {\it zero-temperature limit}
$$
\lim_{\beta\to \infty} f\big[U\to V\big]^{\beta^{-1}} = \max_{\pi\in U\to V} \Big(\sum_{(x,m)\in \pi} \big(f_m(x)-f_m(x-1)\big)\Big)=: f\big[U\to V\big]^0.
$$
We can also define zero-temperature limits of the operator $\Tau$ and $\mathcal{W}$ from Definition \ref{defn:dpit}. In particular, for any two functions $f,g:\mathbb{Z}_{\geq r}\to \mathbb{R}$, we define the zero-temperature Pitman transform via (we put a superscript $0$ to denote that this is a zero-temperature version)
\begin{align}
\big(g\odot f\big)^0(x)&:= f(x) + \max_{m\in\llbracket r,x\rrbracket} \big(g(m)-f(m-1)\big) &&\textrm{for }x\geq r\\
\big(f\otimes g\big)^0(x)&:= g(x) - \max_{m\in\llbracket r,x\rrbracket} \big(g(m)-f(m-1)\big) &&\textrm{for }x\geq r+1\\
\end{align}
(where by convention $f(r-1)=0$ when $m=r$). Then, the operator $\Tau^0$ is defined as
\begin{equation}\label{eq:taudefd}
\Tau^0\big(f,g\big) := \Big(\big(g\odot f\big)^0,\big(f\otimes g\big)^0\Big).
\end{equation}
As in Definition \ref{defn:dpit} we define $\Tau_{r,m}^0$ and then define
\begin{equation}\label{eq:SWd}
\mathcal{S}^0_rf:=\Tau_{r,r}^0\Tau_{r,r+1}^0\cdots \Tau_{r,n-2}^0\Tau_{r,n-1}^0f,\qquad
\mathcal{W}^0f :=\mathcal{S}^0_{n-1}\mathcal{S}^0_{n-2}\cdots \mathcal{S}^0_{2} \mathcal{S}^0_{1}f.
\end{equation}
Recalling the notation $\ua U$ from the statement of Theorem \ref{thm:dinv}, we may now state the immediate zero-temperature limit of that result.
\begin{cor}\label{cor:dzero}
Let $U=\big((u_i,n)\big)_{i\in\llbracket 1,k\rrbracket}$ and $V=\big((v_i,1)\big)_{i\in\llbracket 1,k\rrbracket}$ be any endpoint pair. Then, for any $n$ functions $f_1,\ldots, f_n:\mathbb{Z}_{\geq 1}\to \mathbb{R}$, writing $f=(f_1,\ldots, f_n)$ we have
\begin{equation}\label{eq:dinvd}
f\big[U\to V\big]^0 = \big(\mathcal{W}^0f\big)\big[\ua U\to V\big]^0.
\end{equation}
\end{cor}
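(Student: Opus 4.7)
The plan is to derive~\eqref{eq:dinvd} by applying Theorem~\ref{thm:dinv} to the exponentiated environment $D^\beta := (e^{\beta f_1},\ldots,e^{\beta f_n})$, taking $\beta^{-1}\log$ of both sides, and letting $\beta\to\infty$. Since the theorem gives $D^\beta[U\to V]=(\mathcal{W}D^\beta)[\ua U\to V]$ for every $\beta>0$, the corollary reduces to identifying the $\beta\to\infty$ limit of $\beta^{-1}\log$ of each side with the corresponding zero-temperature partition function.

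For the left-hand side, Laplace's method applies directly. By construction $D^\beta[U\to V]$ is a finite sum of the exponentials $\exp\big(\beta\sum_{(x,m)\in\pi}(f_m(x)-f_m(x-1))\big)$ indexed by $\pi\in U\to V$, so the elementary identity $\beta^{-1}\log\sum_j e^{\beta a_j}\to \max_j a_j$ over a finite index set yields $\beta^{-1}\log D^\beta[U\to V]\to f[U\to V]^0$ as $\beta\to\infty$.

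The main step is to show that tropicalization commutes with the iterated Pitman transform $\mathcal{W}$. First I would verify the single-step claim: if $F_\beta, G_\beta:\mathbb{Z}_{\ge r}\to(0,\infty)$ satisfy $\beta^{-1}\log F_\beta\to f$ and $\beta^{-1}\log G_\beta\to g$ pointwise on any fixed finite window, then from the same finite-sum Laplace identity one gets
$$\beta^{-1}\log(G_\beta\odot F_\beta)(x)\to (g\odot f)^0(x),\qquad \beta^{-1}\log(F_\beta\otimes G_\beta)(x)\to (f\otimes g)^0(x).$$
Since $\Tau^0$ is defined in~\eqref{eq:taudefd} precisely by this tropicalization of $\Tau$, the single-step claim says each operator $\Tau_{r,m}$ sends pointwise $\beta^{-1}\log$-convergent inputs to pointwise $\beta^{-1}\log$-convergent outputs with limits computed by $\Tau^0_{r,m}$. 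Iterating over the finite (and $\beta$-independent) composition of operators defining $\mathcal{W}$ in~\eqref{eq:SW}--\eqref{eq:SWd}, one obtains
$$\beta^{-1}\log(\mathcal{W}D^\beta)_i(x)\longrightarrow (\mathcal{W}^0 f)_i(x)$$
for every $i$ and every $x$ in the domain. A second Laplace step applied to the finite sum $(\mathcal{W}D^\beta)[\ua U\to V]$, whose logarithmic weights at each vertex $(x,m)$ converge to $(\mathcal{W}^0 f)_m(x)-(\mathcal{W}^0 f)_m(x-1)$, then yields the limit $(\mathcal{W}^0 f)[\ua U\to V]^0$. Combined with Theorem~\ref{thm:dinv} this is exactly~\eqref{eq:dinvd}.

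There is no serious obstacle, and I would expect the entire argument to fit on half a page. Every sum that appears---over paths in $U\to V$, over summation indices $m\in\llbracket r,x\rrbracket$ in a single Pitman transform, and over paths in $\ua U\to V$---is finite and has an index set depending only on $U$, $V$, and $n$, not on $\beta$. This finiteness makes Laplace's method apply term-by-term with no tightness or uniformity argument needed. The only real bookkeeping is the induction on the number of Pitman steps that shows the tropical identity $\Tau^0=\lim_\beta \beta^{-1}\log\circ\,\Tau\circ e^{\beta\,\cdot\,}$ propagates through the bounded composition defining $\mathcal{W}$, which is essentially the statement that the semiring map $(+,\times)\mapsto(\max,+)$ intertwines $\mathcal{W}$ with $\mathcal{W}^0$.
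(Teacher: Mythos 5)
Your proposal is correct and follows essentially the same route the paper intends: Corollary \ref{cor:dzero} is obtained there as the ``immediate'' zero-temperature limit of Theorem \ref{thm:dinv}, i.e.\ by applying the theorem to $D^\beta_i=e^{\beta f_i}$, taking $\beta^{-1}\log$, and using Laplace's method over the finite sums, with $\Tau^0$ and $\mathcal{W}^0$ defined precisely as the tropicalizations of $\Tau$ and $\mathcal{W}$. Your write-up simply makes explicit the (finite-index) Laplace steps and the commutation of the tropical limit with the finitely many Pitman transforms, which is exactly the content the paper leaves implicit.
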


The zero-temperature limit of the gRSK correspondence and its relation to the Pitman transform is essentially contained in the earlier mentioned online notes of Pei available at \url{https://toywiki.xyz/}. There are analogs of the results mentioned in Section \ref{sec:inv} when the inverse-gamma random variables are replaced by geometric or exponential random variables -- see the discussion after \cite[Proposition 4.1]{COSZ}.

\section{Semi-discrete polymers}\label{sec:sd}
The results contained in this section could be derived as limits of the discrete results from Section \ref{sec:d}. However, as the proofs are considerably simpler in this semi-discrete (semi-continuous) setting, we will provide complete and direct proofs. In comparing the results and proofs below to those in Section \ref{sec:d} a reader will also notice that we will now be working with logarithmic variables (i.e., the logarithms of the limits of variables from the discrete setting). We do this so as to provide a direct comparison with the zero temperature work of \cite{DOV}, as well as to directly connect our work to that of \cite{ON12}.

Before stating our semi-discrete result, Theorem \ref{thm:sdinv}, we first introduce the semi-discrete version of non-intersecting (multi)paths, partition functions and the Pitman transform.

\subsection{Partition function invariance}\label{sec:sdpart}

\begin{defn}[Non-intersecting (multi)paths]
  Fix $n\in \mathbb{Z}_{\geq 2}$. We will consider paths in the semi-discrete space $\mathbb{R}\times \llbracket 1,n\rrbracket$. For any $u<v\in \mathbb{R}$ and $m\leq \ell\in \llbracket 1,n\rrbracket$, we call a function $\pi:[u,v]\to \llbracket m,\ell\rrbracket$ a {\it path} with {\it starting points} $(u,m)$ and {\it ending points} $(v,\ell)$ if $\pi$ is nonincreasing, cadlag on $(u,v)$ and satisfies $\pi(u)=\ell$ and $\pi(v)=m$. We will often be interest in multiple paths $\pi_1,\ldots,\pi_k$ with respective starting and endpoint points
  \begin{equation*}
  U= \big\{(u_i,\ell_i)\big\}_{i\in \llbracket 1,k\rrbracket}\qquad\textrm{and}\qquad V= \big\{(v_i,\ell_i)\big\}_{i\in \llbracket 1,k\rrbracket},
  \end{equation*}
   in which case we will write $\pi=(\pi_1,\ldots, \pi_k)$ and call $\pi$ a {\it multipath} from $U$ to $V$. Two paths $\pi_1$ and $\pi_2$ are called {\it non-intersecting} if for all $t\in (u_1,v_2)\cap (u_2,v_2)$, $\pi_1(t)<\pi_2(t)$. This condition enforces that paths are disjoint in the interior of their common domain of definition. A multipath $\pi$ is non-intersecting if for each $1\leq i\neq j\leq k$, $\pi_i$ and $\pi_j$ are non-intersecting. We will assume that all multipaths are non-intersecting. We denote the set of (non-intersecting) multipaths $\pi$ from $U$ to $V$ by $U\to V$. If the set $U\to V$ is non-empty, then we say that the pair $(U,V)$ constitute an {\it endpoint pair}.
\end{defn}

\begin{defn}[Partition functions]\label{def:sdpartf}
Fix $n$ continuous functions $f_1,\ldots, f_n:\mathbb{R}_{\geq 0}\to \mathbb{R}$ centered so that $f_i(0)=0$ and write $f=(f_1,\ldots, f_n)$. To a single path $\pi$ from $(u,\ell)$ to $(v,m)$ we associated a {\it energy} to $\pi$ with respect to $f$ given by
\begin{equation*}
\int df\circ \pi :=\int_u^v f'_{\pi(t)}(t)dt.
\end{equation*}
Equivalently, to any path $\pi$ we can associate {\it jump times} $u=t_{\ell}<\ldots< t_{m}<t_{m-1}=v$ so that for all $j\in \llbracket m,\ell\rrbracket$, $t_j$ is the first time that $\pi$ is at level $j$. Then
\begin{equation*}
\int df\circ \pi = \sum_{j=m}^{\ell} f_j(t_{j-1})-f_{j}(t_{j}).
\end{equation*}
To a (multi)path $\pi=(\pi_1,\ldots, \pi_k)$ we associated an energy
\begin{equation*}
\int df\circ \pi  := \sum_{i=1}^k \int df\circ \pi_i.
\end{equation*}
For any endpoint pair $(U,V)$ we associate the {\it free energy}
\begin{equation}\label{eq:freeenergy}
f\big[U\to V\big] := \log\Big(\int_{\pi\in U\to V} e^{\int df\circ \pi}\Big)
\end{equation}
whose exponential is called the {\it partition function}. In \eqref{eq:freeenergy}, the integral over the set of $\pi\in U\to V$ should be understood as Lebesgue integral over the simplex of all possible jump times which define multipaths connecting $U$ to $V$.
\end{defn}

\begin{defn}[Semi-discrete geometric Pitman transform]\label{def:sdpit}
Define $n-1$ operators $\Tau_1,\ldots, \Tau_n$ which act on $n$-tuples of functions $f_1,\ldots,f_n$ (which we write as $f(t) = \big(f_1(t),\ldots, f_n(t)\big)$ as
\begin{equation*}
\big(\Tau_i f\big)(t) := f(t) + \Big(\log \int_0^t e^{f_{i+1}(s)-f_i(s)}ds\Big) \big(e_i-e_{i+1}\big)
\end{equation*}
where $e_1,\ldots,e_n$ are basis vectors. Using the $\Tau$ operators, we define operators $\mathcal{S}_1,\ldots, \mathcal{S}_{n-1}$ and the operator $\mathcal{W}$ which act on functions $f$ as
\begin{equation}\label{eq:sdW}
\mathcal{S}_rf:=\Tau_{r}\Tau_{r+1}\cdots \Tau_{n-1}f,\qquad \textrm{and}\qquad
\mathcal{W}f :=\mathcal{S}_{n-1}\mathcal{S}_{n-2}\cdots \mathcal{S}_{1}f.
\end{equation}
\end{defn}

\begin{thm}\label{thm:sdinv}
For any endpoint pair $(U,V)$ and collection of $n$ functions $f=(f_1,\ldots, f_n)$ as in Definition \ref{def:sdpartf},
\begin{equation}\label{eq:sdinv}
f\big[U\to V\big] = \big(\mathcal{W}f\big)\big[U\to V\big].
\end{equation}
\end{thm}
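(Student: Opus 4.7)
The plan is to follow the three-step program used to prove Theorem~\ref{thm:dinv}, specialized to the semi-discrete setting where the Pitman transforms $\Tau_i$ do not shrink function domains and hence no $\ua$-type modification of the endpoint set $U$ is needed on the right-hand side of~\eqref{eq:sdinv}.

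For Step~1 ($n=2$, $k=1$), I would evaluate each side directly. A path from $(u,2)$ to $(v,1)$ has a single jump time $s\in(u,v)$, giving
\begin{equation*}
e^{f[(u,2)\to(v,1)]} = e^{f_1(v)-f_2(u)}\int_u^v e^{f_2(s)-f_1(s)}\,ds.
\end{equation*}
Writing $I(t):=\int_0^t e^{f_2(r)-f_1(r)}\,dr$, the same formula applied to $\Tau_1 f$ acquires a multiplicative factor $I(u)I(v)$ from the boundary terms, while the integrand becomes $I'(s)/I(s)^2$, which telescopes via the antiderivative $-1/I$ to $(I(v)-I(u))/(I(u)I(v))$. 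This cancels the prefactor and recovers the original expression.

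For Step~2 ($n=2$, $k\geq 1$), non-intersection forces the jump time of $\pi_i$ to lie in $(a_i,b_i):=(u_i\vee v_{i-1},\,u_{i+1}\wedge v_i)$ (with $v_0=-\infty$, $u_{k+1}=+\infty$), and these intervals are pairwise disjoint. The partition function thus factors as $\prod_i e^{f_1(v_i)-f_2(u_i)}\big(I(b_i)-I(a_i)\big)$, and the analogous computation for $\Tau_1 f$ yields the same expression multiplied by $\prod_i I(u_i)I(v_i)/(I(a_i)I(b_i))$. The crucial observation is that $\{a_i,b_{i-1}\}=\{u_i,v_{i-1}\}$ as unordered pairs (since $\max$ and $\min$ simply swap the two values), so that $I(a_i)I(b_{i-1})=I(u_i)I(v_{i-1})$; combined with $a_1=u_1$ and $b_k=v_k$, this makes the spurious product telescope to $1$.

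For Step~3 ($n\geq 2$, $k\geq 1$), I would decompose $f[U\to V]$ by integrating over the times at which each path crosses between levels $m$ and $m+1$ (and, if $m\geq 2$, also between levels $m-1$ and $m$), where $m$ indexes the next Pitman transform $\Tau_m$ to be applied. Since the paths are non-increasing, each $\pi_i$ visits each level on a single contiguous subinterval, so the crossing locations are well-defined and finite in number. This factors the partition function into a ``slab'' on levels $\{m,m+1\}$, to which Step~2 applies (with the $u_i,v_i$ replaced by the appropriate crossing times), multiplied by two independent factors depending only on the $f_j$ with $j<m$ or $j>m+1$. Because $\Tau_m$ alters only $f_m$ and $f_{m+1}$, replacing $f$ by $\Tau_m f$ leaves these outer factors fixed while Step~2 keeps the slab invariant. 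Iterating through the composition of $\Tau$'s appearing in the definition~\eqref{eq:sdW} of $\mathcal{W}$ yields the theorem. The main obstacle is the telescoping identity in Step~2; once that is in hand, Step~3 is conceptually clean precisely because, in contrast with Theorem~\ref{thm:dinv}, the semi-discrete Pitman transform preserves domains, sparing us the shifted-domain bookkeeping (and the $\ua$ operator) that is the central source of complexity in the discrete proof.
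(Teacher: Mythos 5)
Your proposal is correct in substance and follows the same three-step program as the paper's proof. Step 1 is the paper's computation in all but notation: the antiderivative $-1/I$ is exactly how the integral in \eqref{eq:rhsman} is evaluated. In Step 2 you take a more computational route: instead of the paper's partition of the time axis into type $0$/$1$/$2$ intervals (reducing the type-$1$ pieces to Step 1 and using that the transform preserves $f_1+f_2$ on type-$2$ pieces), you write the $k$-path partition function explicitly as $\prod_i e^{f_1(v_i)-f_2(u_i)}\bigl(I(b_i)-I(a_i)\bigr)$ using the confinement of the jump times to the disjoint intervals $(a_i,b_i)$, and cancel the extra factor $\prod_i I(u_i)I(v_i)/\bigl(I(a_i)I(b_i)\bigr)$ via the observation $\{a_i,b_{i-1}\}=\{u_i,v_{i-1}\}$ together with $a_1=u_1$, $b_k=v_k$; this telescoping is valid and arguably shorter, at the cost of being special to the two-level geometry (which is all that is needed). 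In Step 3 your factorization is the paper's decomposition \eqref{eq.decomptwice}, but you misname the crossing times to be fixed: to isolate a slab on levels $\{m,m+1\}$ whose contribution is a two-level partition function with prescribed endpoints, one must condition on the crossings between levels $m+2$ and $m+1$ (entry into the slab) and between $m$ and $m-1$ (exit from it), not on the crossings between $m$ and $m+1$ --- the latter are internal to the slab, and fixing them splits the $f_m$, $f_{m+1}$ dependence across two factors, so Step 2 could not be applied to a single factor. Since your next sentence describes the correct factorization (outer factors depending only on $f_j$ with $j<m$ or $j>m+1$, slab handled by Step 2 with the crossing times as endpoints), this is an off-by-one slip rather than a conceptual gap; with that correction, and the boundary conventions $W=U$ at $m=n-1$ and $Z=V$ at $m=1$, your argument coincides with the paper's proof.
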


In contrast to discrete case Theorem \ref{thm:dinv}, we do not need to shift the starting points $U$ here.

\begin{proof}
The proof of Theorem \ref{thm:sdinv} follows the same three step program as we used  in proving Theorem \ref{thm:dinv} (and as used to prove \cite[Proposition 4.1]{DOV}). This proof is much closer to that of  \cite[Proposition 4.1]{DOV}. It is only in the first step that there is any real deviation.

\noindent {\bf Step 1 ($n=2$ and $k=1$ case of \eqref{eq:sdinv}).} In this case $f=(f_1,f_2)$, $U=(u,2)$ and $V=(v,1)$. We seek to prove \eqref{eq:sdinv}, which now reads (writing $\Tau$ in place of $\Tau_1$)
\begin{align}
f\big[(u,2)\to (v,1)\big] = \big(\Tau f\big) \big[(u,2)\to (v,1)\big].
\end{align}
From the definition of $\Tau$,
\begin{align}
\big(\Tau f\big)_1(t) = f_1(t) + \log \int_0^t e^{f_2(s)-f_1(s)}ds,\qquad
\big(\Tau f\big)_2(t) = f_2(t) - \log \int_0^t e^{f_2(s)-f_1(s)}ds.
\end{align}
Using this we may rewrite \eqref{eq:sdinv} as the identity
\begin{align}\label{eq:posDOV}
s(u,v)-s(0,v)-s(0,u) = \log\Big(\int_u^v e^{f_2(t)-f_1(t)-2s(0,t)}dt\Big),
\end{align}
where we have defined
\begin{align}
s(x,y) := \log \int_x^y e^{f_2(s)-f_1(s)}ds.
\end{align}
Denoting $g(s)=f_2(s)-f_1(s)$, we can rewrite the integral on the right-hand side of \eqref{eq:posDOV} as
\begin{align}\label{eq:rhsman}
\int_u^v e^{f_2(t)-f_1(t)-2s(0,t)}dt = \int_u^v \frac{e^{g(t)}dt}{\Big(\int_0^t e^{g(s)}ds\Big)^2} =\frac{-1}{\int_0^t e^{g(s)}ds}\Big\vert_u^v = \frac{1}{\int_0^u e^{g(s)}ds}-\frac{1}{\int_0^v e^{g(s)}ds}.\quad
\end{align}
The first equality is simply a rewriting via $g$, the second and third equalities evaluate the integration. It remains to match the result of the above calculation with the exponentiated left-hand side of \eqref{eq:posDOV} which is evaluated as
\begin{align}
\exp\Big(s(u,v)-s(0,v)-s(0,u)\Big) = \frac{\int_{u}^v e^{g(s)}ds}{\int_{0}^v e^{g(s)}ds \int_{0}^u e^{g(s)}ds}.
\end{align}
Since $u<v$, the right-hand side matches the right-hand side of \eqref{eq:rhsman} completing Step 1.
\smallskip

\noindent {\bf Step 2 ($n=2$ and $k\geq 1$ case of \eqref{eq:sdinv}).} Now $f=(f_1,f_2)$, $U=\big\{(u_i,2)\big\}_{i\in \llbracket 1,k\rrbracket}$ and $V=\big\{(v_i,1)\big\}_{i\in \llbracket 1,k\rrbracket}$. This step is shown along the same lines as  \cite[Lemma 4.3]{DOV}. In order that $U$ and $V$ constitute an endpoint pair, it is necessary that the $u$'s and $v$'s are ordered so that connecting them with pairs of non-intersecting paths is possible. The implies that $v_i\leq u_{i+2}$ for $i\in \llbracket 1,k-2\rrbracket$. Define $(z_1,\ldots, z_{2k})$ to be the ordering of the union of $\{u_1,\ldots, u_k\}$ and $\{v_1,\ldots, v_k\}$. We say that the interval $(z_i,z_{i+1}]$ is of ``type 2'' if $z_i=u_{j+1}$ and $z_{i+1}=v_{j}$ for some $j\in \llbracket 1,k-1\rrbracket$.
We say that the interval $(z_i,z_{i+1}]$ is of ``type 1'' if either of the following holds (we temporarily employ the notational convention that $v_0=0$ and $u_{k+1}=\infty$): $z_i=u_j$, $z_{i+1}=u_{j+1}$ and $v_{j-1}<u_j<u_{j+1}<v_{j}$ for some $j\in \llbracket 1,k-1\rrbracket$; or $z_i=u_j$, $z_{i+1}=v_{j}$ and $v_{j-1}<u_j<v_{j}<u_{j+1}$ for some $j\in \llbracket 1,k\rrbracket$.
Finally, we say that the interval $(z_i,z_{i+1}]$ is of ``type 0'' if $z_{i}=v_j$ and $z_{i+1}=u_{j+1}$ for some $j\in \llbracket 1,k-1\rrbracket$.
For $r\in\{0,1,2\}$, if the interval $(z_i,z_{i+1}]$ is of type $r$ this means that for all paths $\pi\in U\to V$, $\pi$ necessarily has exactly $r$ paths traversing that interval. See Figure \ref{fig:Lemma43} for an example.
\begin{figure}[t]
  \captionsetup{width=.8\linewidth}
  \includegraphics[width=3in]{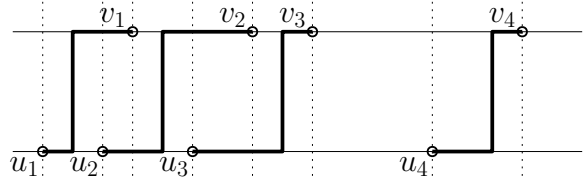}
  \caption{The interval $(v_3,x_4]$ is type 0;  $(u_1,u_2]$, $(v_1,u_3]$, $(v_2,v_3]$ and $(u_4,v_4]$ are type 1; and  $(u_2,v_1]$ and $(u_3,v_2]$ are type $2$. A multipath $\pi\in U\to V$ is shown and the number of paths matches the type of each interval.}
  \label{fig:Lemma43}
\end{figure}

The value of the above partitioning of $(u_1,\ldots, v_n]$ into the $z$ intervals is that on the boundary of each interval, every $\pi\in U\to V$ must take the same values. Thus, the integral which defines $f\big[U\to Y\big]$ factorizes into a product of integrals over each of the $z$ intervals. Consequently, the produces the following decomposition:
\begin{align}
f\big[U\to Y\big] &= \sum_{i=1}^{2k-1} f\Big[\big((z_i,2),(z_i,1)\big)\to \big((z_{i+1},2),(z_{i+1},1)\big) \Big] \mathbf{1}\big\{(z_i,z_{i+1}]\textrm{ is type 2}\big\}  \\
&+\sum_{i=1}^{2k-1} f\big[(z_i,2)\to (z_{i+1},1)\big] \mathbf{1}\big\{(z_i,z_{i+1}]\textrm{ is type 1}\big\}.
\end{align}
Type 0 intervals do not contribute to this sum.
By Step 1, we may replace $f\big[(z_i,2)\to (z_{i+1},1)\big]$ above by $\big(\mathcal{W} f\big)\big[(z_i,2)\to (z_{i+1},1)\big]$ without changing the value. As follows directly from definitions, for the type 2 terms we may replace $f$ by $\mathcal{W} f$ without changing the value. From these two replacements we conclude that $f\big[U\to Y\big] = \big(\mathcal{W} f\big)\big[U\to Y\big]$, completing Step 2.
\smallskip

\noindent {\bf Step 3 ($n\geq 2$ and $k\geq 1$ case of \eqref{eq:sdinv}).} Now $f=(f_1,\ldots, f_n)$, $U=\big\{(u_i,2)\big\}_{i\in \llbracket 1,k\rrbracket}$ and $V=\big\{(v_i,1)\big\}_{i\in \llbracket 1,k\rrbracket}$.
%
%
We claim that for all $m\in \llbracket 1,n-1\rrbracket$,
\begin{align}\label{eq.Taum}
f\big[U\to V\big]= \big(\Tau_m f\big) \big[U\to V\big].
\end{align}
Observe that since $\mathcal{W}$ is written as a composition of $\Tau_m$ operators, applying \eqref{eq.Taum} repeatedly for various values of $m$ yields the desired result \eqref{eq:sdinv}. Note that it is apparent from \eqref{eq.Taum} that the order in which we apply the $\Tau_m$ does not matter. This is different than in the discrete case, and is related to the braid relations discussed in Section \ref{sec:braid}.

To prove \eqref{eq.Taum}, we utilize the following decomposition:
\begin{align}\label{eq.decomptwice}
f\big[U\to V\big] = \log\int_{W,Z} e^{f[U\to W^+]+ f[W \to Z]+ f[Z^- \to V]}dWdZ.
\end{align}
In the above equation, we have taken $W=(w_i,m+1)_{i\in \llbracket 1,k\rrbracket}$, $W^+=(w_i,m+2)_{i\in \llbracket 1,k\rrbracket}$, $Z=(z_i,m)_{i\in \llbracket 1,k\rrbracket}$, and $Z^-=(z_i,m-1)_{i\in \llbracket 1,k\rrbracket}$, the integration is over all $W$ and $Z$ for which $(U,W^+)$, $(W,Z)$, and $(Z^-,V)$ are all endpoint pairs, and the $dW$ and $dZ$ are Lebesgue measure on the $k$-tuples $(w_1,\ldots, w_k)$ and $(z_1,\ldots, z_k)$. The decomposition is valid as written as long as $m\in \llbracket 2, n-2\rrbracket$. To deal with the boundary cases $m=1$ and $m=n-1$ we need to modify the decomposition slightly.
For $m=1$, we set $Z=V$, and drop the integration in $dZ$ and the term $\big[Z^-\to V\big]$ from \eqref{eq.decomptwice}.
For $m=n-1$, we set $W=U$, and drop the integration in $dW$ and the term $\big[U\to W^+\big]$ from \eqref{eq.decomptwice}.
This decomposition is illustrated in Figure \ref{fig:Decomp}.

\begin{figure}[t]
  \captionsetup{width=.8\linewidth}
  \includegraphics[width=3in]{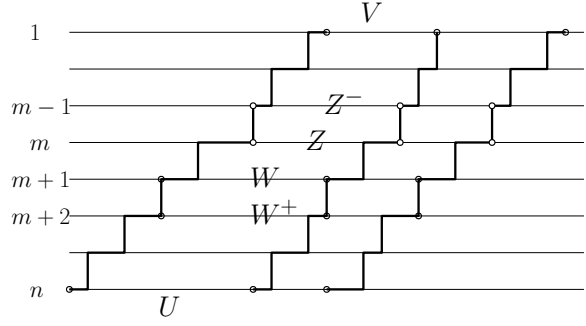}
  \caption{The decomposition (\ref{eq.decomptwice}). The circles on level $1$ correspond to points in $V$, on level $m-1$ to $Z^-$, on level $m$ to $Z$, on level $n+1$ to $W$ and on level $m+2$ to $W^+$. Notice that the set $Z^-$ sits immediately above $Z$ and the set $W^+$ sits immediately below $W$. The boundary cases, when $m=1$ or $m=n+1$ are not shown here.}
  \label{fig:Decomp}
\end{figure}

Now observe that inside the exponential in the integrand on the right-hand side of \eqref{eq.decomptwice}, the term $f[U\to W^+]$ depends only on $f_n,\ldots, f_{m+2}$, the term $f[W \to Z]$ depends only on $f_{m+1}$ and $f_m$ and the term $f[Z^- \to V]$ depends only on $f_{m-1},\ldots,f_1$. Since $\Tau_m$ acts as the identity on all functions in $f$ except $f_m$ and $f_{m+1}$, it follows immediately that $f[U\to W^+]=\big(\Tau_m f\big)[U\to W^+]$ and $f[Z^- \to V] =\big(\Tau_m f\big)[Z^- \to V]$. Meanwhile, using the result of Step 2 it follows that $f\big[W\to Z\big]= \big(\Tau_m f\big)\big[W\to Z\big]$. Putting this together we see that we can replace $f$ by $\big(\Tau_m f\big)$ in the right-hand side of \eqref{eq.decomptwice} without changing the value. Thus, using that decomposition backwards, now with $\big(\Tau_m f\big)$ in place of $f$, we conclude \eqref{eq.Taum}. This completes Step 3 and thus also completes the proof of the theorem.
\end{proof}

\subsection{Relation to geometric RSK correspondence}\label{sec:sdRSK}

As explained at the bottom of page 445 of \cite{ON12} (see also \cite{biane2005}), the operator $\mathcal{W}$ defined in \eqref{eq:sdW} is related to a semi-discrete geometric RSK correspondence. In particular, $\mathcal{W}$ admits the following path formula:
For all $k\in \llbracket 1,n\rrbracket$ and $t\in [0,\infty)$
$$
\sum_{i=1}^{k} \big(\mathcal{W}f\big)_i(t) = f\big[(0,n)^k\to (t,1)^k\big].
$$
\cite[Theorem 3.1 and Corollary 4.1]{ON12} shows that $f_i$ are taken to be independent Brownian motions, then $\big(\mathcal{W}f\big)(t)$  evolves in $t$ as diffusion with generator given by the Doob $h$-transform of the quantum Toda Hamiltonian with $h$ given by the class-one Whittaker function. Based on this result, \cite[Proposition 3.4]{CorHam16} showed that $\big(\mathcal{W}f\big)(t)$ enjoys the $H$-Brownian Gibbs property for an exponential interaction Hamiltonian $H(x)=e^x$.

\subsection{Zero-temperature limit}\label{sec:sdzero}
For $f=(f_1,\ldots, f_n)$ fixed define (note that the $\beta^{-1}$ superscript below is not an exponent, but rather a label for the variant of $f\big[U\to V\big]$ scaled as below)
\begin{equation*}
f\big[U\to V\big]^{\beta^{-1}}:=\beta^{-1} \big(\beta f)\big[U\to V\big].
\end{equation*}
Then, observe that by Laplace's method we can extract a {\it zero-temperature limit}
\begin{equation*}
\lim_{\beta\to \infty} f\big[U\to V\big]^{\beta^{-1}} = \sup_{\pi\in U\to V} \int df\circ \pi=:f\big[U\to V\big]^0.
\end{equation*}
We can also define zero-temperature limits of the operators $\Tau_m$ and $\mathcal{W}$ from Definition \ref{def:sdpit}.
Define $n-1$ operators $\Tau^0_1,\ldots, \Tau^0_n$ which act on $n$-tuples of functions $f_1,\ldots,f_n$ (which we write as $f(t) = \big(f_1(t),\ldots, f_n(t)\big)$ as
\begin{equation*}
\big(\Tau^0_i f\big)(t) := f(t) + \sup_{s\in [0,t]}\big(f_{i+1}(s)-f_i(s)\big)\big(e_i-e_{i+1}\big)
\end{equation*}
where $e_1,\ldots,e_n$ are basis vectors. Using the $\Tau^0$ operators, we define operators $\mathcal{S}^0_1,\ldots, \mathcal{S}^0_{n-1}$ and the operator $\mathcal{W}^0$ which act on functions $f$ as
\begin{equation}\label{eq:sdW}
\mathcal{S}^0_rf:=\Tau^0_{r}\Tau^0_{r+1}\cdots \Tau^0_{n-1}f,\qquad \textrm{and}\qquad
\mathcal{W}^0f :=\mathcal{S}^0_{n-1}\mathcal{S}^0_{n-2}\cdots \mathcal{S}^0_{1}f.
\end{equation}

The following is an immediate corollary of Theorem \ref{thm:sdinv} under the same zero-temperature limit. It was proved earlier as \cite[Proposition 4.1]{DOV}.

\begin{cor}\label{cor:sdzero}
For any endpoint pair $(U,V)$ and collection of $n$ functions $f=(f_1,\ldots, f_n)$ as in Definition \ref{def:sdpartf},
\begin{equation}\label{eq:sdinv}
f^0\big[U\to V\big] = \big(\mathcal{W}^0 f\big)\big[U\to V\big]^0.
\end{equation}
\end{cor}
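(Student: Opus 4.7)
The plan is to obtain Corollary \ref{cor:sdzero} as the direct $\beta \to \infty$ limit of Theorem \ref{thm:sdinv} applied to the rescaled potentials $\beta f := (\beta f_1,\ldots,\beta f_n)$. Substituting $\beta f$ for $f$ in \eqref{eq:sdinv} and dividing both sides by $\beta$, one obtains
\begin{equation*}
f[U\to V]^{\beta^{-1}} = g^\beta[U\to V]^{\beta^{-1}}, \qquad g^\beta := \beta^{-1}\mathcal{W}(\beta f),
\end{equation*}
after recalling the definition $f[U\to V]^{\beta^{-1}} := \beta^{-1}(\beta f)[U\to V]$ from Section \ref{sec:sdzero}. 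The left-hand side converges to $f[U\to V]^0$ by the Laplace-method observation already recorded in Section \ref{sec:sdzero}, so the task reduces to showing that the right-hand side converges to $(\mathcal{W}^0 f)[U\to V]^0$.

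The key lemma to prove is that $g^\beta \to \mathcal{W}^0 f$ coordinate-wise, uniformly on compact subsets of $\mathbb{R}_{\geq 0}$. I would establish this by induction on the number of elementary Pitman operators composing $\mathcal{W}$. The base case is the standard Laplace asymptotic
\begin{equation*}
\beta^{-1}\log\int_0^t e^{\beta(h_{i+1}(s)-h_i(s))}\,ds \longrightarrow \sup_{s\in[0,t]}\bigl(h_{i+1}(s)-h_i(s)\bigr)
\end{equation*}
(uniformly on compacts in $t$) for any continuous $h_i, h_{i+1}$, which immediately yields $\beta^{-1}\Tau_i(\beta h) \to \Tau^0_i h$ uniformly on compacts. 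The inductive step applies this same asymptotic to the output of the previous Pitman stage, using that each intermediate function $\mathcal{S}^0_{r}\cdots\mathcal{S}^0_1 f$ is continuous (running suprema of continuous functions being continuous), so the hypotheses of the base case remain valid at each application. A final invocation of Laplace's method applied to the finite-dimensional jump-time simplex integral defining $g^\beta[U\to V]^{\beta^{-1}}$ then gives the limit $(\mathcal{W}^0 f)[U\to V]^0$, completing the chain of equalities.

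The main obstacle is the bookkeeping of uniformity at each Pitman stage: one must verify that uniform-on-compacts convergence of the inputs passes through the $\log$-of-integral to uniform-on-compacts convergence of the outputs, and then once more through the $U\to V$ partition function in the final step. Both verifications are standard once one notes that the relevant integrands are dominated by exponentials of sup-norms over compact intervals, but they must be checked explicitly to justify commuting $\beta\to\infty$ with the composition $\mathcal{W}$. No combinatorial content new to the zero-temperature setting enters the argument; the proof is essentially the same justification implicitly underlying the identification in \cite[Proposition 4.1]{DOV}.
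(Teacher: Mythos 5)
Your proposal is correct and follows essentially the same route as the paper, which simply observes that Corollary \ref{cor:sdzero} is the immediate zero-temperature ($\beta\to\infty$) limit of Theorem \ref{thm:sdinv} via Laplace's method; your write-up just makes explicit the locally uniform convergence $\beta^{-1}\mathcal{W}(\beta f)\to\mathcal{W}^0 f$ and its passage through the jump-time simplex integral, which is exactly the bookkeeping the paper leaves implicit.
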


\section{Some remarks}\label{sec:ques}

\subsection{Braid relations}\label{sec:braid}
The semi-discrete Pitman transform operators $\Tau_1,\ldots, \Tau_{n-1}$ in Definition \ref{def:sdpit} satisfy braid relations \cite{ON12,biane2005} $\Tau_i\Tau_{i+1}\Tau_i = \Tau_{i+1}\Tau_{i}\Tau_{i+1}$. This does not seem to hold for the discrete Pitman transform operators from Definition \ref{defn:dpit}. For instance, when $n=3$, the operator $\mathcal{W} = \Tau_{2,2}\Tau_{2,1}\Tau_{1,2}$ but it is easy to see that it does not equal $\Tau_{1,2}\Tau_{2,1} \Tau_{2,2}$. In fact, the latter composition of operators is not even well-defined due to the changing of the domains under the application of the $\Tau$ operators. The question here is whether there is any sense in which the braid relations generalize to the discrete setting.
\subsection{Airy / KPZ sheet limits}\label{sec:airykpzsheet}

\cite[Proposition 4.1]{DOV} (Corollary \ref{cor:sdzero} herein) serves as the starting point in \cite{DOV} for the construction of the Airy sheet. When $k=1$ and the endpoint pair $(U,V)$ has $U=(u,n)$ and $V=(v,1)$, the result says that the point-to-point last passage time (i.e., $f^0\big[U\to V\big]$\big) for all choices of $u\leq v\mathbb{Z}_{\geq 1}$ can be recovered from the Pitman transform $\mathcal{W}^0 f$ through another last passage time calculation. \cite{DOV} calls $\mathcal{W}^0 f$ the {\it melon} of $f$. When the $f_i$ are Brownian motions, the melon $\mathcal{W}^0 f$ is an $n$-particle Dyson Brownian motion (DBM). This fact was shown in \cite{Bougerol2002, o'connell2002}, generalizing results of \cite{Bary,Gravner2001} which matched the one-time marginal of $\big(\mathcal{W}^0 f\big)_{1}$ to the top eigenvalue of an $n\times n$ GUE random matrix. It can also be seen as a limit of the results mentioned briefly, at the discrete geometric level, in Section \ref{sec:inv}. The Brownian choice of the $f_i$ is known as a {\it solvable} or {\it integrable} model due to its special structural properties such as just mentioned.

Much of  \cite{DOV} is  devoted to showing that under KPZ $n\to \infty$ scaling, the point-to-point last passage time fluctuation has a limit as a two parameter process defined by varying the starting and ending points (the limits of the $u$ and $v$). \cite[Theorem 1.3]{DOV}  claims that the limit, termed the {\it Airy sheet} from \cite{Corwin2015}, can be defined directly (not just as a limit) through the {\it Airy line ensemble} \cite{Prahofer2002,JohPNG,CH14} which, itself, arises as the $n\to \infty$ edge limit of the DBM $\mathcal{W}^0 f$.

\cite{DOV} relies on fine control over particle locations and gaps in DBM as $n\to \infty$ as well as its convergence to the Airy line ensemble. This control, some of which is in the companion paper \cite{DV}, comes from two main tools -- the {\it Brownian Gibbs property} \cite{CH14} and the {\it determinantal} structure \cite{AM} of the DBM and Airy line ensemble.

The question here is whether a similar analysis can be performed based on Theorem \ref{thm:dinv}. It seems most like that in the discrete zero-temperature setting. As described at the end of Section \ref{sec:dzero}, when the weights in last passage percolation are geometric (i.e., the solvable choice of weights), Dyson Brownian motion is replaced by another Markov process which has a discrete version of the Brownian Gibbs property and which is determinantal.

In the positive-temperature cases under the solvable choices of inverse-gamma distributed weights (in the discrete case, as in Section \ref{sec:inv}) or under Brownian $f_i$ (in the semi-discrete case, as in Section \ref{sec:sdRSK}), the $\mathcal{W}D$  and $\mathcal{W}f$ functions still enjoy relatively simple Gibbs properties (see Sections \ref{sec:inv} and \ref{sec:sdRSK} for references to precise statements). However, $\mathcal{W}D$  and $\mathcal{W}f$ are not known to be determinantal point processes. Instead, they relate to {\it Whittaker measures} \cite{ON12,COSZ,Borodin2014}. While there has been much success (for example \cite{Borodin2014,BCF12,BCR,krishnan2018}) in extracting asymptotics for the top-labeled particle of Whittaker measures, there are essentially no results providing control over the lower particles or particle gaps. This poses an immediate impediment to adapting the approach used in \cite{DV,DOV}.

It is known that under special {\it weak noise} scaling, the discrete \cite{alberts2014} and semi-discrete \cite{Nica} positive temperature directed polymer models have limits to the KPZ stochastic PDE. The Gibbsian line ensembles which arise from the solvable models likewise have scaling limits under the special weak noise scaling to the KPZ line ensemble \cite{CorHam16} (see also \cite{O’Connell2016,corwin2017,Wu}). Thus, it is natural to speculate that the KPZ sheet (that we define in a moment) can be recovered directly from the KPZ line ensemble and that this relationship is facilitated through taking a limit of Theorems \ref{thm:dinv} and \ref{thm:sdinv}. The KPZ sheet is easy to define as a function of space-time white noise $\xi$. For $x\in \mathbb{R}$ let $h(t;x,y)=\log Z(t;x,y)$ be the Hopf-Cole solution to the KPZ equation, where $Z$ solves the multiplicative stochastic heat equation
$$
\partial_t Z = \frac{1}{2}\partial_y^2 Z +\xi Z,\qquad h(0;x,y)=\delta_{x=y}.
$$
The KPZ sheet (for a fixed time $t$) is the two-parameter function $(x,y)\mapsto h(t;x,y)$.

\subsection{Other invariances}
Recently there have been two papers \cite{BGW, Petrovinv} which have studied other types of invariances of polymer models and stochastic vertex models. At face-value, those results do not seem directly related to those studied here, though it is enticing to search for a way to unify them. For instance, stochastic vertex models \cite{Corwin2016,Borodin2018} are known to generalize the solvable polymer models. It would be very interesting to find a lifting of our results into that setting. Indeed, there has been plenty of study in \cite{Pei,BorPetnear,MatPet,BufMat} of generalizations of the gRSK correspondence up the hierarchy of Macdonald processes to the $q$-Whittaker or Hall-Littlewood level (the gRSK relates to the Whittaker level) which relate to stochastic vertex models. However, at that level the correspondence is no-longer bijective and instead involves some randomization. This presents an immediate impediment to even formulating an analog of our main results and deserves further consideration.

\bibliographystyle{alpha}
\bibliography{Reference}

\end{document}